%\NeedsTeXFormat{LaTeX2e}
\documentclass[10pt,reqno]{amsart}

\usepackage{amscd}
\usepackage{amsthm}
\usepackage{amssymb}

\usepackage{enumerate,array,mathrsfs, a4wide}
\usepackage{graphicx}
\usepackage{sseq}
\usepackage{xcolor}

\usepackage{rotating}
\usepackage{bbm}
\usepackage[all,cmtip]{xy}

\usepackage[latin1]{inputenc}
\usepackage{dsfont}

\usepackage{hyperref}

\newtheorem{theorem}{Theorem}[section]

\newtheorem{lemma}[theorem]{Lemma}
\newtheorem{proposition}[theorem]{Proposition}
\newtheorem{corollary}[theorem]{Corollary}

\newtheorem*{theorem*}{Theorem}

\theoremstyle{definition}
\newtheorem{definition}[theorem]{Definition}

\newtheorem{example}[theorem]{Example}

\theoremstyle{remark}
\newtheorem{remark}[theorem]{Remark}

\newenvironment{setupS}

\numberwithin{equation}{section}

%%%%%%%%%%%%%%%%%%%%%%%%%%%%%%%%%%%%%%%%%%%%%%%%%%%%%%%%%%%%%%%%%%%%%
%   Debug Mode:                                                     %
%%%%%%%%%%%%%%%%%%%%%%%%%%%%%%%%%%%%%%%%%%%%%%%%%%%%%%%%%%%%%%%%%%%%%

%%%%%%%%%%%%%%%%%%%%%%%%%%%%%%%%%%%%%%%%%%%%%%%%%%%%%%%%%%%%%%%%%%%%%
%   Clean Mode:                                                     %
%%%%%%%%%%%%%%%%%%%%%%%%%%%%%%%%%%%%%%%%%%%%%%%%%%%%%%%%%%%%%%%%%%%%%
%\newcommand     {\comment}[1]   {}
%\newcommand{\mute}[2] {}
%\newcommand     {\printname}[1] {}
%%%%%%%%%%%%%%%%%%%%%%%%%%%%%%%%%%%%%%%%%%%%%%%%%%%%%%%%%%%%%%%%%%%%%

%\newcommand{\labell}[1] {\label{#1}\printname{#1}}
%\newcommand{\bibitemm}[2]{\bibitem[#1]{#2}\mf{#2}}

%    Absolute value notation
\newcommand{\abs}[1]{\lvert#1\rvert}

%    Blank box placeholder for figures (to avoid requiring any
%    particular graphics capabilities for printing this document).

\newcommand{\ow}{\omega}

\newcommand{\C}{{\mathbb{C}}}
\newcommand{\R}{{\mathbb{R}}}
\newcommand{\Q}{{\mathbb{Q}}}

\newcommand{\Z}{{\mathbb{Z}}}
\newcommand{\N}{{\mathbb{N}}}

\renewcommand{\epsilon}{\varepsilon}
\renewcommand{\phi}{\varphi}
\renewcommand{\theta}{\vartheta}

\newcommand{\lie}[1]{{\mathcal{L}_{#1}}}

\newcommand{\w}{\wedge}

%canonical 1-form

\DeclareMathOperator{\id}{Id}

\DeclareMathOperator{\OB}{OB}

\DeclareMathOperator{\sgn}{sgn}

\DeclareMathOperator{\Spec}{Spec}

\DeclareMathOperator{\open}{OB}

\begin{document}
\title[Open books for Boothby--Wang bundles]{Open books for Boothby--Wang bundles, fibered Dehn twists and the mean Euler characteristic}

\author[River Chiang]{River Chiang}
\address{Department of Mathematics, National Cheng Kung University,
Tainan 701, Taiwan
}
\email{riverch@mail.ncku.edu.tw}

\author[Fan Ding]{Fan Ding}
\address{School of Mathematical Sciences and LMAM, Peking University, Beijing 100871, P.~R.~China
}
\email{dingfan@math.pku.edu.cn}

\author[Otto van Koert]{Otto van Koert}

\address{
Department of Mathematical Sciences, Seoul National University\\
Building 27, room 402\\
San 56-1, Sillim-dong, Gwanak-gu, Seoul, South Korea\\
Postal code 151-747
}
\email{okoert@snu.ac.kr}

%    General info

\subjclass[2010]{Primary 53D35}

\keywords{Open books, fibered Dehn twists, orbifolds, equivariant symplectic homology, mean Euler characteristic}

\begin{abstract}
We examine open books with powers of fibered Dehn twists as monodromy. The resulting contact manifolds can be thought of as Boothby--Wang orbibundles over symplectic orbifolds. Using the mean Euler characteristic of equivariant symplectic homology we can distinguish these contact manifolds and hence show that some fibered Dehn twists are not symplectically isotopic to the identity relative to the boundary. This complements results of Biran and Giroux.
\end{abstract}

\maketitle

% =====================================================
\section{Introduction}
% =====================================================
Since Giroux established the correspondence between open books with symplectic monodromy and contact manifolds, there has been a lot of activity to investigate this relation further. 
In dimension 3, this approach has been particularly fruitful, since the requirement that the monodromy is a symplectomorphism imposes no real constraints; it is possible to use the wealth of knowledge about the mapping class groups of surfaces.
One can think of a mapping class group of a surface as being generated by Dehn twists along curves. 

For a general symplectic manifold, the symplectomorphism group is not understood very well. 
Nevertheless, let us mention here the result of Seidel, \cite{{Seidel:TS2}}, on the compactly supported symplectomorphism group of $T^*S^n$ with its canonical symplectic structure: for $n=2$, the generalized Dehn twists generate the group;
for $n>2$, Dehn twists form an infinite cyclic subgroup.  
The latter result can be recovered, through the Giroux correspondence, by considering the associated open books with page $T^*S^n$ and $N$-fold right-handed Dehn twist as monodromy. We shall denote these manifolds by $\open(T^*S^n,\tau^N)$.
In~\cite{vanKoert:brieskorn_openbook} it was shown that these contact manifolds are contactomorphic to Brieskorn manifolds, $\open(T^*S^n,\tau^N)\cong \Sigma(N,2,\ldots,2)$. 
The contact structures on these manifolds can be distinguished using the mean Euler characteristic of equivariant symplectic homology, see Section~\ref{sec:s1eq} for the definition of this notion. 
For nice contact manifolds, including these Brieskorn manifolds, one can compute the mean Euler characteristic completely in terms of Reeb orbit data; Floer theory is necessary but only to show invariance of this number. 
For the Brieskorn manifold $\Sigma(N,2,\ldots,2)$ of dimension $2n+1$ (with $n$ even and $N$ odd), the mean Euler characteristic is $\chi_m(\Sigma(N,2,\ldots,2)\,)=\frac{1}{2}\frac{nN+1}{(n-1)N+2}$, see for example~\cite{Ginzburg:homological_resonances} or~\cite{vanKoert:thesis}.
This is an injective function of $N$. It implies that all odd powers of Dehn twists $\tau^N$ are distinct, and so are all powers of Dehn twists $\tau^N$. 
Note that for $n$ odd, the Brieskorn manifolds $\Sigma(N, 2, \ldots,2)$ are all non-diffeomorphic.   

Furthermore, such Dehn twists can be constructed for any symplectic manifold containing a Lagrangian sphere. 
However, if a symplectic manifold does not contain Lagrangian spheres, there is no general procedure to construct symplectomorphisms that are not symplectically isotopic to the identity.

On the other hand, Biran and Giroux~\cite{Biran_Giroux:fibered_Dehn} considered the case of fibered Dehn twists, which can be constructed if the contact type boundary of a symplectic manifold admits a suitable $S^1$-action.
More precisely, consider a symplectic manifold $W$ with contact type hypersurface $P$ carrying a free $S^1$-action in the neighborhood $P\times [0,1]$ that preserves the contact form on $P$. 
Then one can define a right-handed fibered Dehn twist as a map of the form
\begin{align*}
\tau: P\times [0,1] & \longrightarrow  P\times [0,1] \\
(x,t) & \longmapsto  (x\cdot\left[ f(t) \,\mathrm{mod}\, 2\pi \right],t)
\end{align*}
by choosing a function $f: [0,1] \to \R$ such that $f$ equals $2\pi$ near $t=0$ and $0$ near $t=1$. 
The map $\tau$ is a symplectomorphism that is the identity near the boundary of $P\times [0,1]$.
This allows one to extend $\tau$ to a symplectomorphism of $W$. 

Biran and Giroux~\cite{Biran_Giroux:fibered_Dehn} showed that such fibered Dehn twists are often not symplectically isotopic to the identity.

\begin{theorem*}[Biran and Giroux]
	Let $(M^{2n},\omega)$ be an integral symplectic manifold with an adapted Donaldson hypersurface $H$ that is Poincar\'e dual to $[\omega]$. Consider $W=M-\nu(H)$, 
the complement of a tubular neighborhood of $H$.
Suppose that one of the following conditions hold
\begin{enumerate}
\item $\pi_2(M)=0$, 
\item $M$ is monotone and contains a simply connected Lagrangian such that
the minimal Chern number $c_M$ of $M$ satisfies $c_M \geq (n+2)/2$.
\end{enumerate}
Then a right-handed fibered Dehn twist $\tau$ on $W$ along the boundary $\partial W$ is not symplectically isotopic to the identity relative to the boundary. 
\end{theorem*}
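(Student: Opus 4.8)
The plan is to reduce the statement to a question about symplectic fillings of a Boothby--Wang bundle, and then to obstruct the existence of the relevant filling by Floer-theoretic means.

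First, I would set up the reduction. The boundary $P=\partial W$ is the unit normal circle bundle of $H$ in $M$, and since $H$ is Poincar\'e dual to $[\omega]$ this is exactly the prequantization (Boothby--Wang) circle bundle over $(H,\omega|_H)$, the $S^1$-action used to build $\tau$ being fiberwise rotation. The key structural input is the Boothby--Wang description of the open book (the same computation that in \cite{vanKoert:brieskorn_openbook} identifies $\open(T^*S^n,\tau^N)$ with a Brieskorn manifold): the contact open book $\open(W,\tau)$ with page $W$ and monodromy the right-handed fibered Dehn twist is contactomorphic to the Boothby--Wang contact manifold $Q$ over $(M,\omega)$, because $Q$ restricts to a trivial circle bundle over $W=M-\nu(H)$ (the class $[\omega]$ restricts to $0$ there, being supported near $H$) while the clutching over $H$ is precisely the fibered Dehn twist. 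On the other hand $W$, as the complement of a Donaldson hypersurface, carries a Weinstein structure (Giroux), so $\open(W,\id)=\partial(W\times D^2)$ bounds the Weinstein domain $\hat W:=W\times D^2$. Hence, if $\tau$ were symplectically isotopic to $\id$ relative to $\partial W$, the contact manifolds $\open(W,\tau)$ and $\open(W,\id)$ would be isotopic, so $Q$ would bound $\hat W$; everything below derives a contradiction from this.

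For case (1) I would argue with symplectic homology. The Weinstein domain $\hat W=W\times D^2$ has vanishing symplectic homology, by the K\"unneth formula for symplectic homology together with $SH_*(D^2)=0$. But if a contact form on $Q=\partial\hat W$ had \emph{no} contractible closed Reeb orbit, then the contractible part of $SH_*(\hat W)$ would be isomorphic to $H_*(\hat W,\partial\hat W)$, which is nonzero --- contradicting $SH_*(\hat W)=0$. Now when $\pi_2(M)=0$ the homotopy exact sequence of $S^1\to Q\to M$ shows that the fiber has infinite order in $\pi_1(Q)$, so no iterate of a fiber is contractible; since the Reeb orbits of the Boothby--Wang contact form are exactly the iterated fibers, that form has no contractible Reeb orbit, and case (1) is done.

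For case (2) the symplectic-homology argument fails, since contractible Reeb orbits can now occur, and one uses the simply connected Lagrangian $L\subset M$ instead. Because $L$ is simply connected its minimal Maslov number equals $2c_M\geq n+2>\dim L$, which (a) forces any Lagrangian isotopy of $L$ to be Hamiltonian, as $H^1(L;\R)=0$, and (b) rules out holomorphic disk and sphere bubbling in the relevant range of degrees, so that $HF(L,L)$ is defined and equals $H^*(L)$. If $\tau\simeq\id$ relative to $\partial W$, the isotopy extends by the identity over $\nu(H)$ to an isotopy in $\mathrm{Symp}(M,\omega)$ along which $L$ sweeps a Lagrangian isotopy from $L$ to $\tau(L)$; by (a) it is Hamiltonian, so $HF(L,\tau(L))\cong HF(L,L)=H^*(L)$. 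On the other hand $\tau$, being a fibered Dehn twist supported near the contact hypersurface $P$, fits into a Seidel-type long exact sequence relating $HF(L,\tau(L))$ with $HF(L,L)$ and a term built from the way $L$ crosses $P$; equivalently, in the geometric model $\tau(L)\cap L$ consists of the original intersection points together with an extra family created by the wrapping, and (b) prevents the Floer differential from cancelling that family. Either way the rank of $HF(L,\tau(L))$ exceeds $\dim H^*(L)$, a contradiction. The main obstacle is exactly this last step: constructing the fibered-Dehn-twist exact sequence (or carrying out the direct intersection-theoretic computation of $HF(L,\tau(L))$) and verifying that the hypothesis $c_M\geq(n+2)/2$ is precisely the threshold past which the relevant bubbling is excluded; this bubbling analysis, together with the bookkeeping of the extra generators contributed by $P$, is where essentially all the difficulty lies. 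The reduction and case (1) are comparatively formal once the Boothby--Wang identification of $\open(W,\tau)$ is in hand.
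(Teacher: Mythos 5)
Your reduction and your treatment of case (1) follow essentially the same route as the paper's own proof of that case (Theorem~\ref{thm:pi_2_fibered_twist_not_isotopic_id}): identify $\open(W,\tau)$ with the Boothby--Wang bundle over $M$, note that $\pi_2(M)=0$ forces every iterated fiber to be non-contractible, and play this off against the subcritical filling $W\times D^2$ that would exist if $\tau$ were isotopic to the identity rel boundary. Your mechanism for producing the contradiction ($SH_*(W\times D^2)=0$ plus the Viterbo sequence) is a legitimate substitute for the paper's citation of Frauenfelder--Schlenk, but as written it has a gap: the vanishing/long-exact-sequence argument produces a Reeb orbit that is contractible \emph{in the filling} $W\times D^2$, whereas you only rule out orbits contractible \emph{in} $Q$. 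You need $\pi_1(Q)\to\pi_1(W\times D^2)$ to be injective; this is true and is exactly the point the paper makes, because the subcritical domain is obtained from a collar of its boundary by attaching handles of index at least $3$, so the inclusion of the boundary is a $\pi_1$-isomorphism. With that sentence added, case (1) is fine.

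Case (2) is where the proposal breaks down, and note that the paper itself does not prove this case (it only sketches Biran--Giroux's displaceability argument), so you are on your own here. Your plan is to extend the rel-boundary isotopy over $\nu(H)$, deduce $\mathrm{HF}(L,\tau(L))\cong \mathrm{HF}(L,L)$ in $M$, and contradict this by an unconditional computation (a Seidel-type exact sequence or an intersection count) showing $\mathrm{HF}(L,\tau(L))$ is strictly larger. This cannot work: once you pass to the closed manifold $M$, the fibered Dehn twist extended by the identity over $\nu(H)$ is \emph{always} symplectically (indeed Hamiltonian) isotopic to the identity, because the circle action on $P=\partial\nu(H)$ extends to the Hamiltonian rotation of the disk bundle $\nu(H)$ and the twisting profile can be unwound through it; the rel-boundary condition, which is the entire content of the theorem, is lost in this extension. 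Consequently $\tau(L)$ is Hamiltonian isotopic to $L$ in $M$ unconditionally (if $L$ misses the support of $\tau$ one even has $\tau(L)=L$), so $\mathrm{HF}(L,\tau(L))\cong \mathrm{HF}(L,L)$ holds with no hypothesis on $\tau$, and no exact sequence can produce extra rank --- the ``extra family created by the wrapping'' must cancel. The step you flag as the main difficulty is therefore not merely hard but impossible as formulated. The actual Biran--Giroux mechanism is different: triviality of the monodromy makes $\open(W,\id)=\partial(W\times D^2)$ subcritically Stein fillable, compact subsets of subcritical Stein manifolds are Hamiltonian displaceable, hence the Floer homology of the relevant monotone Lagrangian must vanish; the hypotheses ($L$ simply connected, $c_M\geq (n+2)/2$, so minimal Maslov number at least $n+2$) guarantee instead that this Floer homology is isomorphic to $H_*(L)\neq 0$, giving the contradiction. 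To salvage case (2) you would need to work with a Lagrangian seen by the subcritical filling (or its completion), not with $\mathrm{HF}(L,\tau(L))$ inside the closed manifold $M$.
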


Their proof used Lagrangian Floer homology. 
First they observed that certain Boothby--Wang bundles over symplectic manifolds carry a supporting open book whose monodromy is a fibered Dehn twist.
By a result of Cieliebak, see Theorem~\ref{thm:subcritical_open_book}, contact open books with trivial monodromy are always subcritically Stein fillable.
On the other hand, compact sets in subcritical Stein manifolds are Hamiltonian displaceable, so Lagrangian Floer homology must be trivial. The right sets of assumptions
guarantee nontrivial Lagrangian Floer homology, so one can deduce in this way that these fibered Dehn twists are not symplectically isotopic to the identity.

We shall also address this question, but use a different approach.
The main idea is that the Reeb dynamics in subcritical manifolds are fairly well understood, and this gives similar but different conditions for triviality of fibered 
Dehn twists. We shall also consider powers of fibered Dehn twists. 
Let us begin by stating the following result (Theorem~\ref{thm:Boothby_Wang_orbifold_open_book}).
\begin{theorem*}
\label{thm:OB_Boothby-Wang}
Let $(M,\omega)$ be an integral symplectic manifold with an adapted Donaldson hypersurface $H$ that is Poincar\'e dual to $[\omega]$. Consider $W=M-\nu(H)$, 
the complement of a tubular neighborhood of $H$. 
Let $\tau$ denote a right-handed fibered Dehn twist on $W$ along the boundary $\partial W$. Then for any positive integer $N$, $\open(W,\tau^N)$
carries the structure of a Boothby--Wang orbibundle over a symplectic orbifold.
\end{theorem*}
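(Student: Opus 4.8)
The plan is to construct the Boothby--Wang orbibundle structure on $\open(W,\tau^N)$ explicitly, by identifying the base orbifold and the orbifold circle action, and then matching these data with the abstract open book.

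\medskip

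\textbf{Step 1: Build the base symplectic orbifold.} Starting from the integral symplectic manifold $(M,\omega)$ with the Donaldson hypersurface $H$ Poincar\'e dual to $[\omega]$, I would take a root construction / orbifold blow-up along $H$: replace $M$ by the orbifold $M_N$ which is $M$ away from $H$ but has $\Z/N$ transverse isotropy along $H$ (equivalently, the quotient of an $N$-fold fiberwise branched cover of the normal disk bundle of $H$). Concretely, one can present $M_N$ as the symplectic reduction associated with the $N$-th root of the line bundle $\mathcal{O}_M(H)$. The hypersurface $H$ sits inside $M_N$ as a symplectic suborbifold of codimension two, and $W=M-\nu(H)$ is symplectomorphic to the complement of a neighborhood of $H$ in $M_N$. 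This $M_N$ is the base of the desired orbibundle, and the orbifold integral symplectic form on it is $\omega$ rescaled so that $[\omega_N]$ is an integral orbifold cohomology class — the relevant prequantization datum.

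\medskip

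\textbf{Step 2: Recognize the prequantization orbibundle.} Over the symplectic orbifold $(M_N,\omega_N)$ one forms the Boothby--Wang orbibundle $Y_N \to M_N$ with Euler class $[\omega_N]$; the total space carries a contact form whose Reeb flow generates the fiberwise $S^1$-action, with orbifold points of $M_N$ along $H$ giving short Reeb orbits of period $1/N$. The key identification to carry out is a contactomorphism $\open(W,\tau^N)\cong Y_N$. This is the heart of the argument: I would exhibit the open book structure on $Y_N$ directly. The complement of a neighborhood of $H$ in $M_N$, namely $W$, serves as the page; the monodromy is read off from the holonomy of the connection along a loop encircling $H$, which by construction of the $\Z/N$ isotropy is exactly the $N$-th power of the fibered Dehn twist $\tau$ supported near $\partial W$. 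For $N=1$ this recovers the classical Boothby--Wang bundle of $M$ with its open book with page $W$ and monodromy $\tau$, which is the case Biran--Giroux used; the general case is the orbifold analogue.

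\medskip

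\textbf{Step 3: Verify the open book data match.} Finally I would check that the open book $\open(Y_N)$ produced by the Boothby--Wang/Lefschetz-type fibration on $M_N$ (fibering over a disk transverse to $H$) has binding the circle bundle over $H$ inside $Y_N$, pages $W$, and monodromy $\tau^N$. This amounts to a local computation in the normal orbifold neighborhood of $H$: there the orbibundle $Y_N$ looks like (a $\Z/N$-quotient of) $(\text{disk bundle over }H)\times S^1$, and the return map of the open book fibration picks up the $2\pi/N \cdot N = 2\pi$ rotation, i.e. the full right-handed fibered Dehn twist raised to the $N$-th power, supported in the collar $P\times[0,1]$ with $P$ the $S^1$-bundle over $H$. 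The model calculation of Giroux's correspondence then upgrades this to the statement that $\open(W,\tau^N)$ is contactomorphic to the Boothby--Wang orbibundle, completing the proof.

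\medskip

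\textbf{Expected main obstacle.} The delicate point is Step 2: making the identification $\open(W,\tau^N)\cong Y_N$ rigorous requires care with the orbifold structure near $H$ — one must check that the abstract contact open book construction (gluing a neighborhood of the binding to the mapping torus of $\tau^N$) produces precisely the contact form whose Reeb flow is the Seifert $S^1$-action of the orbibundle, including the correct isotropy $\Z/N$ along the binding. Verifying that the fibered Dehn twist's support function $f$ can be chosen so that the resulting contact structure is the Boothby--Wang one (and not merely contactomorphic after a non-explicit deformation) is where the real work lies; everything else is a matter of assembling standard constructions (root stacks, orbifold prequantization, Giroux's open book recipe).
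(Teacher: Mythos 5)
Your outline is correct in substance, but it runs the construction in the opposite direction from the paper and leans on orbifold machinery that the paper deliberately avoids. You first build the base orbifold $M_N$ (the root construction with $\Z_N$ transverse isotropy along $H$), then invoke orbifold prequantization to produce $Y_N$, and finally look for an open book on $Y_N$ with page $W$ and monodromy $\tau^N$; the paper instead glues three explicit contact pieces (the binding model $P\times {\mathring D^2}$ with form $\tfrac1N(1-r^2)\theta+r^2\,d\phi$, a middle piece $P\times I\times S^1$ with $d\phi+\rho_N(t)\theta$, and $W\times S^1$ with $d\phi+\tfrac1N\lambda$), checks that the glued Reeb flow is periodic with $\Z_N$ isotropy exactly along the binding, and obtains the base orbifold a posteriori as the Reeb quotient $(Y,d\alpha)/S^1$ — so no existence statement for orbifold Boothby--Wang bundles is needed as input, and the adaptedness of $H$ enters exactly where you use it, namely to know $(M,\omega)$ is reassembled from these three pieces. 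The point you flag as the main obstacle is indeed where all the work lies, and your proposed mechanism — reading the monodromy off as ``the holonomy of the connection along a loop encircling $H$'' — is only a heuristic: holonomy around a small loop is a single element of $S^1$, whereas what must be shown is that the return map of the Reeb flow, written rel boundary on the compact page $W$, is a fibered twist whose total angle in the fibers of $P\to H$ is $2\pi N$ (your ``$2\pi/N\cdot N=2\pi$'' is a statement about the return time, not the fiber rotation, which is $N$ full turns). The paper pins this down by solving explicitly for the twisting profile, $f(t)=2\pi\bigl(\rho(t)-\rho'(t)\bigr)/\rho(t)^2$ with $f_N=Nf$ for the $N$-th power, so that the mapping-torus form $d\phi+e^{t-C}\theta$ pulls back to a multiple of the connection form, and then repairs the remaining mismatch near the binding by a convex interpolation between $h_1^0\theta+h_2^0\,d\phi$ and $h_1^1\theta+h_2^1\,d\phi$ together with Gray stability; this also answers your worry about a ``non-explicit deformation'': the theorem only asserts that the contact manifold $\OB(W,\tau^N)$ carries such a structure, so a contactomorphism obtained via Gray stability is exactly what is needed. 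Your route would work, but completing it requires reproducing essentially these two computations; the paper's ordering packages them so that the orbifold structure comes for free from the periodicity of the Reeb flow.
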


Note that the special case of $k=1$ recovers the open book decomposition of Boothby--Wang bundles considered by Biran and Giroux.

The methods we shall use to distinguish fibered Dehn twists from the identity are the following. 
First of all, there always exist contractible Reeb orbits in subcritically fillable contact manifolds. This is not always the case for the above class of Boothby--Wang orbibundles.
Secondly, the mean index, i.e.~the ``average'' Maslov index of periodic Reeb orbits, is positive in subcritically fillable contact manifolds if the first Chern class is trivial. Many Boothby--Wang orbibundles have negative mean index though.  
A related statement was made by Oancea and Viterbo, \cite{OV}, Proposition~5.14. 
Thirdly, we can use the mean Euler characteristic of equivariant symplectic homology as mentioned earlier: for exactly fillable contact manifolds this number can be thought of as a contact invariant. Moreover, this number has to be a half-integer for subcritically fillable contact manifolds.

The main application of our methods is the following result (Theorem~\ref{thm:distinguishing_powers}), while we also provide a short proof of the first case of the above-mentioned theorem of Biran and Giroux. See Theorem \ref{thm:pi_2_fibered_twist_not_isotopic_id}.

\begin{theorem*}
Let $(M^{2n-2},\omega)$ be a simply connected symplectic manifold of dimension at least $6$ such that $[\omega]\in H^2(M;\Z)$ is a primitive element.
Suppose that $c_1(M)=c[\omega]$, and let $H$ be an adapted Donaldson hypersurface that is Poincar\'e dual to $k[\omega]$ for some positive integer $k$.
Let $\tau$ denote a right-handed fibered Dehn twist along the boundary of $M-\nu(H)$.
If $\tau^N$ is symplectically isotopic to the identity relative to the boundary for a positive integer $N$, then one of the following conditions must hold,
\begin{itemize}
\item $c\geq k$, $k$ does not divide $N$, and $\chi(H)=\chi(M)=0$.
\item $c=k$, $k$ divides $N$, and $\chi(H)=0$.
\item $c> k$, $k$ divides $N$, and $\left((c-k)k+1 \right) \chi(H)=(c-k)k\chi(M)$.
\end{itemize} 
\end{theorem*}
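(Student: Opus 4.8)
The plan is to exploit Theorem~\ref{thm:Boothby_Wang_orbifold_open_book}, which presents the contact manifold $Y:=\open(W,\tau^N)$, with $W=M-\nu(H)$, as a Boothby--Wang orbibundle $\pi\colon Y\to\hat M$ over a symplectic orbifold $\hat M$ whose underlying space is $M$, whose singular locus is $H$ with transverse cyclic isotropy $\Z/kN$, and whose orbifold symplectic form represents a class determined by $[\omega]$, $k$ and $N$. I would then argue as follows. If $\tau^N$ is symplectically isotopic to the identity relative to $\partial W$, then $Y$ is contactomorphic to $\open(W,\id)$, which by Theorem~\ref{thm:subcritical_open_book} is subcritically Stein fillable; hence $Y$ must satisfy the three constraints recalled in the introduction: every supporting contact form carries a contractible closed Reeb orbit; if $c_1(\xi_Y)$ is rationally trivial then every closed Reeb orbit has strictly positive mean index; and the mean Euler characteristic $\chi_m(Y)$ is a half-integer. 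The whole proof then reduces to computing these quantities for $Y$ and reading off which of the three alternatives is compatible with them.

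To carry this out I would fix the Boothby--Wang contact form on $Y$ associated with a connection on $\pi$ whose curvature is the orbifold symplectic form; its Reeb flow is the orbibundle $S^1$-action, which is of Morse--Bott type with exactly two families of closed orbits --- the principal fibre over $M\setminus H$ and its iterates, sweeping out a copy of $M\setminus H$, and the exceptional fibre over $H$ and its iterates, sweeping out a copy of $H$. Using $\xi_Y\cong\pi^*T\hat M$ and the Gysin sequence of $\pi$ one finds that $c_1(\xi_Y)$ is torsion, so the mean-index constraint always applies; using the orbifold adjunction formula $c_1(H)=(c-k)[\omega]|_H$ and the standard index formula for Boothby--Wang Reeb orbits, the principal fibre has mean index $2(c-k)$ up to a positive correction depending on $N$, while the exceptional fibre and its iterates contribute indices in which the integers $(c-k)k$ and $(c-k)k+1$ appear as denominators. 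Finally I would compute $\pi_1(Y)$: since $M$ is simply connected, $H$ is Poincar\'e dual to $k[\omega]$ and $[\omega]$ is primitive, one has $H_1(W)=\Z/k$ --- this is where $\dim M\ge 6$ enters --- and a van Kampen computation for the open book then identifies $\pi_1(Y)$ and determines which orbits, and which of their iterates, are null-homotopic; this is the step where the divisibility of $N$ by $k$ becomes decisive.

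With this model in hand I would extract the trichotomy by playing the three constraints against the Reeb spectrum. Strict positivity of the mean index (valid since $c_1(\xi_Y)$ is torsion) forces $c\ge k$ and singles out the borderline case $c=k$, where the principal fibre's mean index is only just positive, from $c>k$. The requirement that some closed Reeb orbit be contractible, together with the computation of $\pi_1(Y)$, forces the dichotomy between $k\mid N$, in which case the principal fibre already bounds, and $k\nmid N$, in which case only orbits of the $H$-family can bound; combined with half-integrality of $\chi_m(Y)$ the latter situation forces $\chi(H)=0$ and then $\chi(M)=0$, which is the first alternative. When $k\mid N$ I would compute $\chi_m(Y)$ from the Morse--Bott data as an explicit rational function of $\chi(M),\chi(H),c,k,N$; imposing $\chi_m(Y)\in\tfrac12\Z$ then yields $\chi(H)=0$ when $c=k$, the second alternative, and the relation $\bigl((c-k)k+1\bigr)\chi(H)=(c-k)k\,\chi(M)$ when $c>k$, the third alternative. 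Assembling these conclusions gives exactly the three stated cases.

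The hard part will be the explicit index and homology bookkeeping in the orbifold setting underlying the second paragraph: one must track the Robbin--Salamon grading of every iterate of the principal and exceptional fibres, including the good/bad dichotomy and the contributions of the degenerate normal directions, pin down $\pi_1(Y)$ together with the homotopy classes of all these orbits, and assemble the positive $S^1$-equivariant symplectic homology --- equivalently $\chi_m(Y)$ --- as a closed-form function of $\chi(M),\chi(H),c,k,N$. Getting the denominators $c-k$, $(c-k)k$, $(c-k)k+1$ and the Euler-characteristic coefficients exactly right, so that half-integrality reproduces precisely the relations $\chi(H)=0$, $\chi(M)=0$ and $\bigl((c-k)k+1\bigr)\chi(H)=(c-k)k\,\chi(M)$, is where essentially all of the work lies; once the model, its Reeb spectrum and the three constraints are set up, the deductions are formal.
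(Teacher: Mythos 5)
Your overall strategy --- identify $\open(W,\tau^N)$ with a Boothby--Wang orbibundle, note that an isotopy of $\tau^N$ to the identity would make it subcritically Stein fillable by Theorem~\ref{thm:subcritical_open_book}, and then play the Reeb dynamics of the orbibundle against the constraints this imposes --- is indeed the paper's strategy, but three of your key mechanisms do not work as described. First, half-integrality of $\chi_m$ is far too weak to produce the stated relations. When $k\mid N$ the Morse--Bott computation gives $\chi_m=(-1)^{n+1}\bigl((\tfrac{N}{k}-k)\chi(H)+k\chi(M)\bigr)/|\mu_P|$ with $\mu_P=2\bigl(N(c-k)/k+1\bigr)$; for $c=k$ this is automatically a half-integer, so the condition $\chi_m\in\tfrac12\Z$ yields nothing, and for $c>k$ it yields only a divisibility condition, never the exact linear relation $\bigl((c-k)k+1\bigr)\chi(H)=(c-k)k\chi(M)$. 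What the paper actually uses is the exact value of the mean Euler characteristic of a subcritical filling, $\chi_m(\widetilde W\times D^2)=(-1)^{n+1}\chi(\widetilde W)/2=(-1)^{n+1}k\bigl(\chi(M)-\chi(H)\bigr)/2$ (Proposition~\ref{proposition:mean_euler_subcritical}), equated with the Reeb-side computation of Lemma~\ref{lemma:covering}, carried out on the universal cover so that one stays in the class of contractible orbits. Second, your dichotomy ``if $k\nmid N$ only the $H$-family bounds'' is false: $\pi_1(P_N)\cong\Z_k$ is finite, so suitable covers of both the exceptional and the principal fibers are always contractible, and the contractible-orbit criterion plays no role in this theorem (it is only used in the $\pi_2(M)=0$ case). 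The first bullet comes instead from applying the $\chi_m$-comparison to \emph{all} powers $\tau^{Nm}$, $m\in\N$, which are also isotopic to the identity, and exploiting how the terms $k^2-\gcd(Nm,k)^2$ behave as $m$ varies; a single value of $N$ gives one linear equation and cannot force both $\chi(H)=0$ and $\chi(M)=0$.

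Third, your exclusion of $c<k$ rests on the claim that in a subcritically fillable contact manifold with torsion $c_1$ every closed Reeb orbit has positive mean index. That is not a theorem; the correct statement is homological ($SH^{S^1,+}$ of the filling is index-positive with generators in arbitrarily large degrees), and even this does not rule out $c<k$ for a fixed $N$ with $N(c-k)+k>0$, since then the orbibundle is still index-positive. Here too one needs the iteration trick: if $\tau^N$ were isotopic to the identity then so would be $\tau^{Nm}$, and for $m$ large the Maslov indices $2\bigl(Nm(c-k)+k\bigr)$ of covers of principal orbits all become negative, so the Morse--Bott spectral sequence is supported in degrees bounded from above, contradicting the unbounded degrees of the subcritical filling (Lemma~\ref{lemma:negative_c_BW_orbi}). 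Two smaller points: the transverse isotropy along $H$ in $M_N$ is $\Z_N$, not $\Z_{kN}$, and since the equivariant theory is set up for contractible loops you must either pass to the universal cover (as the paper does) or otherwise justify the computation in the presence of the finite fundamental group.
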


In many cases this means that all positive powers of fibered Dehn twists are distinct. 
We illustrate this with examples of certain smooth complete intersections, see 
Examples \ref{example:CP-H_d} and \ref{example:degree_d_hypersurface}.

Finally, we want mention an explicit formula for the mean Euler characteristic, which might be of independent interest. With the notation from our first theorem, put $P_N=\OB(W^{2n-2},\tau^N)$. 
Then we have the following formula in case that $k=1$,
$$
\chi_m(P_N)=(-1)^{n+1}\frac{(N-1)\chi(H)+\chi(M)}{2|(c-1)N+1|}.
$$
Observing that $M_N:=P_N/S^1$ carries the structure of a symplectic orbifold, we can also write 
$$
\chi_m(P_N)=(-1)^{n+1}\frac{\chi(\vert \w M_N\vert )}{2N|\langle c_1^{orb}(M_N),[B_N] \rangle|},
$$
where
\begin{itemize}
\item $N$ is the total number of sectors,
\item $\w M_N$ is the inertia orbifold associated with $M_N$, and $\chi(\vert \w M_N\vert )$ is the Euler characteristic of the topological space associated with this orbifold,
\item $c_1^{orb}(M_N)$ is the orbifold Chern class, and 
\item $[B_N]$ is the homology class of an (orbi)-sphere intersecting $H$ transversely in one point: this is made precise in Proposition~\ref{prop:mean_euler_boothby_wang_orbifold}.
\end{itemize}
See Proposition~\ref{prop:mean_euler_boothby_wang_orbifold} for more details.

\subsection*{Plan of the paper}
The paper is organized as follows. In Section~\ref{sec:prelim}, we give the basic definitions. In Section~\ref{sec:maslov_index}, we review the notion of Maslov index. In Section~\ref{sec:s1eq}, we discuss $S^1$-equivariant symplectic homology and its mean Euler characteristic, in order to have a suitable invariant. In Section~\ref{sec:bw}, we discuss the conditions for a Boothby--Wang bundle to possess a supporting open book. In Section~\ref{sec:abs open book}, we construct the contact open book from the data we extracted from a Boothby--Wang bundle with fibered Dehn twists as monodromy. In Section~\ref{sec:applications}, we apply our construction and inspect Reeb dynamics to distinguish fibered Dehn twists, and conclude our paper with a discussion about fibered Dehn twists that are not smoothly isotopic to the identity relative to the boundary.

\subsection*{Acknowledgements}
RC is partially supported by the NSC grant 100-2115-M-006-002 and NCTS(South), Taiwan;
FD by grant no 10631060 of the National Natural Science Foundation of China;
and OvK by the NRF Grant 2012-011755 funded by the Korean government.

% =====================================================
\section{Preliminaries}
% =====================================================
\label{sec:prelim}

\subsection{Boothby--Wang or prequantization circle bundles}

Let $(M,\omega)$ be a compact symplectic manifold with integral
symplectic form, i.e.~$[\omega]\in H^2(M;\Z)$. Then there is a
unique (up to isomorphism) differentiable complex line bundle $L$
over $M$ with $c_1(L)=[\omega]$.

Its associated principal $S^1$-bundle $\Pi: P \to M$ carries a
contact form $\theta$, the so-called {\bf Boothby--Wang form},
which is a connection $1$-form on $P$ with curvature form
\[
d\theta = -2\pi\Pi^*\ow.
\]
The condition that $\theta$ is a connection form means that
the vector field $R_\theta$ generating the principal $S^1$-action satisfies the following equations,
$$
\iota_{R_\theta} \theta=1,\quad\iota_{R_\theta}d\theta=0.
$$
It is therefore the Reeb vector field for $\theta$. This
$S^1$-bundle is called a \textbf{Boothby--Wang bundle} associated
with $(M,\ow)$. It is also known as a prequantization circle
bundle.

\subsection{Weinstein manifolds}

%Let us first define the notion of Weinstein manifold.

\begin{definition}
  Let $(W,\omega)$ be a symplectic manifold. A proper smooth function $f:\, W
  \to [0,\infty[$ is called \textbf{$\omega$--convex} if it admits a
  complete gradient-like Liouville vector field $X$, i.e.~$\lie{X}
  \omega = \omega$.  We say $(W,\omega)$ is a \textbf{Weinstein
    manifold} if there exists an $\omega$--convex Morse function.
We say the Weinstein manifold is of {\bf finite type} if, in addition, the $\omega$--convex Morse function has only finitely many critical points.
\end{definition}

\begin{remark}
From this definition it follows that all ends of a finite type Weinstein manifold $W$ are convex, i.e.~they look like symplectizations.
We shall often abuse notation, and just use the word Weinstein manifold to mean Weinstein manifold of finite type.
\end{remark}

Note that $\iota_X \omega$ defines a primitive of $\omega$, so Weinstein
manifolds are exact symplectic.
\begin{remark}
  A \textbf{compact Weinstein manifold} or \textbf{Weinstein domain} $(W_0,\omega)$ is a compact
  symplectic manifold with boundary that can be embedded into a
  Weinstein manifold $(W,\omega)$ with an $\omega$--convex Morse function
  $f$ such that $W_0$ is given as the preimage $f^{-1}([0,C])$, and such that $C=\partial W_0$ is a regular value of $f$.
  Note that the corresponding regular level set is automatically contact.
\end{remark}

In practice $\omega$--convex functions can often be found by
looking for strictly plurisubharmonic functions. Recall here that,
for a complex manifold $(W,J)$, a smooth function $f:\, W \to \R$
is \textbf{strictly plurisubharmonic} if $g(X,Y) := -d(df \circ
J)(X, JY)$ defines a Riemannian metric. Stein manifolds can be
defined as complex manifolds admitting exhausting, strictly
plurisubharmonic functions. A compact complex manifold $W_0$ with
boundary is called a compact Stein manifold if it admits a
strictly plurisubharmonic function $f$ such that the boundary
$\partial W_0$ is a regular level set. A compact Stein manifold is
a compact Weinstein manifold. By results of Eliashberg, Weinstein
manifolds can be deformed into Stein manifolds.

\subsection{Contact open books}
\label{sec:contact_open_books}

\begin{definition}
An \textbf{abstract (contact) open book} $(W, \lambda, \psi)$ consists of 
\begin{itemize}
\item a compact Weinstein manifold $(W, d\lambda)$ with $\lambda$ being a primitive of its symplectic form such that the Liouville vector field $X$ for $d\lambda$ defined by $\iota_X d\lambda=\lambda$ is transverse to $\partial W$ and pointing outward, and
\item a symplectomorphism $\psi: W \to W$ equal to the identity near the boundary $\partial W$.
\end{itemize}
\end{definition}

Given an abstract (contact) open book, Giroux proposed an explicit construction of a closed contact manifold.
This construction is as follows.
First we assume $\psi^*\lambda =\lambda-dh$ where $h$ is a positive function.
This can always be done by the following lemma \cite{Giroux:talk}.
\begin{lemma}[Giroux]
\label{lemma:deform_symplectomorphism} The symplectomorphism
$\psi$ can be isotoped, via symplectomorphisms equal to the
identity near $\partial W$, to a symplectomorphism $\widehat\psi$
that satisfies $\widehat\psi^*\lambda = \lambda-dh$.
\end{lemma}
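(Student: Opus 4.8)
The plan is to reduce the statement to an \emph{exactness} property of $\psi^*\lambda-\lambda$ and then obtain positivity for free. Since $\psi$ is a symplectomorphism, $\psi^*d\lambda=d\lambda$, so the $1$-form $\beta:=\lambda-\psi^*\lambda$ is closed; since $\psi$ equals the identity near $\partial W$, the form $\beta$ vanishes there, hence is supported in a compact subset of the interior of $W$. If $\beta$ happened to be exact we would already be done by the last step below, so the real content is that, after a suitable isotopy of $\psi$, the class $[\beta]\in H^1(W;\R)$ can be made to vanish.

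To that end I would introduce the vector field $Y$ determined by $\iota_Y d\lambda=\beta$. Because $d\lambda$ is nondegenerate and $\beta$ vanishes wherever $\psi$ is the identity, $Y$ is supported in a compact subset of the interior; and since $\beta$ is closed, $\lie{Y}d\lambda=d\,\iota_Y d\lambda=d\beta=0$, so $Y$ is a symplectic vector field. Let $(g_t)_{t\in[0,1]}$ be its flow: it exists for all time by compactness of the support, each $g_t$ is a symplectomorphism equal to the identity near $\partial W$, and $g_0=\id$. Setting $\psi_t:=\psi\circ g_t$ then gives an isotopy, through symplectomorphisms equal to the identity near $\partial W$, starting at $\psi_0=\psi$; write $\widehat\psi:=\psi_1=\psi\circ g_1$.

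Next comes the cohomological computation. From $\frac{d}{dt}\bigl(g_t^*\psi^*\lambda\bigr)=g_t^*\lie{Y}(\psi^*\lambda)=g_t^*\bigl(d\,\iota_Y\psi^*\lambda+\iota_Y d\lambda\bigr)$ and $\iota_Y d\lambda=\beta$, we get at the level of de Rham cohomology $\frac{d}{dt}\bigl[g_t^*\psi^*\lambda\bigr]=\bigl[g_t^*\beta\bigr]=[\beta]$, the last equality because $g_t$ is isotopic to the identity and so acts trivially on $H^1(W;\R)$. Integrating over $t\in[0,1]$ yields $[\widehat\psi^*\lambda]=[\psi^*\lambda]+[\beta]=[\lambda]$, i.e.\ $\widehat\psi^*\lambda-\lambda$ is exact; write $\widehat\psi^*\lambda-\lambda=-dh_0$ for a smooth function $h_0$. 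Since $\widehat\psi$ is the identity near $\partial W$, $dh_0$ vanishes there, so $h_0$ is locally constant near $\partial W$; replacing $h_0$ by $h:=h_0+C$ with $C$ a large enough constant (possible because $W$ is compact) leaves $dh=dh_0$ unchanged and makes $h>0$. Then $\widehat\psi^*\lambda=\lambda-dh$ with $h$ positive, as required.

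The only genuinely delicate point I anticipate is the step in the previous paragraph: the $1$-form $\psi^*\lambda-\lambda$ is a priori only closed, and a straightforward application of Moser's trick (a Hamiltonian isotopy) would handle only the case in which it is already exact. The fix is to compose $\psi$ with a \emph{non-Hamiltonian} symplectic isotopy supported in the interior, chosen so that its ``flux'' cancels the class $[\beta]$; this is possible precisely because $\beta$ vanishes near the boundary, so that $[\beta]$ lies in the image of compactly supported cohomology. Once exactness is achieved, positivity of the primitive is automatic by adding a constant.
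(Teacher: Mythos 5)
Your argument is correct, and there is nothing in the paper to compare it against: the paper states this lemma without proof, quoting it from the Giroux--Mohsen lecture notes, so the content has to be supplied exactly as you did. Your route is the standard flux-killing argument, and every step checks out: $\beta=\lambda-\psi^*\lambda$ is closed and vanishes identically on the neighborhood of $\partial W$ where $\psi=\id$, so the vector field $Y$ with $\iota_Y d\lambda=\beta$ is symplectic and supported in the interior, its flow $g_t$ consists of symplectomorphisms equal to the identity near $\partial W$, and composing gives $[\widehat\psi^*\lambda-\lambda]=[\psi^*\lambda-\lambda]+[\beta]=0$, after which a primitive exists and a constant shift (legitimate since $W$ is compact and $dh=0$ near $\partial W$) makes $h>0$, which is what the surrounding text of the paper actually uses. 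Two minor comments. First, $\psi^*\lambda$ is not closed, so expressions such as $[\psi^*\lambda]$ and $\frac{d}{dt}[g_t^*\psi^*\lambda]$ are abuses of notation; the clean statement is that $g_t^*\psi^*\lambda-\lambda$ is closed for every $t$ (both terms have differential $d\lambda$) and that $\widehat\psi^*\lambda-\psi^*\lambda=d\left(\int_0^1 g_t^*\iota_Y\psi^*\lambda\,dt\right)+\int_0^1 g_t^*\beta\,dt$, where the last integral is cohomologous to $\beta$ because each $g_t$ is homotopic to the identity. Second, the ``delicate point'' you flag in your closing paragraph --- that $[\beta]$ must lie in the image of compactly supported cohomology --- is automatically taken care of by your own construction: since $\beta$ itself already vanishes near $\partial W$, you may (and do) use $\beta$ itself as the compactly supported representative, so no separate appeal to the long exact sequence of the pair $(W,\partial W)$ is needed.
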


Then we define
\begin{equation*}
  A_{(W,\psi)} := W \times \R / (x,\phi)\sim (\psi(x),\phi+h(x)) \;.
\end{equation*}
This mapping torus carries the contact form
$$
\alpha = d\phi +\lambda.
$$
Since $\psi$ is the identity near the boundary of $W$, a
neighborhood of the boundary looks like $\partial W \times\,
]{-\varepsilon},0] \times  S^1$, with contact form $\alpha =
Cd\phi+ e^t\,\lambda|_{\partial W}$. Here $\phi\in S^1=\R/2\pi\Z$,
$t\in ]{-\varepsilon},0]$ and $C>0,\, 0<\varepsilon<1$ are
constant. Denote the disk $\bigl\{z\in \C\mid \abs{z} < R\bigr\}$
by $D_R^2$ and the annulus $\bigl\{z\in \C\mid r < \abs{z} <
R\bigr\}$ by $A(r,R)$. The closed unit disk $\bigl\{z\in \C\mid
\abs{z} \le 1\bigr\}$ is denoted by $D^2$. We can glue the mapping
torus $A_{(W,\psi)}$ to
\[
  B_W:=\partial W \times D^2_{1+\varepsilon}
\]
using the map
\[
  \begin{split}
    \Phi_{\mathrm{glue}}:\, \partial W \times A(1,1+\varepsilon)
    & \longrightarrow \partial W \times\, ]{-\varepsilon},0] \times S^1 \\
    \bigl(x, re^{i\phi}\bigr) &\longmapsto \bigl(x, 1-r, \phi\bigr)
    \;.
  \end{split}
\]
Pulling back the form $\alpha$ by $\Phi_{\mathrm{glue}}$, we
obtain $Cd\phi+e^{1-r}\, \lambda|_{\partial W}$ on $\partial W
\times A(1,1+\varepsilon)$, which can be easily extended to a
contact form
\[
  \beta = h_1(r)\, \lambda|_{\partial W} + h_2(r)\, d\phi
\]
on $B_W$ by requiring that $h_1$ and $h_2$ are functions from
$[0,1]$ to $\R$ whose behavior is indicated in
Figure~\ref{fig:functions_binding}: $h_1(r)$ has
exponential drop-off and $h_2(r)$ increases quadratically 
near $0$ and is constant near $1$.

\begin{figure}[htp]
\def\svgwidth{0.5\textwidth}%
\begingroup\endlinechar=-1
\resizebox{0.5\textwidth}{!}{%
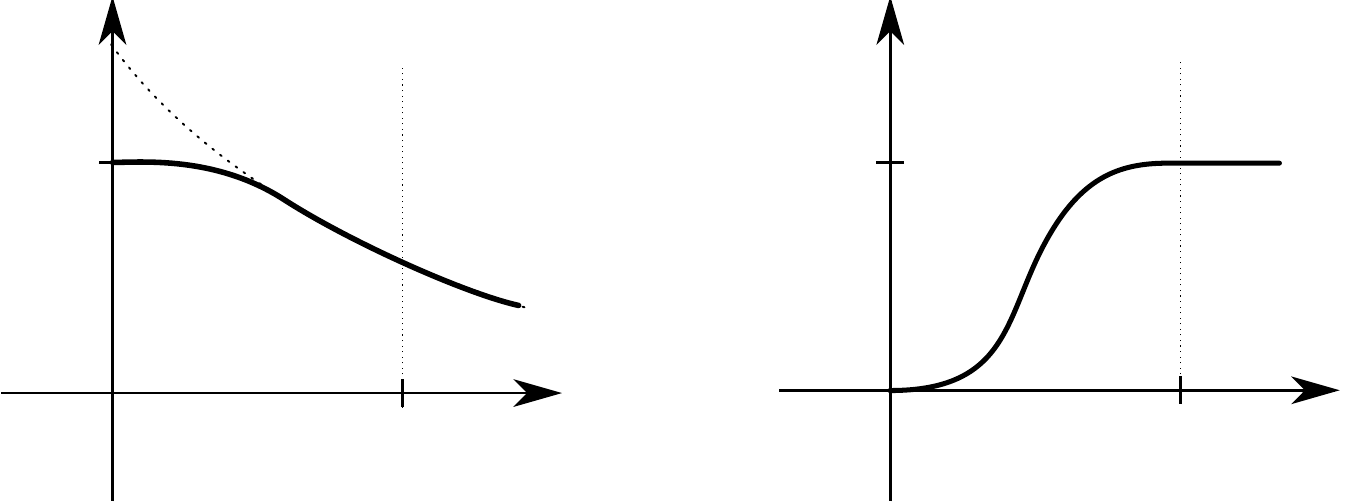%
}\endgroup
\caption{Functions for the contact form near the binding}
\label{fig:functions_binding}
\end{figure}

Gluing $A_{(W,\psi)}$ to $B_W$ via $\Phi_{\mathrm{glue}}$, we get
a closed manifold $M$. Note that the contact forms $\alpha$ on
$A_{(W,\psi)}$ and $\beta$ on $B_W$ glue together to a globally
defined contact form on $M$, whose associated contact structure
will be denoted by $\xi$.

The contact manifold $(M,\xi)$ is determined by the data $(W,\lambda,\psi)$.
We shall call it a \textbf{contact open book}, and denote it by $\open( W,\psi^{-1})$.
Note that we use $\psi^{-1}$ rather than $\psi$ in this notation.
The following remark explains this.
\begin{remark}
A contact open book $\open(W,\psi^{-1})$ has the structure of a
fiber bundle over $S^1$ away from the set $B_W$. Hence we can talk
about the monodromy of an open book, which can be obtained by
lifting the tangent vector field to $S^1$, given by
$\partial_\phi$, to a vector field on $A_{(W,\psi)}$. If we
rescale the function $h$ to $2\pi$, then the time-$2\pi$ flow
gives the monodromy. Note that a positive function times the Reeb
field is a suitable lift of $\partial_\phi$. As a result, we see
that the monodromy is given by $\psi^{-1}$.
\end{remark}

\begin{definition}
  An \textbf{open book} on a manifold $M$ is a pair $(B,\Theta)$, where
\begin{itemize}
\item{} $B$ is a codimension $2$ submanifold of $M$ with trivial
  normal bundle, and
\item{} $\Theta:\, M-B\to S^1$ gives $M - B$ with the structure of a
  fiber bundle over $S^1$ such that $\Theta$ is equal to the angular
  coordinate of the $D^2$--factor on a neighborhood $B\times D^2$ of
  $B$.
\end{itemize}
\end{definition}

The set $B$ is called the \textbf{binding} of the open book. A fiber
of $\Theta$ together with the binding is called a \textbf{page} of the
open book.

Suppose $M$ is an oriented manifold with an open book
$(B,\Theta)$. 
We regard $S^1$ as an oriented manifold, so each page $W$ gets an induced orientation by requiring that the orientation of $M-B$, as a bundle over $S^1$, matches the one coming from $M$. 
If this induced orientation on $W$ coincides with its orientation as
a symplectic manifold $(W, \ow)$, then we call the symplectic form
$\ow$ \textbf{positive}. 
Now orient the binding $B$ as the boundary of a page $W$ using the outward normal first convention.
We say that $\alpha$ induces a {\bf positive contact structure} if this orientation of $B$ matches the one coming from a contact form $\alpha$.

\begin{definition}
  A positive contact structure $\xi$ on an oriented manifold $M$ is
  said to be \textbf{carried by an open book} $(B, \Theta)$ if $\xi$
  admits a defining contact form $\alpha$ satisfying the following
  conditions.
  \begin{itemize}
  \item{} $\alpha$ induces a positive contact structure on $B$, and
  \item{} $d\alpha$ induces a positive symplectic structure on each
    fiber of $\Theta$.
  \end{itemize}
  A contact form $\alpha$ satisfying these conditions is said to be
  \textbf{adapted} to $(B, \Theta)$.
If the above holds, the open book $(B,\Theta)$ is said to be a {\bf supporting open book} for $(M,\xi)$.
\end{definition}

Two well-known results are listed below.

\begin{lemma}
  Suppose that $B$ is a connected contact submanifold of a contact manifold $(M,\xi)$.  A
  contact form $\alpha$ for $(M,\xi)$ is adapted to an open book $(B,
  \Theta)$ if and only if the Reeb field $R_\alpha$ of $\alpha$ is
  positively transverse to the fibers of $\Theta$,
  i.e.~$R_\alpha(\Theta) > 0$.
\end{lemma}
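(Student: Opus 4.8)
The plan is to reduce the statement to a single pointwise fact about the Reeb field and then deal with the binding separately in the standard local model. The key identity is $\iota_{R_\alpha}\bigl(\alpha\wedge(d\alpha)^n\bigr)=(d\alpha)^n$, where $\dim M=2n+1$; it holds because $\alpha(R_\alpha)=1$ and $\iota_{R_\alpha}d\alpha=0$. From it I would extract the following linear-algebra observation, valid at each point $p$: for any hyperplane $V\subset T_pM$ one has $R_\alpha|_p\notin V$ if and only if $d\alpha|_V$ is nondegenerate. Indeed, if $R_\alpha\in V$ then $R_\alpha$ lies in the kernel of $d\alpha|_V$; and if $R_\alpha\notin V$ then $V$ is a linear complement of $\ker d\alpha=\R R_\alpha$, on which $d\alpha$ is automatically nondegenerate. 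Moreover, in the latter case, contracting the volume form $\alpha\wedge(d\alpha)^n$ (which represents the orientation of $M$ determined by the positive contact form $\alpha$) with $R_\alpha$ and restricting to $V$ gives precisely $(d\alpha|_V)^n$; so the symplectic orientation of $(V,d\alpha|_V)$ agrees with the orientation of $V$ obtained by contracting the orientation of $M$ with $R_\alpha$.

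I would then apply this with $V=T_p\bigl(\Theta^{-1}(\phi)\bigr)=\ker d\Theta_p$ at each $p\in M-B$. Orienting the pages by the fibration convention over the oriented base $S^1$, and using that the base is one-dimensional (so the ``fiber then base'' and ``base then fiber'' conventions coincide), one checks that contracting the orientation of $M$ with any vector $Y$ satisfying $d\Theta(Y)>0$ yields the page orientation on $\Theta^{-1}(\phi)$, and yields minus it when $d\Theta(Y)<0$. Combined with the previous observation, this shows: $d\alpha$ restricts to a \emph{positive} symplectic form on every fiber of $\Theta$ if and only if $R_\alpha(\Theta)>0$ on all of $M-B$. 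This already settles the implication ``adapted $\Rightarrow R_\alpha(\Theta)>0$'' (the binding clause in the definition of adapted is not needed for it), and it gives the ``page half'' of the converse.

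For the remaining part of the converse I must show that $R_\alpha(\Theta)>0$ forces $\alpha$ to induce a positive contact structure on $B$. Here I would pass to the model neighborhood $B\times D^2$ in which $\Theta$ is the angular coordinate $\phi$, so that $d\Theta=d\phi=r^{-2}(x_1\,dx_2-x_2\,dx_1)$. Writing $a_1,a_2$ for the $D^2$-components of $R_\alpha$, the condition $R_\alpha(\Theta)=r^{-1}(a_2\cos\phi-a_1\sin\phi)>0$ for all $\phi$, as $r\to0$, forces $a_1=a_2=0$ along $B$; hence $R_\alpha$ is tangent to $B$ there. Restricting the Reeb equations then shows $R_\alpha|_B$ is the Reeb field of $\alpha|_B$, which is a genuine contact form because $B$ is a contact submanifold by hypothesis. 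What is left is to match orientations: the binding carries the orientation of $B=\partial W_\phi$ with the outward-normal-first convention, $W_\phi$ being oriented by the limiting symplectic orientation of its interior, and a direct computation in the model (contracting $\alpha\wedge(d\alpha)^n$ suitably and using $R_\alpha|_B\in TB$) identifies this with the orientation defined by $\alpha|_B\wedge(d\alpha|_B)^{n-1}$; connectedness of $B$ then lets one verify the sign at a single point. This last orientation bookkeeping — juggling the fibration convention, the outward-normal convention, and the degeneration of $d\alpha|_{W_\phi}$ as one approaches $B$ — is the only genuinely fiddly step; everything preceding it is soft.
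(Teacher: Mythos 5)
The paper does not actually prove this lemma (it is listed as one of two ``well-known results''), so the only question is whether your argument stands on its own. The part away from the binding does: the pointwise observation that $\iota_{R_\alpha}\bigl(\alpha\wedge(d\alpha)^n\bigr)=(d\alpha)^n$, hence that for a hyperplane $V$ the form $d\alpha|_V$ is nondegenerate iff $R_\alpha\notin V$, and that its symplectic orientation is the contraction of the $M$-orientation by $R_\alpha$, correctly yields the equivalence ``$d\alpha$ is a positive symplectic form on every fiber of $\Theta$ $\Longleftrightarrow$ $R_\alpha(\Theta)>0$''. This settles ``adapted $\Rightarrow$ transverse'' and the page half of the converse, and your limiting argument that $R_\alpha$ is tangent to $B$ is also fine.

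The gap is in the binding half of the converse. You propose to fix the sign of $\alpha|_B\wedge(d\alpha|_B)^{n-1}$ against the page-boundary orientation by ``a direct computation in the model'' near $B$, with connectedness only used to check the sign at one point. No such local computation can exist, because the sign is not determined by any neighborhood of $B$: on $S^1\times D^2$ with $\Theta=\phi$, take $\alpha=-(1+r^2)\,dz-r^2\,d\phi$. Then $\alpha\wedge d\alpha=2\,dz\wedge dx\wedge dy\neq 0$, the Reeb field is $R_\alpha=-\partial_z+x\partial_y-y\partial_x$, so $R_\alpha(\Theta)=1>0$ off $B=\{r=0\}$ and $d\alpha$ is positive on the local pages, and $B$ is a contact (transverse) submanifold; yet $\alpha|_B=-dz$ is \emph{negative} for the outward-normal-first boundary orientation of the pages. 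So all your hypotheses hold locally while the conclusion fails, and ``verify the sign at a single point'' presupposes exactly what cannot be done. The missing ingredient is global: for a closed page $W_\phi$ (compact, $\partial W_\phi=B$), the page half gives $0<\int_{W_\phi}(d\alpha)^n=\int_{B}\alpha\wedge(d\alpha)^{n-1}$ by Stokes, the boundary carrying the outward-normal-first orientation; since $B$ is a contact submanifold the integrand is nowhere zero, and connectedness of $B$ — used here, not pointwise — forces it to be positive, i.e.\ $\alpha$ induces the positive contact orientation on $B$. With that replacement your proof is complete; as written, the binding step would fail.
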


\begin{proposition}
  A contact open book $\open(W, \psi^{-1})$ admits a natural
  open book carrying the contact structure $\xi$ as defined in the above
  construction.
\end{proposition}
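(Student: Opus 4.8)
The plan is to make the identification between the abstract contact open book $\open(W,\psi^{-1})$, which carries a globally defined contact form $\alpha$ by the construction just described, and a genuine open book $(B,\Theta)$ on the underlying closed manifold $M$, and then to check that $\alpha$ is adapted to $(B,\Theta)$ in the sense of the preceding definition. First I would set $B := \partial W \times \{0\} \subset B_W = \partial W \times D^2_{1+\varepsilon}$; this is a codimension $2$ submanifold with trivial normal bundle, the triviality being manifest from the product structure. For $\Theta$, on $B_W$ I would take the angular coordinate of the $D^2_{1+\varepsilon}$-factor, i.e.\ $\Theta(x, re^{i\phi}) = \phi$, which is exactly the angular coordinate near $B$ as required; on the mapping torus $A_{(W,\psi)} = W\times\R/\!\sim$ I would take $\Theta$ to be the projection to the $\R/2\pi\Z = S^1$ factor (the coordinate denoted $\phi$), noting that this descends because the gluing relation shifts $\phi$ by $h(x)$, which becomes an integer multiple of $2\pi$ only trivially — more precisely, after rescaling $h$ so the fiber is $\R/2\pi\Z$, the relation is compatible with the projection to $S^1$. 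The key compatibility to verify is that the two definitions of $\Theta$ agree on the overlap: under $\Phi_{\mathrm{glue}}(x, re^{i\phi}) = (x, 1-r, \phi)$, the angular coordinate $\phi$ on the annulus side matches the mapping-torus coordinate $\phi$, so the two definitions glue.

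Next I would check the fiber bundle condition: away from $B$, the map $\Theta$ is a submersion onto $S^1$ with fibers diffeomorphic to the interior completion of $W$ — on the mapping torus part the fibers are copies of $W$, and near $B$ the fibers are $W$ with a collar of $\partial W$ attached, glued along the half-open annulus to close up to a page $W$ with its binding $B$ attached. One must observe that the monodromy of this bundle is $\psi^{-1}$ (as recorded in the earlier remark), so the abstract datum matches the geometric open book. Then the two adaptedness conditions: that $d\alpha$ restricts to a positive symplectic form on each fiber follows because on $A_{(W,\psi)}$ we have $\alpha = d\phi + \lambda$ so $d\alpha = d\lambda$, which is symplectic on $W$ by hypothesis, and near the binding $d\beta = d(h_1(r)\lambda|_{\partial W} + h_2(r)d\phi)$ restricts on each page to a form that, by the chosen behavior of $h_1, h_2$ in Figure~\ref{fig:functions_binding} (exponential drop-off of $h_1$, quadratic $h_2$), is a positive symplectic form extending $e^t \lambda|_{\partial W}$; that $\alpha$ induces a positive contact structure on $B$ is a matter of tracking the orientation conventions (outward-normal-first on the binding as boundary of a page) and matching them against the signs of $h_1, h_2$ near $r=0$. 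Equivalently, by the first of the two listed lemmas, I would verify that the Reeb field $R_\alpha$ is positively transverse to the fibers of $\Theta$: on the mapping torus $R_\alpha = \partial_\phi$ (up to the rescaling), so $R_\alpha(\Theta) = 1 > 0$, and near the binding the Reeb field of $\beta$ has a positive $\partial_\phi$-component by the convexity/monotonicity of $h_2$ near $0$, which is exactly why $h_2$ was required to increase quadratically there.

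The main obstacle, I expect, is not any single hard estimate but the bookkeeping around orientations and the behavior near the binding: one must confirm that the closed manifold $M$ produced by the gluing is the same smooth manifold underlying the open book $(B,\Theta)$, that the page orientation induced by viewing $M - B$ as an $S^1$-bundle agrees with the symplectic orientation of $(W, d\lambda)$, and that the induced boundary orientation on $B$ is the positive one for $\alpha$ — all of which depend on getting the signs in $\Phi_{\mathrm{glue}}$ and in the profiles of $h_1, h_2$ consistent. The transversality of $R_\alpha$ to the fibers near $r = 0$ is the only place a genuine (though elementary) computation enters: writing $\beta = h_1(r)\lambda|_{\partial W} + h_2(r)\,d\phi$ and solving $\iota_{R_\beta}\beta = 1$, $\iota_{R_\beta} d\beta = 0$ shows $R_\beta$ is a combination of the Reeb field of $\lambda|_{\partial W}$ and $\partial_\phi$ with the $\partial_\phi$-coefficient proportional to $-h_1'(r)/(h_1 h_2' - h_1' h_2)$, which is positive precisely under the stated sign conditions; near $r = 1$ the form is $\beta = (\text{const})\,\lambda|_{\partial W} + (\text{const})\,d\phi$ matching $\alpha$ on the mapping torus, so the global form is smooth and its Reeb field is $\partial_\phi$ there. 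Assembling these observations gives a contact form adapted to $(B,\Theta)$, which is precisely the assertion.
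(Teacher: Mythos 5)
The paper states this proposition without proof, labeling it a well-known result, so there is no internal argument to compare against. Your write-up is a reasonable account of the standard argument and takes the expected route: identify $B = \partial W \times \{0\}$ and define $\Theta$ piecewise, check compatibility under $\Phi_{\mathrm{glue}}$, then verify adaptedness, preferably by the Reeb-transversality lemma the paper has just recorded. The computation near the binding is correct: with $\beta = h_1(r)\lambda|_{\partial W} + h_2(r)\,d\phi$ one finds $R_\beta(\Theta) = -h_1'/(h_1 h_2' - h_1' h_2)$, which is positive under the stated sign conditions on $h_1,h_2$ (and equals the constant $C^{-1}$ near $r=1$, matching $R_\alpha=\frac{1}{C}\partial_\phi$ on the mapping torus side).

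The one place you should tighten is the definition of $\Theta$ on $A_{(W,\psi)}$. The raw coordinate $\phi$ does \emph{not} descend to a map $A_{(W,\psi)}\to S^1$, since the gluing shifts $\phi$ by the non-constant positive function $h$, and you cannot literally ``rescale $h$ to $2\pi$'' without changing either $\psi$ or $\lambda$, because $h$ is pinned down (up to an additive constant) by $\psi^*\lambda = \lambda - dh$. What one can do is define $\Theta([x,t]) = t/(h\circ\psi^{-1})(x) \pmod 1$ on the fundamental domain $\{0\le t\le h(\psi^{-1}(x))\}$; this is well defined since the two boundary faces differ by exactly one period, it is smooth, it restricts to the angular coordinate near $\partial W$ where $h$ is constant, and it satisfies $\partial_\phi(\Theta)=1/(h\circ\psi^{-1}) > 0$, so $R_\alpha(\Theta)>0$ and the lemma applies. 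Alternatively, one may invoke Gray stability to deform to a model with constant $h$, which is presumably what the paper's own informal remark about ``rescaling $h$'' has in mind. Either way the conclusion stands; your argument just needs this one sentence made precise, since as written the claim that ``$\phi$ descends after rescaling $h$'' conflates a change of fibration parametrization with a change of the contact data.
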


\subsection{Fibered Dehn twists}
\label{sec:symplectomorphisms_monodromy} Suppose $(P,\theta)$ is a
contact manifold that admits an $S^1$-action generated by the flow
of the Reeb field $R_\theta$, i.e., Boothby--Wang orbibundles over
symplectic orbifolds. Now choose a function $f:[0,1] \to \R $ that
is constant $2\pi$ in a neighborhood of $0$ and constant $0$ in a
neighborhood of $1$. Then we can define a symplectomorphism of
$(P\times [0,1],d (e^t \theta ) )$ equal to the identity near the
boundary by sending
\[
\psi:(x,t) \longmapsto (x \cdot f(t)\, \mathrm{mod}\, 2\pi,t ).
\]
Since we also need to know the action of $\psi$ on $e^t \theta$
rather than just on $d(e^t \theta)$, let us compute $\psi^* (e^t
\theta)$. Observe that
\[
\begin{split}
%\frac{d}{ds} (Fl^{f(t)R}_s)^*e^t \theta &=
\mathcal L_{f(t)R_\theta} (e^t \theta) &= d (\iota_{f(t)R_\theta} (e^t \theta))+\iota_{f(t)R_\theta} d(e^t \theta)\\
&=d(e^tf(t))-e^t f(t) dt\\
&=-d\left( A-e^tf(t)+\int^t_0 e^{s}f(s)ds \right),
\end{split}
\]
where $A$ is constant. Hence we have
$$
\psi^* (e^t \theta)=e^t \theta-d\left( A-e^tf(t)+\int^t_0
e^{s}f(s)ds \right),
$$
so we see in particular that $\psi$ is a symplectomorphism.

\begin{definition}
Let $(W,\omega)$ be a convex symplectic manifold whose boundary admits a quasi-regular contact form (i.e.~all Reeb orbits are periodic).
Define a symplectomorphism $\tilde \psi$ of $W$ by declaring $\tilde \psi$ to be equal to $\psi$ on a collar neighborhood of $\partial W$ and extending $\tilde \psi$ to be the identity on $W$ outside that neighborhood.
Such a symplectomorphism is called a {\bf right-handed fibered Dehn twist}.
\end{definition}

Consider a fibered Dehn twist on an exact convex symplectic manifold $(W,d\lambda)$.
Observe that the above computation allows us to avoid Lemma~\ref{lemma:deform_symplectomorphism}, since the symplectomorphism has already the appropriate form, i.e.~$
\psi^* \lambda=\lambda-dh$, where the function $h$ is only non-constant in a collar neighborhood of $\partial W$,
$$
h=A-e^tf(t)+\int^t_0 e^{s}f(s)ds.
$$

\subsection{Monodromy and fillability}
The monodromy of a contact open book provides information about the fillability of a contact manifold.
For our purposes, the following result is the most relevant.
\begin{theorem}
\label{thm:subcritical_open_book} A contact manifold $(M,\xi)$ is
subcritically Stein fillable if and only if there is a contact
open book $\open(W,\id)$ contactomorphic to $(M,\xi)$.
\end{theorem}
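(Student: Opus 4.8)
The plan is to prove the two implications separately; the technical core common to both is the identification of the contact boundary of a split Weinstein domain $W\times D^2$ with the contact open book $\open(W,\id)$ produced by the construction of Section~\ref{sec:contact_open_books}.

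First I would treat the implication ``$\open(W,\id)\cong(M,\xi)$ implies $(M,\xi)$ subcritically Stein fillable''. Given such an open book, with $(W,d\lambda)$ a compact Weinstein domain of dimension $2n-2$, I would take as candidate filling the product $W\times D^2$ with the product Liouville form $\lambda\oplus\tfrac12(x\,dy-y\,dx)$; this is a Weinstein domain with corners along $\partial W\times\partial D^2$, and after rounding the corner one obtains an honest Weinstein domain $V$. An $\omega$-convex Morse function on $V$ is obtained by adding an $\omega$-convex Morse function on $W$ --- all of whose critical points have index $\leq n-1$ since $\dim W=2n-2$ --- to the function $|z|^2$ on $D^2$, which has a single critical point of index $0$; hence every critical point of $V$ has index $\leq n-1<n$, and $V$ is subcritical. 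It then remains to identify $(\partial V,\ker\alpha)$ with $\open(W,\id)$, which amounts to re-reading the construction of Section~\ref{sec:contact_open_books}: the part of $\partial(W\times D^2)$ of the form $W\times\partial D^2$ is the mapping torus $A_{(W,\id)}\cong W\times S^1$ with contact form $d\phi+\lambda$; the part $\partial W\times D^2$ is the binding region $B_W$ with contact form $\beta=h_1(r)\,\lambda|_{\partial W}+h_2(r)\,d\phi$; and the two pieces are glued by $\Phi_{\mathrm{glue}}$. Note that Lemma~\ref{lemma:deform_symplectomorphism} is vacuous here, since $\id^*\lambda=\lambda$. By Eliashberg's theorem the Weinstein filling $V$ may be deformed to a Stein filling, so $(M,\xi)$ is subcritically Stein fillable.

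For the converse, I would start from a subcritical Stein domain $V$ of dimension $2n$ filling $(M,\xi)$, which by Eliashberg's theorem may be treated as a subcritical Weinstein domain, and then invoke Cieliebak's structure theorem for subcritical Weinstein domains: $V$ is Weinstein-deformation equivalent to a split domain $W_0\times D^2$ for some Weinstein domain $W_0$ of dimension $2n-2$, so in particular $(M,\xi)$ is contactomorphic to $\partial(W_0\times D^2)$. Combined with the boundary identification from the previous step, this yields $(M,\xi)\cong\open(W_0,\id)$, a contact open book with trivial monodromy.

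The main obstacle is the converse direction, and it is not really an obstacle of the present paper: it rests entirely on Cieliebak's nontrivial splitting theorem, whose proof is an explicit Weinstein handle manipulation showing that a subcritical handle (of index $<n$) attached to $W_0\times\C$ can be absorbed into the $\C$-factor up to Weinstein homotopy. In the forward direction the only point requiring care is the corner-rounding, together with the verification that the induced contact form on $\partial V$ agrees with the glued form $\alpha\cup\beta$ of Section~\ref{sec:contact_open_books}; this is routine provided the cut-off functions $h_1,h_2$ are chosen as in Figure~\ref{fig:functions_binding} so that the Reeb vector field remains positively transverse to the pages.
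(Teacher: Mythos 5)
Your proposal follows the same two-step strategy as the paper: Cieliebak's splitting theorem handles the direction from subcritical Stein fillability to a trivial-monodromy open book, and the identification $\open(W,\id)=\partial(W\times D^2)$ together with an explicit subcritical (Stein/Weinstein) structure on $W\times D^2$ handles the converse. You phrase the second step in Weinstein language with corner-rounding and an index count and then invoke Eliashberg, whereas the paper works directly with the plurisubharmonic function $f_W(x)+|z|^2$ on $W\times D^2$, but these are the same argument up to the standard Stein–Weinstein dictionary.
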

This assertion follows from a theorem of Cieliebak \cite{Cieliebak:Stein_split},
which asserts that subcritical Stein manifolds are split, and the simple observation that an open book with trivial monodromy can be written as
$$
\open(W,\id)=(\partial W\times D^2) \cup_\partial (W\times
S^1)=\partial (W\times D^2),
$$
where $W$ is Stein.
Suppose $f_W$ is a plurisubharmonic function on $W$.
Then we obtain a plurisubharmonic function on $W\times D^2$,
\begin{align*}
f: W\times D^2 & \longrightarrow  \R \\
(x,z) & \longrightarrow  f_W(x)+|z|^2,
\end{align*}
inducing the same contact structure as the one coming from an open
book with trivial monodromy.

% =====================================================
\section{Maslov index}
% =====================================================
\label{sec:maslov_index}

\subsection{Definition of a Maslov index using a crossing form}
Here we shall work with the Robbin-Salamon definition of the Maslov index, see \cite{Robbin:Maslovindex}.

Let $\omega_0$ denote the standard symplectic form on $\R^{2n}$ given by
$$
\omega_0=dx \w dy.
$$
\begin{definition}
Let $\psi:[0,T]\to Sp(2n)$ be a path of symplectic matrices.
We call a point $t\in [0,T]$ a {\bf crossing} if $\det (\psi(t)-\id)=0$.
For a crossing $t$, let $V_t=\ker (\psi(t)-\id)$ and define for $v\in V_t$ the quadratic form
$$
Q_t(v,v):=\omega_0(v,\dot \psi(t) v).
$$
The quadratic form $Q_t$ is called the {\bf crossing form} at $t$.
\end{definition}

Let us now define the Maslov index for symplectic paths in the following steps.
\begin{enumerate}
\item Take a path of symplectic matrices $\psi:[0,T]\to Sp(2n)$ and suppose that all crossings are isolated.
Suppose furthermore that all crossings are non-degenerate, i.e.~the crossing form $Q_t$ at the crossing $t$ is non-degenerate as a quadratic form.
\item Then we define the Maslov index for such paths $\psi$ as
$$
\mu(\psi)=\frac{1}{2} \sgn Q_0+\sum_{t\in (0,T) \text{ crossing} } \sgn Q_t +\frac{1}{2} \sgn Q_T
$$
Here $\sgn$ denotes the signature (i.e.~the number of positive
eigenvalues minus the number of negative eigenvalues) of a
quadratic form. For $*=0$ or $T$, $\sgn Q_* = 0$ if $*$ is not a
crossing.

According to Robbin and Salamon, $\mu(\psi)$ is invariant under
homotopies of the path $\psi$ with fixed endpoints. \item For a
general path of symplectic matrices $\psi:[0,T]\to Sp(2n)$, we
choose a perturbation $\tilde \psi$ of $\psi$ while fixing the
endpoints such that $\tilde\psi$ has only non-degenerate crossings.

\item Define
$$
\mu(\psi):=\mu(\tilde \psi).
$$
This is well defined according to Robbin and Salamon
\cite{Robbin:Maslovindex}.
\end{enumerate}

Recall that a non-degenerate Reeb orbit is a periodic Reeb orbit for which the restriction of the linearized return map to the contact structure has no eigenvalues equal to $1$.
To define the Conley-Zehnder index of a non-degenerate Reeb orbit $\gamma$ we choose a spanning disk $D_\gamma$ for $\gamma$ and trivialize the contact structure $\xi$ over $D_\gamma$. The linearized flow along $\gamma$ with respect to that trivialization then gives rise to a path of symplectic matrices, $\psi(t):=TFl^R_t(\gamma(0) \, )|_{\xi}$.
Then the {\bf Conley-Zehnder index} of $\gamma$ is given by
$$
\mu_{CZ}(\gamma):=\mu(\psi).
$$

\begin{remark}
\label{rem:CZ-index}
Some remarks are in order.
\begin{enumerate}
\item For a spanning disk to exist, the orbit $\gamma$ needs to be contractible. If we use Seifert surfaces rather than disks, then we can also consider homologically trivial periodic orbits.
\item The Conley-Zehnder index depends on the choice of spanning disk via the formula,
$$
\mu_{CZ}(\gamma;D'=D\# A)=\mu_{CZ}(\gamma,D)+2 \langle c_1(\xi),[A] \rangle.
$$
Here $A$ is a $2$-sphere, and $[A]$ its homology class.
To deal with this issue in symplectic homology (see Section~\ref{sec:s1eq}), one can consider coefficient rings other than $\Q$.
However, we shall only consider symplectic manifolds $W$ with contact type boundary for which $c_1(W)$ evaluates to $0$ on $\pi_2(W)$.
\item The Conley-Zehnder index is defined for non-degenerate orbits. We shall often use a Morse-Bott setup though.
In such a degenerate setup we shall say Maslov index (or Robbin-Salamon index): this notion is defined using the same scheme.
\item The Conley-Zehnder index appears in the index formula for the moduli space of Floer trajectories.
It plays the same role as the Morse index in Morse homology.
\item The Maslov index has useful properties, see \cite{Robbin:Maslovindex}.
The catenation property is of particular interest to us.
If $\psi_1$ and $\psi_2$ are symplectic paths with matching endpoints, then the Maslov index of the catenation is given by
$$
\mu(\psi_1 * \psi_2)=\mu(\psi_1)+\mu(\psi_2).
$$
It is important to use the Robbin-Salamon version of the Maslov index for this.
\end{enumerate}
\end{remark}

% =====================================================
\section{$S^1$-equivariant symplectic homology}
% =====================================================
\label{sec:s1eq}
In this section we briefly discuss $S^1$-equivariant symplectic homology, a symplectic deformation invariant of exact symplectic manifolds with contact type boundary.

Let $(W_0,\omega)$ be a compact exact symplectic manifold with
contact type boundary. Symplectic homology is a Floer-like
homology theory that measures both information about periodic Reeb
orbits on the boundary and information about the filling. The
version of symplectic homology that we shall use, is
$S^1$-equivariant symplectic homology. $S^1$-equivariant homology
is a special version of parametrized symplectic homology,
introduced first by Viterbo \cite{Viterbo:Floer_applications}, and
worked out by Bourgeois and Oancea
\cite{Bourgeois:Gysin_S1_equivariant_symplectic_homology}.

We survey Bourgeois and Oancea's approach.
The idea is to think of $S^1$ as acting on $S^{2N+1}$ and to take the limit $N\to\infty$ in order to have a model for $ES^1$.
Using such a model one can apply the Borel construction to symplectic homology.

First of all, we complete the symplectic manifold $W_0$ by attaching the positive part of a symplectization: define
$$
W:=W_0 \bigcup_\partial (\R_{\geq 0}\times \partial W_0),
$$
where the symplectic form on the symplectization part is given by $d(e^t \alpha)$.
Here $\alpha$ is obtained from the Liouville form for $\omega$ by restricting to $\partial W_0$.

The action spectrum $\Spec (\alpha)$ of $(\partial W_0,\alpha)$ is
defined by
$$\Spec(\alpha):=\bigl\{ T\in \R^+ \mid {\rm there\ is \ a \
closed\  Reeb\  orbit \ of \ period}\  T\bigr\}.$$

Choose a Hamiltonian family $H\in C^{\infty} ( S^1\times W\times
S^{2N+1}, \R )$ with the following properties:
\begin{enumerate}
\item $H<0$ on $S^1\times W_0\times S^{2N+1}$; \item there exists
$t_0\ge 0$ such that $H(\theta,t,q,\lambda)=se^t+\beta (\lambda)$
for $t\ge t_0$ with $0<s\notin \Spec (\alpha)$ and $\beta\in
C^{\infty}(S^{2N+1},\R )$.
\end{enumerate}
Such a Hamiltonian family is called an admissible Hamiltonian
family. Consider the family of action functionals
\begin{align*}
\mathcal A: C^{\infty}_{\rm contr}(S^1,W)\times S^{2N+1} & \longrightarrow \R \\
(\gamma,\lambda) & \longmapsto {\mathcal
A}_\lambda(\gamma)=-\int_{D^2} \sigma ^*
\omega-\int_{S^1}H_\lambda(\theta,\gamma(\theta)\, )d\theta.
\end{align*}
Here $C^{\infty}_{\rm contr}(S^1,W)$ denotes the space of smooth
contractible loops in $W$, $\sigma:D^2 \to W$ is a spanning disk
for $\gamma$ and $H_{\lambda} (\theta,x)=H(\theta,x,\lambda)$.

\begin{remark}
This can be generalized to non-contractible loops $\gamma$ by
choosing reference loops for all free homotopy classes of loops in
$W$ and taking for $\sigma:[0,1]\times S^1\to W$ a homotopy from
such a reference loop to $\gamma$. Note also that if
$(\gamma,\lambda)$ is a critical point of this action functional,
then $\gamma$ is a $1$-periodic \emph{Hamiltonian orbit} of
$X_{H_{\lambda}}^{\theta}$ (in
\cite{Bourgeois:Gysin_S1_equivariant_symplectic_homology},
$S^1=\R/\Z$). Here we use the convention
$$
i_{X_{H_{\lambda}}^{\theta}}\omega=dH_{\lambda,\theta},
$$
where $H_{\lambda,\theta}(x)=H(\theta,x,\lambda)$, to define the
Hamiltonian vector field.
\end{remark}

Let $\{ J=(J^{\theta}_{\lambda}),\ \lambda\in S^{2N+1},\ \theta\in
S^1 \}$ be a family of $\theta$-dependent compatible almost complex
structures on $W$ which, at infinity, are invariant under
translations in the $\R$-direction and satisfy the relations
$J^{\theta}_{\lambda}\xi=\xi,\
J^{\theta}_{\lambda}(\partial_t)=R_{\alpha}$, where $\xi$ is the
contact structure $\ker\alpha$ on $\partial W_0$.
\begin{definition}
Such an admissible family of almost complex structures
$J=(J^\theta_\lambda)$ is called $S^1$-invariant if
$$
J^{\theta+\tau}_{\tau \lambda}=J^\theta_\lambda.
$$
\end{definition}
Note that here $S^1$ acts diagonally on $S^1\times S^{2N+1}$.
Given such a family of almost complex structures we obtain a
family of $L^2$-metrics on $C^{\infty}(S^1,W)$, parametrized by
$S^{2N+1}$: for $X,Y\in
T_{\gamma}C^{\infty}(S^1,W)=\Gamma(\gamma^*TW)$,
$$
\langle X,Y \rangle_{\lambda}=\int_{S^1} \omega(X(\theta),J^\theta_\lambda Y(\theta)\,)d\theta.
$$
Finally, we also need an $S^1$-invariant metric $g$ on the
parameter space $S^{2N+1}$ to write down the flow equations.

Let $H: S^1\times W\times S^{2N+1}\to\R$ be an admissible
Hamiltonian family which is $S^1$-invariant, i.e.
$H(\theta+\tau,\cdot,\tau \lambda)=H(\theta,\cdot,\lambda)$. We
denote by $\mathcal{P}^0(H)$ the set of critical points of
$\mathcal{A}$. Since $H$ is $S^1$-invariant, the family
$\mathcal{A}$ is invariant with respect to the diagonal action of
$S^1$, i.e. $\mathcal{A}(\tau\gamma,\tau
\lambda)=\mathcal{A}(\gamma,\lambda)$, where
$(\tau\gamma)(\cdot)=\gamma(\cdot-\tau),\ \tau\in S^1$. Thus
$\mathcal{P}^0(H)$ is $S^1$-invariant, i.e. if
$(\gamma,\lambda)\in \mathcal{P}^0(H)$, then
$(\tau\gamma,\tau\lambda)\in \mathcal{P}^0(H)$ for all $\tau\in
S^1$. Given $p=(\gamma,\lambda)\in\mathcal{P}^0(H)$, we denote
$S_p=S_{(\gamma,\lambda)}:=\{ (\tau\gamma,\tau\lambda):\tau\in
S^1\}\subset \mathcal{P}^0(H)$.

The ``gradient'' flow of the action gives rise to the parametrized Floer equation
for a pair $(u,\lambda)$, where $u:\R\times S^1\to W$ and $\lambda:\R \to S^{2N+1}$.
These equations and initial conditions are as follows.
\begin{align*}
\partial_s u+J^\theta_{\lambda(s)} \partial_\theta u-J^\theta_{\lambda(s)} X^\theta_{H_{\lambda(s)}}(u)&=0, \\
\dot \lambda(s)-\int_{S^1}\vec \nabla_{\lambda}H(\theta,u(s,\theta),\lambda(s))d\theta &=0,\\
\lim_{s\to -\infty} (u(s,\cdot),\lambda(s)\, ) &\in S_{\bar p}, \\
\lim_{s\to \infty} (u(s,\cdot),\lambda(s)\, ) &\in S_{\underline
p},
\end{align*}
where $\bar{p},\underline{p}\in \mathcal{P}^0(H)$. Denote by
$\mathcal M(S_{\bar p},S_{\underline p};H,J,g)$ the moduli space
of such Floer trajectories up to reparametrization. The
reparametrization action is here given by the $\R$-action on the
$s$-coordinate.

\subsection{Chain complex and differential}
Let $H: S^1\times W\times S^{2N+1}\to\R$ be an admissible
Hamiltonian family which is $S^1$-invariant and satisfies the
following: every $S^1$-orbit of critical points $S_p\subset
\mathcal{P}^0(H)$ is non-degenerate in the sense that the Hessian
$d^2\mathcal A(\gamma,\lambda)$ has a $1$-dimensional kernel for
some (and hence any) $(\gamma,\lambda)\in S_p$. We denote by
$\mathcal{H}_{N,{\rm reg}}^{S^1}$ the set of such Hamiltonian
families.

Define the $S^1$-equivariant chain complex $SC_*^{S^1,N}(H,J,g)$
as a chain complex whose underlying $\Q$-vector space is
$$
SC_*^{S^1,N}(H,J,g):=\bigoplus_{S_p \subset \mathcal P^0(H)} \Q
\langle S_p \rangle .
$$
The grading of each $S_p$ is given by
$$
|S_p|=-\mu(p)+N+\frac{1}{2}.
$$
See \cite{Bourgeois:Gysin_S1_equivariant_symplectic_homology} for
the definition of $\mu(p)$. We assume here that $c_1(W_0)$ is a
torsion class, see Remark~\ref{rem:CZ-index}.
\begin{remark}
Note that there is a sign and a shift with respect to the conventions of contact homology: the sign is necessary
since we are considering Hamiltonian orbits, where we use the convention that $i_{X_H}\omega=dH$.
With this definition, the Hamiltonian vector field runs in the direction opposite to that of the Reeb field.
\end{remark}

Since $\mathcal{A}$ and $(J,g)$ are $S^1$-invariant, $\mathcal M
(S_{\bar p},S_{\underline p};H,J,g)$ carries a free action of
$S^1$ induced by the diagonal action on $C^{\infty}(S^1,W)\times
S^{2N+1}$, i.e. $\tau
(u(\cdot),\lambda):=(u(\cdot-\tau),\tau\lambda)$. We denote the
quotient by $\mathcal M_{S^1}(S_{\bar p},S_{\underline
p};H,J,g):=\mathcal M (S_{\bar p},S_{\underline p};H,J,g)/S^1$.
According to Bourgeois and Oancea
\cite{Bourgeois:Gysin_S1_equivariant_symplectic_homology}, this is
a smooth manifold of dimension
$$
\dim \mathcal M_{S^1}(S_{\bar p},S_{\underline p};H,J,g)=-\mu( \bar p)+\mu( \underline p ) -1
$$
if we choose a suitable perturbation data $(J,g)$ for the
Hamiltonian family $H$. Hence the following definition for the
differential makes sense,
$$
\partial^{S^1} (S_{\bar p})=\sum_{\substack{S_{\underline p}\subset\mathcal{P}^0(H)\\ -\mu( \bar p)+\mu( \underline p )= 1
}}
 \left( \# \mathcal M_{S^1}(S_{\bar p},S_{\underline p};H,J,g) \right ) S_{\underline p},
$$ where $\#$ is a signed count of the number of elements of
$\mathcal M_{S^1}(S_{\bar p},S_{\underline p};H,J,g)$. One can
prove that $\partial^{S^1}$ is actually a differential, so
$\partial^{S^1} \circ \partial^{S^1}=0$.

Next, define the $S^1$-equivariant Floer homology groups by taking
the homology,
$$
SH_*^{S^1,N}(H,J,g):=H_*(SC_*^{S^1,N}(H,J,g),\partial^{S^1}).
$$
One can show that these Floer homology groups do not depend on the
choice of perturbation data $(J,g)$, so we shall write
$SH_*^{S^1,N}(H)$ from now on. Taking the direct limit over the
Hamiltonians as the non-equivariant symplectic homology:
$$
SH_*^{S^1,N}(W_0,\omega):=\varinjlim_{H\in \mathcal{H}_{N,{\rm
reg}}^{S^1}} SH_*^{S^1,N}(H).
$$
To complete the construction take the direct limit over $N$,
$$
SH_*^{S^1}(W_0,\omega):=\varinjlim_{N} SH_*^{S^1,N}(W_0,\omega).
$$

\subsection{Subcomplexes and relation to symplectic homology}
Instead of taking the direct limits, we can also first investigate
subcomplexes. For sufficiently small $\epsilon >0$ define the
subcomplex
$$
SC_*^{S^1,-,N}(H,J,g):= \bigoplus_{\substack{S_p \subset \mathcal
P^0(H) \\ \mathcal A(p)\leq \epsilon}} \Q \langle S_p \rangle .
$$
This leads to the quotient complex
$$
SC_*^{S^1,+,N}(H,J,g):=SC_*^{S^1,N}(H,J,g)/SC_*^{S^1,-,N}(H,J,g).
$$
For either of these groups we can define direct limits over $H$
and $N$ as in the two steps for $SH_*^{S^1}(W_0,\omega)$, leading to the
homology groups $SH_*^{S^1,\pm}(W_0,\omega)$.

Equivariant symplectic homology is related to non-equivariant
symplectic homology through a Gysin sequence. We have
$$
\ldots \longrightarrow SH^b_*(W_0,\omega) \longrightarrow
SH^{S^1,b}_* (W_0,\omega)  \longrightarrow SH^{S^1,b}_{*-2}
(W_0,\omega) \longrightarrow SH^b_{*-1}(W_0,\omega)
\longrightarrow \ldots
$$
Here $b$ is used to denote any of the three types of complexes: the full complex, the $-$ complex and the $+$ complex.

\subsection{Homological boundedness, index positivity and exact sequences}

Assume that $(W,\omega=d\lambda)$ is a compact exact symplectic
manifold, i.e. $\omega=d\lambda$ is a symplectic form on $W$, with
convex boundary $\partial W$. We assume that the first Chern class
$c_1(W)$ of $(W,\omega)$ is a torsion class.

\begin{definition}
We say $(W,\omega)$ is {\bf homologically bounded} if there exists
$C>0$ such that $b_i(W,\omega)=\dim (SH_i^{S^1,+}(W,\omega))<C$
for all $i\in\Z$.
\end{definition}

\begin{definition}
We say that a cooriented contact manifold $(\Sigma,\alpha)$ is
{\bf index-positive} if the mean index $\Delta(\gamma)$ of every contractible, periodic Reeb orbit~$\gamma$ is positive.
Similarly, we say that $(\Sigma,\alpha)$ is
{\bf index-negative} if the mean index $\Delta(\gamma)$ of every contractible, periodic Reeb orbit $\gamma$ is negative.
Finally, we say that $(\Sigma,\alpha)$ is {\bf index-definite}
if it is index-positive or index-negative.
\end{definition}
Recall that
the {\bf mean index} $\Delta$ is related to the Conley--Zehnder index $\mu_{CZ}$ as follows:
For any non-degenerate Reeb orbit $\gamma$ in a contact manifold $(\Sigma^{2n-1},\alpha)$, its $N$-fold cover $\gamma^N$ satisfies
\begin{equation}
\label{eq:iteration_and_mean_index}
\mu_{CZ}(\gamma^N) \,=\, N \Delta(\gamma) + e(N),
\end{equation}
where $e(N)$ is an error term bounded by~$n-1$, see~\cite[Lemma
3.4]{SalamonZehnder:Morse}.

There is also a homological version of this notion. 
\begin{definition}
We shall call
a homology $H_*(C_*,\partial)$ {\bf index-positive} if there
exists $N$ such that $H_i(C_*,\partial)=0$ for all $i<N$.
\end{definition}
If $(\Sigma,\alpha=i_{X_{Liouville}} d\lambda)=\partial(W,d\lambda)$ is
index-positive in the previously defined sense, and if the inclusion of $\Sigma$ into $W$ induces an injection on $\pi_1$, then
$SH^{S^1,+}_*(W,d\lambda)$ is index-positive in the homological
sense.
One way to see this, is to use a spectral sequence argument similar to the proof of Proposition~\ref{prop:mean_euler_S^1-orbibundle}.
The notions index-negative and index-definite are defined
on homology level in a similar way.

We observe that for a compact subcritical Stein manifold
$(W^{2n},\omega)$ with torsion first Chern class,
$SH_*^{S^1,+}(W,\omega)$ is index-positive. To see this, we consider
Corollary~1.3 from
\cite{Bourgeois:Gysin_S1_equivariant_symplectic_homology} which
states that there is an isomorphism of exact sequences,
\[
\entrymodifiers={+!!<0pt,\fontdimen10\textfont2>}
\xymatrix{%@R=10pt@C=10pt{
\cdots \ar[r] & SH^{+}_*(W,\omega) \ar[r] \ar[d]^{\cong}&
SH^{S^1,+}_*(W,\omega) \ar[r] \ar[d]^{\cong} &
SH^{S^1,+}_{*-2}(W,\omega) \ar[r] \ar[d]^{\cong} &
SH^{+}_{*-1}(W,\omega) \ar[r] \ar[d]^{\cong} & \cdots \\
\cdots \ar[r] & H_{*+n-1}(W,\partial W) \ar[r] &
H^{S^1}_{*+n-1}(W,\partial W) \ar[r] & H^{S^1}_{*+n-3}(W,\partial
W) \ar[r] &
H_{*+n-2}(W,\partial W) \ar[r] & \cdots \\
}
\]
As the equivariant homology of $(W,\partial W)$ is index-positive, we see that $SH_*^{S^1,+}(W,\omega)$ is index-positive.

\subsection{Euler characteristic and mean Euler characteristic}
To simplify computations we shall use the mean Euler
characteristic of the positive part of the $S^1$-equivariant
symplectic homology. This number can be computed explicitly for
certain classes of symplectic manifolds. Furthermore, it can be
used to detect the non-existence of subcritical fillings, see Proposition~\ref{proposition:mean_euler_subcritical}, and also
serves as an obstruction against the existence of displaceable
exact contact embeddings, see
\cite{Frauenfelder:mean_euler_characteristic_displaceability}.

Let $(W,\omega)$ be a compact exact symplectic manifold with
contact type boundary. Suppose that $(W,\omega)$ is homologically
bounded. We define the {\bf mean Euler characteristic} of
$(W,\omega)$ as
$$
\chi_m(W,\omega)=\frac{1}{2}\left(\liminf_{N\to \infty}
\frac{1}{N}\sum_{i=-N}^N(-1)^i b_{i}(W,\omega)+\limsup_{N\to
\infty}\frac{1}{N}\sum_{i=-N}^N(-1)^i b_{i}(W,\omega)\right).
$$
The uniform bound on $b_i(W,\omega)$ implies that the limit
inferior and the limit superior exist. See also
\cite{vanKoert:thesis}, \cite{Ginzburg:homological_resonances} and
\cite{Espina:mean_euler_characteristic}. In some cases, the mean
Euler characteristic is independent of the filling, that it can be computed in terms of data on $\partial W$ only, see \cite{Frauenfelder:mean_euler_characteristic_displaceability}, which allows us to use
this number as a \emph{contact invariant}. For later applications,
the main observation is that the mean Euler characteristic of
compact subcritical Stein manifolds is always a half-integer,
\begin{proposition}
\label{proposition:mean_euler_subcritical} 
Let $(Y^{2n-1},\xi=\ker
\alpha)$ be a contact manifold with subcritical filling
$(W^{2n},\omega)$ such that $c_1(W)$ is a torsion class. Then
$SH_*^{S^1,+}(W,\omega)\cong H_{*+n-1}(W,\partial W;\Q) \otimes H_*( \C P^\infty;\Q)$.
In particular, it is index-positive with generators in arbitrarily large degrees.
Furthermore,
$$
\chi_{m}(W,\omega)=\frac{(-1)^{n+1} \chi(W)}{2}.
$$
\end{proposition}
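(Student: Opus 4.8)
The plan is to reduce the statement to the Bourgeois--Oancea isomorphism of exact sequences displayed just above; after that the argument is linear algebra over $\Q$ together with one elementary convergence estimate.

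First I would feed the hypotheses — subcritical Stein filling and $c_1(W)$ torsion — into that isomorphism, which reads off $SH^{S^1,+}_*(W,\omega)\cong H^{S^1}_{*+n-1}(W,\partial W)$, the right-hand side being the Borel-equivariant homology of the pair $(W,\partial W)$ for the $S^1$-action that rotates loops. On the constant loops modelling the filling this action is trivial, so $W\times_{S^1}ES^1\simeq W\times BS^1$ and the K\"unneth theorem gives $H^{S^1}_{*+n-1}(W,\partial W)\cong H_{*+n-1}(W,\partial W;\Q)\otimes_\Q H_*(\C P^\infty;\Q)$ — equivalently, the bottom Gysin sequence splits into short exact sequences because $H_*(\C P^\infty;\Q)=\Q[u]$ is free. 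This is the asserted isomorphism. Writing $\beta_m:=\dim_\Q H_m(W,\partial W;\Q)$, it yields the convenient formula $b_i(W,\omega)=\sum_{k\ge 0}\beta_{i+n-1-2k}$.

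The qualitative claims then follow by inspection. Since $W$ is a compact $2n$-manifold with boundary, $\beta_m=0$ unless $0\le m\le 2n$, so each $b_i(W,\omega)$ is a finite sum of the $\beta_m$ (in particular $(W,\omega)$ is homologically bounded) and $b_i(W,\omega)=0$ once $i$ is sufficiently negative, which is index-positivity. For generators in arbitrarily large degree, note $H_{2n}(W,\partial W;\Q)\cong H^0(W;\Q)\neq 0$ (assume $W$ connected, otherwise argue componentwise), so the term with $i+n-1-2k=2n$ forces $SH^{S^1,+}_{n+1+2k}(W,\omega)\neq 0$ for every $k\ge 0$.

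Finally, for the mean Euler characteristic I would compute $\sum_{i=-N}^{N}(-1)^i b_i(W,\omega)$ by inserting the formula above, interchanging the two finite sums, and substituting $m=i+n-1-2k$; the inner sum over $m$ runs over $2N+1$ consecutive integers and equals $(-1)^{n-1}\sum_{m=0}^{2n}(-1)^m\beta_m=(-1)^{n-1}\chi(W,\partial W)$ for every $k$ except $O(1)$ boundary values, while the $k$'s for which the window of summation actually contains $\{0,\dots,2n\}$ form an interval of length $\tfrac N2+O(1)$. Dividing by $N$ and letting $N\to\infty$ therefore produces the genuine limit $(-1)^{n-1}\tfrac12\chi(W,\partial W)$; since $\partial W$ is closed and odd-dimensional, $\chi(\partial W)=0$, so $\chi(W,\partial W)=\chi(W)$, and because this is an honest limit the $\liminf$ and $\limsup$ coincide with it, giving $\chi_m(W,\omega)=(-1)^{n+1}\chi(W)/2$. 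The one genuinely delicate point is the very first step — being certain that the equivariant homology in the Bourgeois--Oancea sequence is the Borel homology of $(W,\partial W)$ with the \emph{trivial} circle action, so that the $H_*(\C P^\infty;\Q)$-factor splits off; this is exactly what Cieliebak's splitting theorem (via subcriticality) provides, and everything downstream is bookkeeping.
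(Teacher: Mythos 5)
Your argument is correct, and it reaches the conclusion by a route that differs from the paper's in its second half. You take the Bourgeois--Oancea isomorphism of Gysin sequences (Corollary~1.3 of \cite{Bourgeois:Gysin_S1_equivariant_symplectic_homology}, the diagram displayed in Section~\ref{sec:s1eq}) as the single input, read off $SH^{S^1,+}_*(W,\omega)\cong H^{S^1}_{*+n-1}(W,\partial W)$ with trivial circle action, apply K\"unneth over $\Q$, and then compute $\chi_m$ by the explicit window-counting limit $\frac1N\sum_{i=-N}^N(-1)^ib_i\to\frac12(-1)^{n+1}\chi(W,\partial W)=\frac12(-1)^{n+1}\chi(W)$; your bookkeeping (roughly $N/2$ full windows each contributing $(-1)^{n+1}\chi(W,\partial W)$, plus $O(1)$ boundary terms) is sound, as are the deductions of homological boundedness, index-positivity, and nonvanishing in degrees $n+1+2k$ from $H_{2n}(W,\partial W;\Q)\neq 0$. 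The paper instead first identifies $SH^{+}_*(W,\omega)\cong H_{*+n-1}(W,\partial W)$ via Cieliebak's vanishing theorem \cite{Cieliebak:vanishingSH} and Viterbo's exact sequence, and then feeds index-positivity of $SH^{S^1,+}_*$ into the abstract Lemma~\ref{lemma:meanGysin}, whose telescoping argument only uses the Gysin sequence, uniform bounds, and index-definiteness. What each approach buys: your computation is more self-contained and actually establishes the explicit tensor-product isomorphism stated in the first sentence of the proposition (which the paper's written proof leaves implicit in the citation of Bourgeois--Oancea), whereas the paper's Lemma~\ref{lemma:meanGysin} is a reusable tool that yields $\chi_m=\pm\chi/2$ in situations where one only knows index-definiteness and boundedness, without an explicit module description. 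One small correction of attribution: the ``delicate point'' you flag -- that the equivariant term in the Bourgeois--Oancea sequence is Borel homology of $(W,\partial W)$ for the trivial action -- is part of the statement of their Corollary~1.3 for subcritical Stein domains; the Cieliebak input behind it (and behind the paper's proof) is the vanishing of symplectic homology \cite{Cieliebak:vanishingSH} rather than the splitting theorem \cite{Cieliebak:Stein_split}, although the two are of course closely related through subcriticality.
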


\begin{lemma}
\label{lemma:meanGysin}
Suppose we have a Gysin style exact sequence for $H_*(B)$ and $H_*^{S^1}(B)$ of the form
\[
\entrymodifiers={+!!<0pt,\fontdimen10\textfont2>}
\xymatrix{%@R=10pt@C=10pt{
\cdots \ar[r] &
H_*(B) \ar[r] &
H^{S^1}_*(B) \ar[r]  &
H^{S^1}_{*-2}(B) \ar[r] &
H_{*-1}(B) \ar[r] & \cdots \\
}
\]
Suppose furthermore that $H_i(B)$ is finite dimensional for all
$i\in\Z$, there exists a positive integer $N_0$ such that
$H_i(B)=0$ for all $i>N_0$ and all $i<-N_0$, $\dim (H_i^{S^1}(B))$
are uniformly bounded, i.e. there exists $C>0$ such that $\dim
(H_i^{S^1}(B))<C$ for all $i\in\Z$, and $H_*^{S^1}(B)$ is
index-definite. Then
$$
\chi_m(H_*^{S^1}(B)\, )=\pm \frac{\chi(H_*(B)\, )}{2},
$$
where one should take a $+$ sign if $H_*^{S^1}(B)$ is
index-positive, and a $-$ sign if $H_*^{S^1}(B)$ is
index-negative. The definition of $\chi_m(H_*^{S^1}(B)\, )$ is
similar to that of $\chi_{m}(W,\omega)$ (replacing $b_i(W,\omega)$
by $\dim (H_i^{S^1}(B))$ in that definition).
\end{lemma}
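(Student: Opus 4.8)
The plan is to extract from the Gysin-style long exact sequence a relation, in each fixed degree, between the Euler characteristics of $H_*(B)$ and of $H^{S^1}_*(B)$, and then average. First I would write out the long exact sequence and observe that it splits, for each integer $j$, into the familiar $3$-periodic pattern
\[
\cdots \to H_j(B) \xrightarrow{\;\iota_j\;} H^{S^1}_j(B) \xrightarrow{\;D_j\;} H^{S^1}_{j-2}(B) \xrightarrow{\;\delta_j\;} H_{j-1}(B) \to \cdots,
\]
so that exactness gives, for every $j$, the identity $\dim H_j(B) = \operatorname{rk}\iota_j + \operatorname{rk}\delta_{j+1}$ and $\dim H^{S^1}_j(B) = \operatorname{rk}\iota_j + \operatorname{rk}D_j$, together with $\dim H^{S^1}_{j-2}(B) = \operatorname{rk}D_j + \operatorname{rk}\delta_j$. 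The finiteness hypotheses — $H_i(B)$ finite-dimensional and vanishing for $|i|>N_0$, and $\dim H^{S^1}_i(B)$ uniformly bounded — guarantee that all these ranks are finite, so the bookkeeping is legitimate.

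Next I would form the alternating sum $\sum_{i=-N}^{N}(-1)^i \dim H^{S^1}_i(B)$ and substitute $\dim H^{S^1}_i(B)=\operatorname{rk}\iota_i+\operatorname{rk}D_i$. The $\operatorname{rk}\iota_i$ terms, combined with $\operatorname{rk}\delta_{i+1}$ coming from the boundary maps, telescope against the expression $\dim H_i(B)=\operatorname{rk}\iota_i+\operatorname{rk}\delta_{i+1}$; since $H_i(B)=0$ for $|i|>N_0$, the $\iota$- and $\delta$-contributions assemble (up to a bounded boundary error) into $\pm\chi(H_*(B))$. The remaining terms involve $\sum_i (-1)^i \operatorname{rk}D_i$, and here the degree shift by $2$ in $D_i\colon H^{S^1}_i \to H^{S^1}_{i-2}$ preserves parity, so these terms produce a contribution that telescopes to something bounded independently of $N$ — this is where index-definiteness enters: it forces $H^{S^1}_i(B)$ (hence $D_i$) to vanish for $i$ below some threshold (index-positive case) or above some threshold (index-negative case), which kills the would-be unbounded tail of the $D$-sum and pins down the sign. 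Dividing by $N$ and letting $N\to\infty$, the bounded error terms wash out and one is left with $\chi_m(H^{S^1}_*(B)) = \pm \tfrac{1}{2}\chi(H_*(B))$, with the sign dictated by whether $H^{S^1}_*(B)$ is index-positive or index-negative.

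The main obstacle is organizing the telescoping cleanly: because the Gysin sequence mixes $H_*(B)$ in degree $i$ with $H^{S^1}_*(B)$ in degrees $i$ and $i-2$, a naive index-by-index cancellation does not close up, and one must carefully track the ``frontier'' terms near $\pm N$ and show their contribution is $O(1)$ rather than $O(N)$. The uniform bound $\dim H^{S^1}_i(B)<C$ is exactly what controls these frontier terms; combined with the vanishing of $H_*(B)$ outside $[-N_0,N_0]$ and the one-sided vanishing of $H^{S^1}_*(B)$ from index-definiteness, every error term is bounded by a constant depending only on $C$ and $N_0$, so it disappears after dividing by $N$. Once that is in place, the computation of $\liminf$ and $\limsup$ is identical and the averaging in the definition of $\chi_m$ is a formality. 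A cosmetic point worth noting is that the factor $\tfrac12$ in $\chi_m$ is precisely what matches the ``two copies'' of $\chi(H_*(B))$ that the $2$-periodicity of the $S^1$-direction would otherwise produce, which is the conceptual reason the answer is $\pm\chi(H_*(B))/2$ rather than $\pm\chi(H_*(B))$.
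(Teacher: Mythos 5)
Your overall strategy (rank bookkeeping in the Gysin sequence, alternating sums over a window $[-N,N]$, index-definiteness to control one tail, divide by $N$) is in the same spirit as the paper's proof, but the central accounting step is wrong, and as written your computation would output $\chi_m=0$ rather than $\pm\chi(H_*(B))/2$. Concretely: after substituting $\dim H^{S^1}_i(B)=\rk\iota_i+\rk D_i$, the term $\sum_{i=-N}^N(-1)^i\rk D_i$ is \emph{not} bounded independently of $N$ --- it is the main term. Indeed, in the index-positive case, for every $i$ with $H_i(B)=H_{i-1}(B)=0$ (e.g.\ $i>N_0+1$) exactness makes $D_i\colon H^{S^1}_i(B)\to H^{S^1}_{i-2}(B)$ an isomorphism, so $\dim H^{S^1}_i(B)$ is eventually $2$-periodic in $i$, with values $a$ in even and $b$ in odd degrees and $a-b=\chi(H_*(B))$; consequently $\rk D_i$ also takes the values $a,b$, and $\sum_i(-1)^i\rk D_i$ over a window of length $2N$ grows like $N\,\chi(H_*(B))$, unbounded precisely in the interesting case $\chi\neq0$ (index-positivity kills only the lower tail, not this growth). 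Conversely, if your bookkeeping were correct --- one copy of $\pm\chi(H_*(B))$ from the $\iota$/$\delta$ terms, bounded frontier errors, and a bounded $D$-contribution --- then $\frac1N\sum_{i=-N}^N(-1)^i\dim H^{S^1}_i(B)$ would tend to $0$, contradicting your own conclusion; the linear-in-$N$ growth that produces $\pm\chi/2$ is nowhere generated in your argument, and the closing remark about ``two copies'' does not supply it.

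The repair is what the paper does: add the three alternating sums attached to the truncated exact sequence (equivalently, combine your rank identities with the reindexing $\sum_{i=-N}^N(-1)^i\dim H^{S^1}_{i-2}(B)=\sum_{j=-N-2}^{N-2}(-1)^j\dim H^{S^1}_j(B)$); everything cancels except $\chi(H_*(B))$ and four boundary terms in degrees $N-1$, $N$, $-N-1$, $-N-2$. Index-definiteness annihilates two of these (the lower two in the index-positive case), leaving the relation $\chi(H_*(B))=(-1)^{N-1}\dim H^{S^1}_{N-1}(B)+(-1)^{N}\dim H^{S^1}_{N}(B)$ for \emph{every} sufficiently large $N$. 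It is this relation, holding degree by degree --- i.e.\ the eventual $2$-periodicity of $\dim H^{S^1}_*(B)$ with alternating difference $\chi(H_*(B))$ --- that makes the partial alternating sums grow like $N\chi(H_*(B))/2$ and hence gives $\chi_m=+\chi(H_*(B))/2$; the index-negative case is identical with the two tails exchanged, producing the minus sign. So your ingredients are the right ones, but the argument must exploit this recursion for all large $N$ rather than a one-shot telescoping of the whole sum.
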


\begin{proof}
First of all, observe that the conditions on $H_*(B)$ guarantee
that $\chi(H_*(B)\,)$ exists. Secondly, for $N>N_0$ we have
\[
\begin{split}
0&=\sum_{i=-N}^{N}(-1)^i \dim (H_i(B)) +\sum_{i=-N}^{N}(-1)^{i+1} \dim (H_i^{S^1}(B)) +\sum_{i=-N}^{N}(-1)^i \dim (H_{i-2}^{S^1}(B))\\
&=\chi( H_*(B)\,)+(-1)^{N} \dim (H_{N-1}^{S^1}(B)) +(-1)^{N+1}
\dim
(H_{N}^{S^1}(B))\\
&+(-1)^{-N}\dim (H_{-N-2}^{S^1}(B))+(-1)^{-N+1}\dim
(H_{-N-1}^{S^1}(B)).
\end{split}
\]

Since we assume that $H^{S^1}_*(B)$ is index-definite, we have two cases to consider.
\begin{enumerate}
\item $H^{S^1}_*(B)$ is index-positive.
Then for sufficiently large $N$ we have
$$
\chi(H_*(B)\,)=(-1)^{N-1} \dim (H_{N-1}^{S^1}(B))+(-1)^{N} \dim
(H_{N}^{S^1}(B)).
$$
Hence
\[
\begin{split}
\chi_m(H^{S^1}_*(B)\,)&=\lim_{N \to \infty} \frac{1}{N} \sum_{i=-N}^{N}(-1)^i \dim (H_i^{S^1}(B))\\
&= \lim_{N \to \infty} \frac{1}{N} \sum_{i=N_0}^{N}(-1)^i \dim
(H_i^{S^1}(B)) =\frac{\chi(H_*(B)\, )}{2}.
\end{split}
\]
\item The proof in the index-negative case is very similar, but
there is a sign change since now for sufficiently large $N$ we
have
$$
\chi(H_*(B)\,)=(-1)^{-N-1} \dim (H_{-N-2}^{S^1}(B))+(-1)^{-N} \dim
(H_{-N-1}^{S^1}(B)).
$$
\end{enumerate}

\end{proof}

\begin{proof}[Proof of Proposition~\ref{proposition:mean_euler_subcritical}]
Observe that $SH_*(W,\omega)$ vanishes by a result of Cieliebak
\cite{Cieliebak:vanishingSH}. Hence the Viterbo long exact
sequence, see \cite{Viterbo:Floer_applications}, reduces to
$$
0\cong SH_*(W,\omega) \longrightarrow SH^+_{*}(W,\omega)
\longrightarrow H_{*+n-1}(W,\partial W) \longrightarrow
SH_{*-1}(W,\omega)\cong 0.
$$
Furthermore, if $c_1(W)$ is a torsion class, then it follows from
the preceding section that $SH_*^{S^1,+}(W,\omega)$ is
index-positive. Hence Lemma~\ref{lemma:meanGysin} applies and we obtain
\[
\begin{split}
\chi_m(W,\omega)& =\chi_m(SH_*^{S^1,+}(W,\omega)\,)=\frac{
\chi(SH_*^{+}(W,\omega)\,)}{2}=(-1)^{n-1}\frac{\chi ( H_{*+n-1}(W,\partial
W)\,)}{2}\\
&=(-1)^{n-1}\frac{\chi(W)}{2}.
\end{split}
\]
\end{proof}

\begin{remark}
This proposition can also be proved by using Yau's results on the
contact homology of subcritically fillable contact manifolds,
see~\cite{Yau:subcritical}. Alternatively, one can use Espina's
argument, see \cite[Corollary
5.7]{Espina:mean_euler_characteristic}, which tells us that
subcritical surgery changes the mean Euler characteristic by $\pm
\frac{1}{2}$. Since the mean Euler characteristic of
$(S^{2n-1},\xi_0)=\partial (D^{2n},\omega_0)$ is
$\frac{(-1)^{n+1}}{2}$, the result follows by successive handle
attachments.

Also observe that the above result holds true for a set bounded by a displaceable contact embedding.
In such a case, one can apply \cite[Theorem 97]{Ritter:topological_quantum_field_SH}.
See also \cite{Frauenfelder:mean_euler_characteristic_displaceability}.
\end{remark}

\begin{remark}
Note that grading conventions in contact homology differ from the ones in symplectic homology.
As a result we have the sign $(-1)^{n+1}$.
\end{remark}

We shall now consider the case of Boothby--Wang orbibundles.
By introducing the notion of Morse-Bott contact form we can avoid using perturbations of the contact form.
\begin{definition}
A contact form $\alpha$ on $\Sigma$ is said to be of {\bf
Morse--Bott type} if the following hold:
\begin{itemize}
\item The action spectrum $\Spec  (\alpha)$ is discrete. \item For
every $T\in \Spec (\alpha)$, $N_{T}=\{ p\in \Sigma|
Fl^{R_\alpha}_{T}(p)=p\}$ is a smooth submanifold of $\Sigma$ such
that the rank $d\alpha|_{N_{T}}$ is locally constant and
$T_pN_{T}=\ker (TFl^{R_\alpha}_{T}-id)_p$.
\end{itemize}
\end{definition}
To avoid orientation problems of the moduli spaces, we need the notion of bad orbit in Morse-Bott sense. Let $N_T$ denote the submanifold consisting of periodic orbits with period $T$, that is
$$
N_T=\{ p \in \Sigma ~|~Fl^{R_\alpha}_T(p)=p \}
$$
For a periodic Reeb orbit $\gamma\subset N_T$, the Maslov index is independent of the choice of $\gamma$. Hence we write $\mu(N_T)$ rather than $\mu(\gamma)$.

\begin{definition}
\label{def:bad_orbit}
A Reeb orbit $\gamma$ of period $T$ is called {\bf bad} if it is the $2m$ fold cover of a simple orbit $\gamma'$ and $\mu(N_T)-\frac{1}{2 }\dim \left( N_T/S^1 \right) -\mu( N_{\frac{T}{2m}})+\frac{1}{2}\dim \left( N_{\frac{T}{2m}}/S^1 \right)$ is odd.
\end{definition}

We introduce some notation to state the result.
Consider a contact manifold $(\Sigma,\alpha)$ with Morse--Bott
contact form $\alpha$ such that all Reeb orbits are periodic, so that
we have an $S^1$-action on $\Sigma$ (not necessarily free). Denote
the minimal periods by $T_1< \ldots< T_k$, so all $T_i$ divide~$T_k$.
As before, denote the subspace consisting of periodic Reeb orbits
with period $T_i$ in $\Sigma$ by~$N_{T_i}$. For the proof of the
following lemma, see
\cite{Frauenfelder:mean_euler_characteristic_displaceability}.

\begin{lemma}
\label{lemma:H^1_trivial} If $H^1(N_{T_i};\Z_2)=0$, then
$H^1(N_{T_i} \times_{S^1} ES^1; \Z_2)=0$.
\end{lemma}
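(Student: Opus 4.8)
The plan is to compare $N:=N_{T_i}$ with its homotopy quotient by means of the Borel fibration
$$
N \hookrightarrow N\times_{S^1}ES^1 \xrightarrow{\ \pi\ } BS^1,
$$
and to read off $H^1$ of the total space from the Leray--Serre spectral sequence of this fibration with $\Z_2$-coefficients. Since $BS^1\simeq \C P^\infty$ is simply connected, the local coefficient system on the base is trivial, so the $E_2$-page is simply
$$
E_2^{p,q}=H^p(\C P^\infty;H^q(N;\Z_2))=H^p(\C P^\infty;\Z_2)\otimes_{\Z_2} H^q(N;\Z_2).
$$

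First I would observe that $H^1(N\times_{S^1}ES^1;\Z_2)$ carries a filtration whose associated graded pieces are the two terms $E_\infty^{1,0}$ and $E_\infty^{0,1}$ on the antidiagonal $p+q=1$. Each $E_\infty^{p,q}$ is a subquotient of $E_2^{p,q}$, so it suffices to check that both of these $E_2$-terms vanish. The term $E_2^{1,0}=H^1(\C P^\infty;\Z_2)$ is zero because the $\Z_2$-cohomology of $\C P^\infty$ is the polynomial ring $\Z_2[u]$ with $|u|=2$, hence is concentrated in even degrees. The term $E_2^{0,1}=H^0(\C P^\infty;H^1(N;\Z_2))=H^1(N;\Z_2)$ is zero by the hypothesis $H^1(N_{T_i};\Z_2)=0$. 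Therefore $E_\infty^{1,0}=E_\infty^{0,1}=0$, and hence $H^1(N\times_{S^1}ES^1;\Z_2)=0$.

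Equivalently, and even more directly, one may invoke the five-term exact sequence in cohomology attached to the fibration,
$$
0\longrightarrow H^1(\C P^\infty;\Z_2)\longrightarrow H^1(N\times_{S^1}ES^1;\Z_2)\longrightarrow H^1(N;\Z_2),
$$
in which the outer two groups vanish for the reasons just given, forcing the middle group to vanish as well. The only point that needs any care is the justification that the spectral sequence (or the five-term sequence) may be used with untwisted fiber coefficients, but this is immediate from the simple-connectivity of $BS^1$, so there is in fact no real obstacle here: the statement is a formal consequence of the structure of $H^*(\C P^\infty;\Z_2)$ together with the vanishing hypothesis on $H^1(N_{T_i};\Z_2)$.
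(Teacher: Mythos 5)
Your argument is correct: the Borel construction $N_{T_i}\times_{S^1}ES^1$ fibers over $BS^1\simeq\C P^\infty$ with fiber $N_{T_i}$, the base is simply connected so the coefficients are untwisted, and the two terms $E_2^{1,0}=H^1(\C P^\infty;H^0(N_{T_i};\Z_2))$ and $E_2^{0,1}=H^0(\C P^\infty;H^1(N_{T_i};\Z_2))$ both vanish, killing $H^1$ of the total space; the five-term sequence says the same thing. (One cosmetic point: writing $E_2^{1,0}=H^1(\C P^\infty;\Z_2)$ tacitly assumes $N_{T_i}$ connected, but the term vanishes for any coefficient group, so nothing is lost.) Note, however, that the paper itself does not prove this lemma at all; it refers the reader to \cite{Frauenfelder:mean_euler_characteristic_displaceability}, so there is no in-paper proof to match. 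Compared with the most elementary route one could take there --- apply the homotopy exact sequence of the fibration $S^1\to N_{T_i}\times ES^1\to N_{T_i}\times_{S^1}ES^1$ to see that $\pi_1$ of the Borel construction is a quotient of $\pi_1(N_{T_i})$, and then use $H^1(X;\Z_2)\cong\mathrm{Hom}(\pi_1(X),\Z_2)$ --- your spectral-sequence (or five-term) argument is slightly heavier machinery but equally short, and it has the advantage of working verbatim with any coefficient group and of being the same tool already used elsewhere in the paper (the Morse--Bott spectral sequence), so it fits the context well.
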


The proof of the following proposition is similar to that of
\cite[proposition 2.4]{Frauenfelder:mean_euler_characteristic_displaceability}. The main difference is that we need to keep track of the homotopy class of periodic orbits.
For later use the following definition is useful. 
Let $(P^{2n-1},\alpha)$ be a cooriented contact manifold whose Reeb flow is periodic. Assume furthermore that the following holds.
\begin{enumerate}
\item[P1] there is a connected set of exceptional orbits $N_{T_1}$ with period $T_1$. The principal orbits, denoted by $N_{T_2}$, have period $T_2$.
\item[P2] $\pi_1(P)\cong \Z_k$, generated by a simple exceptional orbit. Furthermore, $c_1(\xi)$ is torsion.
\item[P3] The Maslov index of the smallest contractible cover of a principal orbit, denoted by $\mu_P$, is non-zero. 
\end{enumerate}
Write $N:=T_2/T_1$, and put $\ell=\gcd(N,k)$.
Define the {\bf mean Euler characteristic} of $(P,\alpha)$ by the number
\begin{equation}
\label{eq:definition_mean_euler_contact}
\chi_m(P,\alpha ) =(-1)^{n+1}
\frac{(\frac{N}{\ell}-1)\chi^{S^1}(N_{T_1})+\chi^{S^1}(N_{T_2})}{|\mu_P|}.
\end{equation}
Here $\chi^{S^1}(N_T)$ denotes the Euler characteristic of the
$S^1$-equivariant homology of the $S^1$-manifold~$N_T$.
In general, this is a meaningless number, but the following proposition shows that it is a contact invariant provided that there is a suitable filling.
\begin{proposition}
\label{prop:mean_euler_S^1-orbibundle}
Let $(P^{2n-1},\xi=\ker \alpha)$ be a cooriented contact manifold
satisfying the following conditions:
\begin{enumerate}
\item[P1-3] The conditions P1, P2 and P3 hold. In particular $\mu(N_{\frac{Nk}{\ell}T_1}=N_{\frac{k}{\ell}T_2})=\mu_P$. 
\item[P4] Furthermore  $H^1(N_T\times_{S^1} ES^1; \Z_2)=0$ for all
$N_T$ and there are no bad orbits.
\item[P5] There is an exact
filling $(W^{2n},d\lambda)$ such that the inclusion $P \to W$ induces an injection on $\pi_1$. 
In addition, $c_1(W)$ is torsion.
\end{enumerate}
Then the mean Euler characteristic of equivariant symplectic
homology in the class of contractible orbits is given by
$$
\chi_m( SH^{S^1,+}_*(W,d\lambda )\, ) =\chi_m(P,\alpha).
$$
\end{proposition}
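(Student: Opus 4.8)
The plan is to compute both sides explicitly and match them. The right-hand side $\chi_m(P,\alpha)$ is defined by the combinatorial formula \eqref{eq:definition_mean_euler_contact} in terms of the $S^1$-equivariant Euler characteristics of $N_{T_1}$ and $N_{T_2}$ and the Maslov index $\mu_P$, so the real work is on the left: I must show that $SH^{S^1,+}_*(W,d\lambda)$, restricted to the contractible class, is index-definite with uniformly bounded Betti numbers, and then read off its mean Euler characteristic from Morse--Bott data on $P$.

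First I would set up the Morse--Bott spectral sequence for $SH^{S^1,+}$ in the spirit of \cite[Proposition 2.4]{Frauenfelder:mean_euler_characteristic_displaceability}, but tracking free homotopy classes of orbits: since we only want the contractible sector, I restrict attention to those towers of periodic orbits $N_{T}$ whose relevant cover is contractible in $W$ (this is where P5, the $\pi_1$-injectivity, is used — it lets me detect contractibility in $W$ from contractibility in $P$, and pins down which iterates of the exceptional orbit $N_{T_1}$ and the principal orbit $N_{T_2}$ contribute). By P2 the exceptional orbit generates $\pi_1(P)\cong\Z_k$, so the smallest contractible iterate of a principal orbit is its $(k/\ell)$-fold cover, sitting in $N_{\frac{k}{\ell}T_2}=N_{\frac{Nk}{\ell}T_1}$, with Maslov index $\mu_P$ by P1--3; likewise the smallest contractible iterate of an exceptional orbit is its $(Nk/\ell)$-fold cover. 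Using the standard Morse--Bott index formula, the page of the spectral sequence in the contractible class splits into a piece coming from iterates of the exceptional tower (contributing $H^{S^1}_*(N_{T_1})$, suitably shifted, once for each contractible iterate) and a piece from the principal tower (contributing $H^{S^1}_*(N_{T_2})$); between two consecutive contractible exceptional iterates there are exactly $\frac{N}{\ell}-1$ contractible principal iterates, which is the origin of the coefficient in \eqref{eq:definition_mean_euler_contact}.

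Next, P3 (that $\mu_P\neq 0$) together with the iteration formula \eqref{eq:iteration_and_mean_index} forces the mean index of the contractible principal orbit to be $\mu_P/(k/\ell)\neq 0$, hence index-definiteness: the generators in the contractible class have Conley--Zehnder index tending to $+\infty$ if $\mu_P>0$ and to $-\infty$ if $\mu_P<0$. Condition P4 ($H^1(N_T\times_{S^1}ES^1;\Z_2)=0$ and no bad orbits) guarantees that each $H^{S^1}_*(N_T)$ is concentrated in the right parity so that the Euler characteristics add up without cancellation across the spectral sequence, and that the differentials do not spoil the count — this is exactly the hypothesis that makes the Morse--Bott argument of Frauenfelder go through, and it also gives the uniform bound on $\dim SH^{S^1,+}_i$. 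Once index-definiteness and boundedness are in hand, the mean Euler characteristic only sees the Euler characteristics of the Morse--Bott families: summing $(-1)^{\deg}$ over one ``period'' of the tower (one contractible exceptional iterate plus the $\frac{N}{\ell}-1$ intervening principal iterates) and dividing by the index gap $|\mu_P|$ gives precisely \eqref{eq:definition_mean_euler_contact}, where the overall sign $(-1)^{n+1}$ comes from the degree shift by $n-1$ relating the Morse--Bott grading to the symplectic-homology grading (cf.\ the sign in Proposition~\ref{proposition:mean_euler_subcritical}).

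The main obstacle is the bookkeeping of homotopy classes in the Morse--Bott spectral sequence: one must be careful that the contractible sector of $SH^{S^1,+}(W)$ really decomposes as described, that the injectivity in P5 is genuinely enough to identify the contractible iterates (rather than needing $\pi_1(P)\hookrightarrow\pi_1(W)$ to be an isomorphism), and that the ``no bad orbits'' hypothesis in P4 is invoked correctly so that signs in the differential do not produce unexpected cancellations in the alternating sum. Everything else is a routine adaptation of the filtration/spectral-sequence machinery already established in \cite{Frauenfelder:mean_euler_characteristic_displaceability} and \cite{Bourgeois:Gysin_S1_equivariant_symplectic_homology}.
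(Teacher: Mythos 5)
Your strategy is the same as the paper's: restrict the Morse--Bott spectral sequence for $SH^{S^1,+}_*(W,d\lambda)$ to the contractible class (using P5 to transfer contractibility between $P$ and $W$), split the $E^1$-page into the exceptional and principal towers, use periodicity of the Reeb flow to get repetition with degree shift $\mu_P$, and read off the mean Euler characteristic. But two points do not survive scrutiny. First, the bookkeeping of contractible iterates is reversed: since a simple exceptional orbit generates $\pi_1(P)\cong\Z_k$, its smallest contractible iterate is its $k$-fold cover (not its $(Nk/\ell)$-fold cover), and the correct count is that between two consecutive families of contractible covers of \emph{principal} orbits (periods $m'\tfrac{Nk}{\ell}T_1$) there are exactly $\tfrac{N}{\ell}-1$ families of contractible exceptional covers that are not contained in a principal family. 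Your phrasing attaches the factor $\tfrac{N}{\ell}-1$ to the principal iterates, which would put it on $\chi^{S^1}(N_{T_2})$ rather than on $\chi^{S^1}(N_{T_1})$, contradicting the formula \eqref{eq:definition_mean_euler_contact} you are aiming for.

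The genuine gap is the sign analysis, which is exactly where ``no bad orbits'' in P4 does its work. That hypothesis is not about $H^{S^1}_*(N_T)$ being concentrated in one parity; it is used to prove two parity statements about the grading shifts $\mu(N_T)-\tfrac12\dim(N_T/S^1)$: periodicity gives $\mu(N_{\frac{mk}{\ell}T_2})=m\,\mu_P$, and goodness then forces $\mu_P$ to be even, while the Maslov indices $\mu_E,\mu_{2E},\dots$ of the purely exceptional contractible covers must be odd (otherwise the $N$-fold cover, which is principal, would be bad). Combined with the different half-dimensions of the orbit spaces, $\tfrac12\dim(N_{T_1}/S^1)=n-2$ versus $\tfrac12\dim(N_{T_2}/S^1)=n-1$, this is precisely what makes both towers contribute to the alternating sum with the \emph{same} sign $(-1)^{n-1}$, producing the plus sign between $(\tfrac{N}{\ell}-1)\chi^{S^1}(N_{T_1})$ and $\chi^{S^1}(N_{T_2})$. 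Deriving the overall sign from a single shift by $n-1$ and asserting ``no cancellation'' does not establish this: without the evenness of $\mu_P$ and the oddness of $\mu_E$, the exceptional term could enter with the opposite sign and the claimed identity $\chi_m(SH^{S^1,+}_*(W,d\lambda))=\chi_m(P,\alpha)$ would fail. You need to supply this parity argument (and only after that does the index-definiteness plus uniform boundedness argument let you convert the $E^1$-count into the mean Euler characteristic of the limit).
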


\begin{remark}
This proposition is a generalization of \cite[Example
8.2]{Espina:mean_euler_characteristic}, and Espina's methods could
also be used to show the above.
\end{remark}

\begin{proof}
The Reeb flow on $P$ is periodic, so we can use Morse--Bott methods to construct a spectral sequence converging to $SH^{S^1,+}_*(W,d\lambda)$, see \cite[Section 7.2.2]{FOOO}.
Its $E^1$-page is given by
$$
E^1_{pq}=\bigoplus_{\substack{N_T \text{ consists of contractible orbits} \\ \mu(N_T)-\frac{1}{2}\dim (N_T/S^1)=p}} H^{S^1}_{q}(N_T;\Q).
$$
See also Seidel, \cite{Seidel:SH} formula 3.2, for a similar spectral sequence for symplectic cohomology with different conventions.
Note that the sum is over all orbit spaces of contractible orbits including multiple covers.
Since we have two orbit types, namely corresponding to $N_{T_1}$ and $N_{T_2}$, we can split the direct sums as
$$
E^1_{*q}=
\bigoplus_{m>0 \text{ such that }km\notin N \Z}H^{S^1}_{q}(N_{kmT_1};\Q) \oplus  \bigoplus_{m'>0}H^{S^1}_{q}(N_{\frac{m'Nk}{\ell}T_1};\Q).
$$
Indeed, if $km\in N \Z$ for the first term, then the orbits are part of the larger orbit space $N_{\frac{m'Nk}{\ell}T_1}=N_{\frac{m'k}{\ell}T_2}$ with $m'\frac{N}{\ell}=m$, which we count in the second term. The second term consists of contractible covers of principal orbits.
We have indicated what happens pictorially in Figure~\ref{fig:MB_spectral_sequence}.
\begin{figure}[htp]
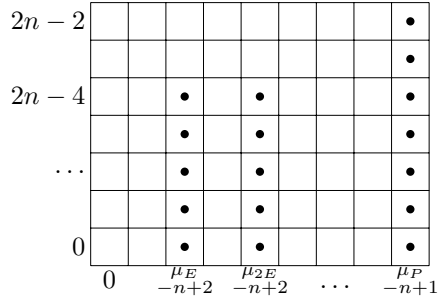

\begin{center}
\begin{sseq}[entrysize=5mm,xlabelstep=2,ylabelstep=2,
xlabels={0;?;\substack{\mu_E\\-n+2};\ldots;\substack{\mu_{2E}\\-n+2};\ldots;\ldots;\substack{\mu_P\\-n+1}},ylabels={0;2;\ldots;?;2n-4;?;2n-2}]
{0...8}{0...6}
\ssmoveto 2 0
\ssdropbull
\ssmove 0 1
\ssdropbull
\ssmove 0 1
\ssdropbull
\ssmove 0 1
\ssdropbull
\ssmove 0 1
\ssdropbull

\ssmoveto 4 0
\ssdropbull
\ssmove 0 1
\ssdropbull
\ssmove 0 1
\ssdropbull
\ssmove 0 1
\ssdropbull
\ssmove 0 1
\ssdropbull

\ssmoveto 8 0
\ssdropbull
\ssmove 0 1
\ssdropbull
\ssmove 0 1
\ssdropbull
\ssmove 0 1
\ssdropbull
\ssmove 0 1
\ssdropbull
\ssmove 0 1
\ssdropbull
\ssmove 0 1
\ssdropbull

\end{sseq}

\end{center}
\caption{A single period in the $E^1$-page of Morse-Bott spectral sequence for $SH^{S^1,+}_*(W,d\lambda)$}
\label{fig:MB_spectral_sequence}
\end{figure}
Since the flow is periodic, the spectral sequence repeats itself after reaching the block consisting of contractible covers of principal orbits.
Hence we count the contribution of each block, which either corresponds to $H^{S^1}_{q}(N_{kmT_1};\Q)$ or to $H^{S^1}_{q}(N_{\frac{mk}{\ell}T_2};\Q)$, to the Euler characteristic.
We see that $\frac{N}{\ell}-1$ copies of $H^{S^1}_{q}(N_{kmT_1};\Q)$ occur before a block of principal orbits appears.
The blocks repeat with degree shift of $\mu_P$, since the flow is periodic.

To determine the signs of each contribution, we observe that $\mu_P$ is even.
To see this, note that $\mu_P=\mu(N_{\frac{k}{\ell}T_2})$.
Since the flow is periodic for the principal orbits, we have $\mu(N_{\frac{mk}{\ell}T_2})=m \mu(N_{\frac{k}{\ell}T_2})$.
Since all orbits are assumed to be good, this can only hold if $\mu(N_{\frac{k}{\ell}T_2})=\mu_P$ is even.
The contribution of the principal orbits to the mean Euler characteristic is hence $(-1)^{\mu_P-\dim \left( N_{T_2}/S^1 \right)}\chi^{S^1}(N_{T_2})=(-1)^{n-1}\chi^{S^1}(N_{T_2})$.

Finally, we claim that the Maslov indices of the contractible covers of the exceptional orbits that are not contained in a space of principal orbits, are odd. We write $\mu_E,\mu_{2E},\ldots$ for these Maslov indices. 
Indeed, suppose that $\mu_E$ is even.
Note that an $N$-fold cover of an exceptional orbit is principal, so by Definition~\ref{def:bad_orbit} we have that $\mu_{NE}-n+1-\mu_E+n-2$ is odd. This contradicts the non-existence of bad orbits.
We note that the exceptional orbits contribute $(\frac{N}{\ell}-1)(-1)^{\mu_E-\dim \left( N_{T_1}/S^1 \right)}\chi^{S^1}(N_{T_1})=(-1)^{n-1}(\frac{N}{\ell}-1)\chi^{S^1}(N_{T_1})$.
\end{proof}

\subsection{Maslov index for simple Boothby-Wang bundles}
In this section we consider Boothby-Wang bundles for which multiple covers of the $S^1$ fibers are contractible.

Let $(M,\omega)$ be a compact simply connected symplectic manifold
such that $[\omega]\in H^2(M;\Z)$ is primitive. Consider a
Boothby--Wang bundle $P_M$ associated with the symplectic manifold
$(M, k\omega)$, and denote the projection $P_M\to M$ by $\Pi$.

Suppose that $c_1(M)=c [\omega]$ for some $c\in\Z$. This implies
that $c_1(\xi)=-\Pi^* (c_1(M))$ is a torsion cohomology class, so
we can use $\Q$ as a coefficient ring of symplectic homology. See
Chapter~9 from \cite{Bourgeois:thesis} for the following lemma.
\begin{lemma}
\label{lemma:degree_shift_BW}
The Maslov index of a $k$-fold cover of a principal orbit $S$ is given by $\mu(S)=2c$.
\end{lemma}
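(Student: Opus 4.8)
The plan is to compute the Maslov (Robbin--Salamon) index of the closed Reeb orbit $\gamma$ given by a $k$-fold cover of a principal $S^1$-fiber of $P_M\to M$, where $P_M$ is the Boothby--Wang bundle of $(M,k\omega)$. Since $[k\omega]$ is the $k$-fold multiple of the primitive class $[\omega]$, each fiber meets the binding-type sphere (a holomorphic sphere representing the generator of $H_2(M;\Z)$) appropriately, and the $k$-fold cover of a fiber is the smallest contractible multiple; one fills it with a spanning disk obtained from such a holomorphic sphere, say $D_\gamma$, whose boundary traverses the fiber $k$ times. First I would set up the linearized Reeb flow along $\gamma$ with respect to a trivialization of $\xi=\ker\theta$ over $D_\gamma$. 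The key structural fact is that the Reeb flow of the Boothby--Wang form is exactly the principal $S^1$-action, so $TFl^{R_\theta}_t$ restricted to $\xi$ is, in a suitable unitary trivialization of the pulled-back bundle, rotation in the ``angular'' direction combined with the holonomy of the connection $\theta$ around the disk. Concretely, the contact distribution $\xi$ along $\gamma$ is identified with $T_xM$, and parallel transport of the connection around the boundary circle (covered $k$ times) is governed by the curvature $d\theta=-2\pi\Pi^*\omega$, so the ``twisting'' of $\xi$ along $\gamma$ relative to the trivialization coming from $D_\gamma$ is measured by $\langle c_1(TM), [\text{sphere}]\rangle$-type data.

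The key steps, in order: (1) Identify $\gamma$ with the boundary of a spanning disk $D_\gamma$ coming from a holomorphic sphere $A$ in class $[A]\in H_2(M;\Z)$ with $\langle[\omega],[A]\rangle=1$ (so that $\partial D_\gamma$ is the $k$-fold fiber), using simple connectivity of $M$ to guarantee $\pi_2$-surjectivity onto such a class. (2) Trivialize $\xi|_{D_\gamma}\cong D_\gamma\times \C^{n-1}$ and write the linearized Reeb return map as a loop of symplectic matrices; because the Reeb flow is the circle action, this loop is, up to homotopy, a loop of \emph{unitary} matrices whose total rotation number in $U(n-1)$ equals $\langle c_1(\xi), [\overline{D_\gamma}\cup D_\gamma]\rangle$-type quantity — i.e.\ determined by evaluating $c_1(TM)$ (equivalently $c\,[\omega]$) on the capped sphere. (3) Apply the standard computation of the Robbin--Salamon index for such ``rotation'' paths: a full rotation by $2\pi$ in one $\C$-summand of the trivialized contact bundle contributes $2$ to the Maslov index (this is the elementary normalization in \cite{Robbin:Maslovindex}), so the total index is $2\langle c_1(M),[A]\rangle = 2c\langle[\omega],[A]\rangle = 2c$. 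Here the catenation property of Remark~\ref{rem:CZ-index}(5) is what lets us decompose the path summand-by-summand.

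The main obstacle is getting the bookkeeping of the spanning disk and the trivialization exactly right: one must check that the cap $D_\gamma$ used to define the Conley--Zehnder/Maslov index is consistent with the choice that makes $c_1(\xi)$ evaluate correctly, and that the $k$-fold covering of the fiber is compensated precisely by $[k\omega]$ being used as the curvature (so that the ``number of $2\pi$ rotations'' really is $c$ and not $ck$ or $c/k$). I expect the cleanest route is to push the whole computation onto $M$: the disk $D_\gamma$ projects to the sphere $A\subset M$, the contact distribution along $\gamma$ pulls back from $TM|_A$, and the linearized Reeb flow's total phase is the parallel transport of the connection on $L^{\otimes k}=\det$-type line bundle versus the splitting of $TM$ — but one then only needs $c_1(TM)=c[\omega]$ and $\langle[\omega],[A]\rangle=1$, independently of $k$. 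Then I would invoke Chapter~9 of \cite{Bourgeois:thesis} for the detailed local model if a fully self-contained argument is not desired, and conclude $\mu(S)=2c$.
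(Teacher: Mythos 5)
Your outline is correct, and it is essentially the standard Morse--Bott computation for prequantization bundles; note that the paper itself offers no proof of Lemma~\ref{lemma:degree_shift_BW} but simply cites Chapter~9 of \cite{Bourgeois:thesis}, which carries out exactly this kind of argument. Two points in your sketch deserve to be made sharper, since they are precisely where your ``bookkeeping'' worry lives. First, in the identification $\xi\cong\Pi^*TM$ the linearized Reeb flow along a fiber is literally the identity (because $\Pi\circ Fl^{R_\theta}_t=\Pi$), not a rotation combined with holonomy; hence the entire Robbin--Salamon index comes from the comparison loop $C(t)=\Phi^{D}_t\circ(\Phi^{\Pi}_t)^{-1}$ between the capping-disk trivialization and the $d\Pi$-trivialization, and for a loop based at the identity the index is twice its winding number, consistent with your normalization from \cite{Robbin:Maslovindex}. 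Second, the absence of a factor of $k$ is explained by the degree of the projected disk: the restriction $P_M|_A\to A$ is a circle bundle of Euler number $k\langle[\omega],[A]\rangle=k$, so the disk $D_\gamma$ capping the $k$-fold fiber projects to $A$ with degree one (the lens-space model $S^3\to L(k,1)\to S^2$ makes this explicit), and therefore the winding of $C$ equals $\langle c_1(TM),[A]\rangle=c$, giving $\mu(S)=2c$ independently of $k$. Finally, the existence of a \emph{spherical} class $A$ with $\langle[\omega],[A]\rangle=1$ follows from primitivity of $[\omega]$ together with simple connectivity of $M$ (Hurewicz and the universal coefficient theorem); holomorphicity of the sphere and ``the generator of $H_2$'' play no role. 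It is also worth recording that the answer does not depend on the choice of capping disk, since any sphere in $P_M$ projects to a sphere $S'$ in $M$ with $\langle[\omega],S'\rangle=0$, so $c_1(\xi)=-\Pi^*(c[\omega])$ evaluates trivially on it; this is implicit in the paper's standing assumption that $c_1(\xi)$ is torsion. With these clarifications your argument is complete and agrees with the cited computation.
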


\begin{remark}
\label{remark:multiple_cover_contractible} In this setup,
$\pi_1(P_M)\cong\Z_k$, so a $k$-fold cover of a principal orbit is
contractible. Furthermore, if $\gamma$ is a principal orbit, then
$[\gamma]\in \pi_1(P_M)$ represents $1\in \Z_k$.
\end{remark}

% =====================================================
\section{Open books for Boothby--Wang bundles}
% =====================================================
\label{sec:bw}

In this section, we discuss the topological conditions for a
Boothby--Wang bundle to possess a specific supporting open book.
We look for a codimension two submanifold with trivial normal
bundle such that its complement is a fiber bundle over $S^1$.

\subsection{Setup}
Let $(M,\omega)$ be a compact symplectic manifold with integral
symplectic form. Fix $\ell\in \Z_{>0}$ and consider a
Boothby--Wang bundle $P_{M,\ell}$ associated with
$(M,\ell\omega)$.

Let $H\subset M$ be a Donaldson hypersurface Poincar\'e dual to $k [\omega]$ for some positive integer
$k\in \Z_{>0}$ \cite{Donaldson:symplectic_hypersurfaces}. The restriction of $P_{M,\ell}$ to the symplectic submanifold $H$, which we denote by $P_{H, \ell}$, is a codimension two contact submanifold in $P_{M,\ell}$.

\subsection{Neighborhood of a Donaldson hypersurface}
\label{sec:nu}

By the Weinstein symplectic neighborhood theorem, a neighborhood
of $H$ in $M$ can be identified with the normal bundle to $H$. By
Corollary~11.2 of \cite{Milnor:characteristic_classes}, the
fundamental cohomology class for the normal bundle of $H$ in $M$
corresponds to a canonical cohomology class $u'\in H^2(M,M-H;\Z)$.
Write the inclusion $(M,\emptyset)\hookrightarrow (M,M-H)$ by
$j_M$. Since the homology class $[H]$ is Poincar\'e dual to
$k[\omega]$, it follows that $u'|_M:=j_M^*u'$ is equal to
$k[\omega]$ by Problem 11-C of
\cite{Milnor:characteristic_classes}. Theorem~11.3 of
\cite{Milnor:characteristic_classes} then tells us that the first
Chern class of the normal bundle $\nu_M(H)$ of $H$ in $M$ is given
by
$$
c_1(\nu_M(H))=i^*(u'|_M)=i^*(k[\omega]),
$$
where $i:H\to M$ is the inclusion. Thus the normal bundle
$\nu_M(H)$ can be identified with the associated line bundle
$$
\nu_M(H)\cong P_{H,k}\times_{S^1} \C,
$$
where $S^1$ acts diagonally on $P_{H,k} \times \C$ by
\[
(x, v) \cdot a=(x \cdot a, va)
\] for $a \in S^1$, $x\in P_{H, k}$, and $v \in \C$. The symplectic form on
$\nu_M(H)$ can then be expressed as
\[
\Pi_k^*\,i^*(k\ow) + \frac{1}{2\pi} d(r^2\theta) - \frac{1}{2\pi}
d(r^2 d\phi),
\] where $(r, \phi)$ are the polar coordinates on $\C$, $\Pi_k: P_{H,k} \to H$ is the projection,
and $\theta$ is the connection $1$-form on $P_{H,k}$ with $d\theta = -2\pi \Pi^*_k\,i^* (k\ow)$; cf. Biran~\cite{Biran:Lagrangian_barrier}.

\begin{remark}
\label{remark:convex_end_M-H}
The hypersurface $H$ can be seen as the convex end of $M-H$.
More precisely, there is a neighborhood $\nu_M(H)$ such that $M-\nu_M(H)$ carries a compact Weinstein structure: see \cite{Giroux:ICM2002}, Proposition 11.
\end{remark}

\subsection{Choice of symplectic form}
\label{sec:k=l}
We shall now argue that we can only expect the Boothby--Wang bundle $P_{H,\ell}$ over the Donaldson hypersurface $H$ to serve as the binding for an open book on $P_{M,\ell}$ if we choose $k=\ell$.

The choice of $\ell$ dividing $k$ is motivated by the following proposition.
\begin{proposition}\label{prop:l divides k}
Suppose $\ell$ divides $k$. Then the normal bundle
$\nu_{P_{M,\ell}}( P_{H,\ell})$ of $P_{H,\ell}$ in $P_{M,\ell}$ is
trivial.
\end{proposition}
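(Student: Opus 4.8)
The plan is to reduce the statement to a first Chern class computation: I would show that $c_1\bigl(\nu_{P_{M,\ell}}(P_{H,\ell})\bigr)$ vanishes in $H^2(P_{H,\ell};\Z)$, and then invoke the fact that a complex line bundle over a paracompact space (here the closed manifold $P_{H,\ell}$) is classified up to isomorphism by its first Chern class, so that vanishing $c_1$ forces triviality. Note that the normal bundle is naturally a complex line bundle, via the identification $\nu_M(H)\cong P_{H,k}\times_{S^1}\C$ recalled in Section~\ref{sec:nu}.

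The computation would proceed in three steps. First, since the Boothby--Wang projection $\Pi\colon P_{M,\ell}\to M$ is a submersion with $P_{H,\ell}=\Pi^{-1}(H)$, and the fibre (vertical) directions of $\Pi$ are tangent to $P_{H,\ell}$, the differential of $\Pi$ induces an isomorphism of complex line bundles $\nu_{P_{M,\ell}}(P_{H,\ell})\cong \Pi^*\nu_M(H)$, writing $\Pi$ also for the restricted projection $P_{H,\ell}\to H$. Second, by Section~\ref{sec:nu} one has $c_1(\nu_M(H))=i^*(k[\omega])=k\,i^*[\omega]$, where $i\colon H\hookrightarrow M$; hence by naturality of Chern classes $c_1\bigl(\nu_{P_{M,\ell}}(P_{H,\ell})\bigr)=k\,\Pi^*i^*[\omega]$. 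Third, the circle bundle $\Pi\colon P_{H,\ell}\to H$ is the restriction to $H$ of the Boothby--Wang bundle of $(M,\ell\omega)$, so its Euler class is $\ell\,i^*[\omega]$ (up to sign); by the Gysin sequence of an $S^1$-bundle the pullback map $\Pi^*$ annihilates the Euler class, so $\ell\,\Pi^*i^*[\omega]=0$. Using the hypothesis $\ell\mid k$, write $k=\ell m$; then $c_1\bigl(\nu_{P_{M,\ell}}(P_{H,\ell})\bigr)=m\cdot\bigl(\ell\,\Pi^*i^*[\omega]\bigr)=0$, and the proposition follows.

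I do not expect a genuine obstacle in this argument; it is essentially bookkeeping. The two points that deserve a line of care are the matching of sign and orientation conventions for the Euler class in the third step (irrelevant for triviality but worth stating explicitly), and the observation that the identity $\ell\,\Pi^*i^*i^*[\omega]=0$ — more precisely $\ell\,\Pi^*i^*[\omega]=0$ — holds exactly in integral cohomology, so that no torsion in $H^2(P_{H,\ell};\Z)$ can interfere with the divisibility step $k=\ell m$.
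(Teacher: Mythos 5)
Your proposal is correct and follows essentially the same route as the paper: identify $\nu_{P_{M,\ell}}(P_{H,\ell})$ with the pullback $\Pi^*\nu_M(H)$, use $c_1(\nu_M(H))=i^*(k[\omega])$, and kill $k\,\Pi^*i^*[\omega]$ via exactness of the Gysin sequence for $P_{H,\ell}\to H$ (whose Euler class is $\ell\,i^*[\omega]$), using $\ell\mid k$. The paper phrases this as a commuting diagram of Gysin sequences, but the content is identical, including the implicit appeal to classification of line bundles by $c_1$.
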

\begin{proof}
We consider the following diagram of bundles
$$
\entrymodifiers={+!!<0pt,\fontdimen22\textfont2>}
\xymatrix{%@R=20pt@C=20pt{
~ & S^1 \ar[d] & S^1 \ar[d] \\
\C \ar[r] & \nu_M(H)\tilde \times S^1 \ar[d] \ar[r] & P_{H,\ell} \subset P_{M,\ell} \ar[d]^\Pi \\
\C \ar[r] & \nu_M(H) \ar[r]& H\subset M.
 }
$$
The columns in this diagram represent Boothby--Wang bundles and
the rows indicate normal bundles. We use the same notation for the
projection $\Pi: P_{M,\ell} \to M$ and its restrictions.
$\nu_M(H)$ is identified with a tubular neighborhood of $H$ in
$M$.

According to the diagram, the Boothby--Wang bundle $\nu_M(H)\tilde \times S^1$ can be considered as the normal bundle of $P_{H,\ell}$ in $P_{M,\ell}$,
$$
\Pi^{-1}(\nu_M(H))=\nu_{P_{M,\ell}}(P_{H,\ell}).
$$
In order to regard $P_{H,\ell}$ as a binding of an open book for $P_{M,\ell}$, its normal bundle has to be trivial.
Let $i: H \to M$ denote the inclusion.
The diagram of Gysin sequences
\[
\entrymodifiers={+!!<0pt,\fontdimen22\textfont2>}
\xymatrix{%@R=20pt@C=20pt{
H^0(M;\Z) \ar[r]^{\cup \ell[\omega]} & H^2(M;\Z)  \ar[r]^{\Pi^*} \ar[d]^{i^*}
&H^2(P_{M,\ell};\Z) \ar[d]\\
H^0(H;\Z) \ar[r]^{\cup \ell i^*[\omega]} & H^2(H;\Z) \ar[r]^{\Pi^*}
&H^2(P_{H,\ell};\Z)
}
\]
shows that the first Chern class
\[
c_1( \nu_{P_{M,\ell}}(P_{H,\ell}) )=\Pi^*c_1( \nu_M(H)
)=\Pi^*i^*(k[\omega])
\]
is zero by exactness,  if $\ell$ divides $k$.
\end{proof}

\begin{remark}
The condition of Proposition~\ref{prop:l divides k} is not always necessary for the normal bundle
$\nu_{P_{M,\ell}}( P_{H,\ell})$ to be trivial.
We can take for example the case where $H$ consists of points in a surface $(M,\omega)$.

However, to obtain a proper open book, the condition $\ell$
divides $k$ is still necessary as the following example shows.
Consider $\R P^3$ as a Boothby--Wang bundle over $(S^2,\omega)$.
If $\omega$ represents a primitive cohomology class, then we have
$\ell= 2$.

If we choose $[H]$ to be Poincar\'e dual to $[\omega]$, then $[H]$
is represented by a single point. This results in a decomposition
of $\R P^3$ into two solid tori, one for a neighborhood of the
fiber over $H$, and one for the complement. The gluing map for
this pair of solid tori does not correspond to an open book,
because the projection to $S^1$ in a neighborhood of the fiber
over $H$ has the form
\begin{align*}
P_{H,\ell}\times (D^2-\{0\}) & \longrightarrow S^1 \\
(e^{i\psi},re^{i\phi}) & \longmapsto  e^{i(2\phi-\psi)},
\end{align*}
where we have identified $P_{H,\ell}$ with $S^1$.
Baker, Etnyre and Van Horn-Morris \cite{Baker:rational_open_book}
refer to such structures as rational open books.
\end{remark}

Now consider the case that $k$ divides $\ell$.

\begin{proposition}
Suppose $k$ divides $\ell$.
Then the restriction of the Boothby--Wang bundle $P_{M,\ell}$ to $M-H$ is trivial.
\end{proposition}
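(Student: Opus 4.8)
The plan is to reduce everything to the Euler class of the restricted circle bundle. By construction $P_{M,\ell}$ is the principal $S^1$-bundle over $M$ with $c_1(P_{M,\ell})=\ell[\omega]$, so $P_{M,\ell}|_{M-H}$ is the principal $S^1$-bundle over $M-H$ with Euler class $\ell[\omega]|_{M-H}\in H^2(M-H;\Z)$. Since $BS^1=\C P^\infty=K(\Z,2)$, isomorphism classes of principal $S^1$-bundles over $M-H$ are classified by their Euler class in $H^2(M-H;\Z)$, with the trivial bundle corresponding to $0$. Hence it suffices to prove $\ell[\omega]|_{M-H}=0$.

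The key step is to observe that $k[\omega]$ already restricts to zero on $M-H$, and this is essentially contained in Section~\ref{sec:nu}. There the Poincar\'e dual $k[\omega]$ of $H$ is identified with $j_M^*u'$, where $u'\in H^2(M,M-H;\Z)$ is the fundamental cohomology class of the normal bundle of $H$ (supported in a tubular neighborhood) and $j_M\colon(M,\emptyset)\hookrightarrow(M,M-H)$ is the inclusion of pairs. Writing $r\colon H^2(M;\Z)\to H^2(M-H;\Z)$ for the restriction homomorphism, exactness of the long exact sequence of the pair $(M,M-H)$ in the segment
$$H^2(M,M-H;\Z)\xrightarrow{j_M^*} H^2(M;\Z)\xrightarrow{r} H^2(M-H;\Z)$$
gives $r(k[\omega])=r(j_M^*u')=0$. (Concretely: $k[\omega]$ is represented by a closed $2$-form supported near $H$, so it vanishes in cohomology over the complement.)

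It remains to bring in the hypothesis. Since $k$ divides $\ell$, write $\ell=km$. As $r$ is a group homomorphism, $\ell[\omega]|_{M-H}=r(\ell[\omega])=m\,r(k[\omega])=0$. Therefore $P_{M,\ell}|_{M-H}$ has vanishing Euler class and is a trivial principal $S^1$-bundle, which is the claim; in particular it admits a section, namely the one coming from the triviality of the line bundle $L^{\otimes \ell}|_{M-H}$.

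I do not expect a genuine obstacle here: once the normal-bundle analysis of Section~\ref{sec:nu} is in place, the argument is a one-line consequence. The only points deserving care are quoting the identification $k[\omega]=j_M^*u'$ with the correct conventions, and recalling that over a space of the homotopy type of a CW complex (which the open manifold $M-H$ is) the vanishing of the Euler class is not merely necessary but also sufficient for triviality of a circle bundle. One could equally well phrase the vanishing via the Gysin sequence of $P_{M,\ell}\to M$ restricted over $M-H$, but the exact sequence of the pair is the most direct route.
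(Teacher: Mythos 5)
Your argument is essentially the same as the paper's: both use the long exact sequence of the pair $(M,M-H)$ together with the identification $k[\omega]=j_M^*u'$ from Section~\ref{sec:nu} to conclude that $k[\omega]$ (hence $\ell[\omega]$, since $k$ divides $\ell$) restricts to zero on $M-H$, and then invoke the classification of circle bundles by their Euler class. The proposal is correct and adds only the explicit remark that vanishing of the Euler class suffices for triviality, which the paper leaves implicit.
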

Note that we can think of a trivial $S^1$-bundle over $M-H$ as an $(M-H)$-bundle over $S^1$, which is necessary for an open book.
\begin{proof}
Consider the long exact sequence of the pair $(M,M-H)$ in cohomology,
\begin{equation*}
\entrymodifiers={+!!<0pt,\fontdimen22\textfont2>} \xymatrix{
H^2(M,M-H;\Z) \ar[r]^-{j_M^*} & H^2(M;\Z) \ar[r]^-{j^*} &
H^2(M-H;\Z). }
\end{equation*}
All maps are pullbacks under inclusion. As we have seen in the
beginning of Section~\ref{sec:nu} $u'|_M=k[\omega]$. Furthermore,
$u'|_M=j_M^* u'$, so $j^*(u'|_M)=0$ by exactness. Hence $P_{M,k}$
is trivial when restricted to $M-H$. As $k$ divides $\ell$, it
follows that $P_{M,\ell}$ is trivial when restricted to $M-H$.
\end{proof}

% =====================================================
\section{Abstract open book}
% =====================================================
\label{sec:abs open book}

Motivated by Section~\ref{sec:k=l}, we choose $k=\ell$ in our
search of an open book for a Boothby--Wang bundle associated with
$(M, \ell \ow)$ accompanied by a Donaldson hypersurface $H\subset
M$ Poincar\'e dual to $k[\ow]$. In principle, we can then try to
show directly that we obtain a contact open book in terms of the
$S^1$ bundle away from the binding. However, it is more convenient
to approach the problem by constructing an open book with fibered
Dehn twist as monodromy. We then show that the resulting contact
manifold is contactomorphic to a Boothby--Wang bundle we were
considering. For simplicity, we rescale the symplectic form and
set $k=\ell =1$.

Let $(W,-d\lambda/ 2\pi )$ be a compact Weinstein manifold such that the
boundary $(P=\partial W,\theta=\lambda|_{P})$ is a Boothby--Wang
bundle over some symplectic manifold $(H,\omega_H)$ with
projection map $\Pi_H: P \to H$. By the Boothby--Wang condition,
all Reeb orbits of $\theta$ are periodic. We denote the Reeb
vector field by $R_\theta$.

We can construct two contact manifolds out of the data given above. First of all, we can
define a symplectic manifold $M$ and a Boothby--Wang bundle over $M$. Secondly,
as discussed in Section~\ref{sec:symplectomorphisms_monodromy}, we can define a fibered Dehn twist $\tau$ for $W$ along its boundary, and then define a contact open book with page $W$ and monodromy $\tau$.

The following diagram illustrates the constructions we shall perform.
The maps will be defined subsequently. Note that the horizontal maps are only defined on
subsets of the spaces in the diagram, since they serve as gluing maps.

\begin{equation}
\label{eq:diagram_Boothby-Wang}
\entrymodifiers={+!!<0pt,\fontdimen22\textfont2>}
\xymatrix{%@R=30pt@C=30pt{
& \nu/ \text{Binding piece} & \text{Middle piece} & W \text{ piece}\\
\text{Open book} & P\times {\mathring D^2} \ar[r]^-{\psi_{OB}} & P\times I \times \R \,/{\sim} \ar[r]^-{\id} & W\times S^1 \\
\text{Boothby--Wang} & \txt{$(P\times_{S^1}{\mathring D^2})  \tilde \times S^1$\\ $\cong P\times {\mathring D^2}$} \ar[r]^-{\psi_{BW}} \ar[d]_{\Pi_\nu} \ar[u]^{\id} &  P\times I \times S^1 \ar[r]^-{\id} \ar[d]_{\Pi_{mid}} \ar[u]^{\psi_{mid}} & W\times S^1 \ar[d]^{\Pi_W} \ar[u]_{\id} \\
\text{Symplectic} & P\times_{S^1}{\mathring D^2} \ar[r]^-{\psi_S} & P\times I \ar[r]^-{\id }& W \\
}
\end{equation}

\subsection{Symplectic manifold}\label{sec:symplectic construct}
Let us now define the three symplectic pieces we shall patch together to form our symplectic manifold $M$. Note that the sizes we choose for the construction are artificial.

\begin{itemize}
\item The $W$ piece is the given Weinstein manifold equipped with the exact symplectic form $-\frac{1}{2\pi}d\lambda$. In a collar neighborhood of the boundary, the symplectic form looks
like a symplectization form.
For later computations it is convenient to rescale this form though.
In other words, we take
$$
(P\times I^{-},-\frac{1}{2\pi} d (e^{t-C}\theta) \,)
$$
as a collar neighborhood of the boundary of $W$ for a fixed positive constant $C$.
Here $I^{-}$ stands for the interval $] {-1}, 0]$.

\item The middle piece $P\times I$ serves as an auxiliary piece and we furnish it with the exact symplectic form
$$
-\frac{1}{2\pi}d\left( \rho(t) \theta \right),
$$
where $\rho$ is a function defined on $I$ that we shall specify later. Here $I$ stands for the interval
$] {-1}, 1 [$.
\item The last piece is the associated disk bundle $\nu:=P\times_{S^1}{\mathring D^2}$,
regarded as the
orbit space of $P\times {\mathring D^2}$ under the $S^1$ action
\[
(x,re^{i\phi})\cdot a= (x\cdot a,re^{i(\phi+a)}).
\] Here ${\mathring D^2} \subset \C$ is the open disk at $0$ of radius $1$
with polar coordinates $(r, \phi)$, and $a \in S^1 \cong \R/2\pi\Z$ is identified with $e^{ia}\in \C$.
We take the symplectic form
\[
\ow_\nu=\Pi_H^* \omega_H+\frac{1}{2\pi}d\left( r^2 \theta \right) -\frac{1}{2\pi} d\left( r^2 d\phi \right)
=
-\frac{1}{2\pi} d \left( (1-r^2)\theta -(1-r^2)d\phi \right).
\]
Note that this symplectic form is \emph{not} exact but it is an integral symplectic form on $\nu$ with the cohomology class  $\Pi_H^*[\omega_H]$.
In our conventions, the connection $1$-form $\theta$ of the Boothby--Wang bundle $P$ satisfies $d\theta=-2\pi \Pi_H^*\omega_H$.
\end{itemize}

Next we define the two gluing maps between the pieces. They ought to be
symplectomorphisms so that we obtain a closed symplectic manifold $(M, \ow)$. This imposes necessary behaviors on our function $\rho$.

We begin with gluing the middle piece $P\times I$ to $W$ using the identity:
\[
P \times I \supset P\times I^{-} \overset{\id}{\longrightarrow} P \times I^- \subset W.
\] This implies that, for $t\in I^{-}$ and small positive
values of $t$, we must have
$$
\rho(t)=e^{t-C}.
$$

On the other hand, we can glue $P\times_{S^1}{\mathring D^2}$ to $P\times I$ using the  diffeomorphism
\begin{align*}
\psi_{S}: P \times_{S^1} {\mathring D^2} \supset P\times_{S^1} \left( {\mathring D^2}-\{ 0 \} \right) & \longrightarrow P\times I \\
\left[ x,re^{i\phi} \right] & \longmapsto  \bigl(x\cdot (-\phi), 1-r\bigr).
\end{align*}
%This map is well-defined with respect to $(x, re^{i\phi}) \sim (x \cdot a, re^{i(\phi+a)})$ where we identify $a\in S^1 \cong \R/2\pi\Z$ with $e^{ia} \in \C$.
If we pull back the symplectic form $-\frac{1}{2\pi} d( \rho(t) \theta )$ under this diffeomorphism, we find
$$
\psi_{S}^*\left(  -\frac{1}{2\pi} d( \rho(t) \theta ) \right)=-\frac{1}{2\pi} d\bigl( \rho(1-r)(\theta-d\phi) \bigr),
$$
because $\mathcal L_{-\phi R_\theta} \theta=-d\phi$.
For this symplectic form to coincide with the symplectic form on $P\times_{S^1} \left( {\mathring D^2}-\{ 0 \} \right)$ near $r=0$,
we require that 
$$
\rho(1-r)=1-r^2=(1-r)\left( 2-(1-r) \right),
$$
near $r=0$. For $t$ near $1$, we set $\rho(t)=t(2-t)$.
By gluing the three pieces together, one obtains a symplectic manifold.

On the other hand, we can go back to the discussion from Section~\ref{sec:bw}.
Given an integral symplectic manifold $(M,\omega)$, and hypersurface $H$ that is Poincar\'e dual to $k[\omega]$, one can define $W:=M-\nu_M(H)$.
It is not clear that we can then apply the above construction.
We need $W$ to be Weinstein, and such that $P=\partial W$ has a Boothby-Wang type contact form.

However, for a smoothly polarized K\"{a}hler manifold $\mathcal{P}=(M^{2n},\omega,J;H)$, i.e. $(M,\omega,J)$ is a
K\"{a}hler manifold, and $H\subset M$ is a smooth and reduced complex
hypersurface whose homology class $[H]\in H_{2n-2}(M;\Z)$
represents the Poincar\'{e} dual to $k[\omega]\in H^2(M;\Z)$ for
some $k\in\N$, the symplectic manifold $(M,k\omega)$ can be reconstructed by patching the three
symplectic pieces as above (see \cite[proof of Theorem
2.6.A]{Biran:Lagrangian_barrier}).

In order to make a general statement, consider the following.
Let $(M^{2n},\omega)$ be a closed symplectic manifold with
integral symplectic form $[\omega]\in H^2(M;\Z)$, and let $H$ be a closed symplectic hypersurface, i.e.~a codimension two closed symplectic submanifold, whose homology class $[H]\in H_{2n-2}(M;\Z)$ is the Poincar\'{e} dual to $k[\omega]\in H^2(M;\Z)$ for some $k\in\N$.
\begin{definition}
If $(M^{2n},k\omega)$ can be constructed by patching three
symplectic pieces as above, then we say that $H$ is an
\textbf{adapted Donaldson hypersurface}.
\end{definition}

\begin{remark}
For a smoothly polarized K\"{a}hler manifold
$\mathcal{P}=(M^{2n},\omega,J;H)$, the complex hypersurface $H$ is an adapted Donaldson
hypersurface. As Biran points out in \cite{Biran:Lagrangian_barrier}, the symplectic hyperplane section obtained by
Donaldson's theory of symplectic hypersurfaces
\cite{Donaldson:symplectic_hypersurfaces} is probably an adapted Donaldson hypersurface.
\end{remark}

\subsection{Boothby--Wang bundle}
We now construct the Boothby--Wang bundle over the three pieces of $M$.
\begin{itemize}
\item The symplectic form on $W$ is exact, so the associated Boothby--Wang bundle $W\times S^1$ can be endowed with the contact form
$$
d\phi+\lambda.
$$
The bundle projection is the natural one:
\begin{align*}
\Pi_W: W\times S^1 & \longrightarrow  W \\
 (x,\phi) & \longmapsto  x.
\end{align*}
\item Similarly, the Boothby--Wang bundle over the middle piece $P\times I$ looks like
$$
\left( P\times I\times S^1,d\phi+ \rho(t) \theta \right)
$$
with the projection
\begin{align*}
\Pi_{mid}: P\times I \times S^1 &\longrightarrow P \times I\\
(p,t,\phi) &\longmapsto (p,t).
\end{align*}
\item By Proposition~\ref{prop:l divides k} and its proof, we can identify the Boothby--Wang bundle over $\nu$ with the manifold $P\times {\mathring D^2}$.
We furnish it with the contact form
$$
\alpha_\nu=(1-r^2)\theta+r^2d\phi.
$$
The corresponding Reeb field is given by
$$
R_\theta+\partial_\phi
$$ and therefore generates an $S^1$ action on $P\times {\mathring D^2}$. This Reeb action coincides with the $S^1$ action we used to define $\nu$ as an orbit space.
We check that the map
\begin{align*}
\Pi_\nu: P \times {\mathring D^2} & \longrightarrow \nu = P \times_{S^1} {\mathring D^2} \\
(x,v) & \longmapsto [x,v]
\end{align*}
pulls back the symplectic form $-2\pi \ow_\nu$ to $d\alpha_\nu$.
It follows that $\alpha_\nu$ is a connection $1$-form and $\Pi_\nu$ is the projection map
for this $S^1$-bundle.
Note that, as a Boothby--Wang bundle, it is not trivial.
\end{itemize}

The gluing maps are induced from the symplectic gluing maps used for $M$ as follows:
\begin{align}
 P\times I \times S^1 \supset P \times I^{-} \times S^1 &\overset{\id}{\longrightarrow} W\times S^1. \notag \\
\intertext{and}
\psi_{BW}: P\times {\mathring D^2} \supset P\times \left( {\mathring D^2}-\{ 0\} \right)
& \longrightarrow    P\times I \times S^1 \\
(x,re^{i\phi}) & \longmapsto  \bigl(x \cdot(-\phi),1-r,\phi\bigr).\notag
\end{align}\label{psi-bw}

\subsection{Contact open book}
Finally we construct a contact open book out of the three pieces announced in our diagram~\eqref{eq:diagram_Boothby-Wang}.
For the construction, we adopt a method similar to the standard one described in  Section~\ref{sec:contact_open_books}. However, we separate what used to be one page into a piece with trivial monodromy and a piece with a perturbation of a fibered Dehn twist as monodromy.
In fact, our monodromy is not the identity near the boundary, so we need to glue differently.
We shall give a recipe to correct this
in Section~\ref{sec:deforming_contact_form_near_binding}.

First we consider the following pieces:
\begin{itemize}
\item On $W$, we take the identity for the monodromy, so the mapping torus looks like
$W \times S^1$ with contact form $d\phi+\lambda$.
\item The middle piece $P \times I \times \R \,/{\sim}$ carries a nontrivial monodromy given by
\[
(x, t, \phi) \sim \bigl(x\cdot f(t), t, \phi+h(t)\bigr).
\]
By the same token as in Section \ref{sec:symplectomorphisms_monodromy}, we set
\[
h(t)=A - e^{t-C}f(t) + \int^t_0 e^{s-C}f(s)\,ds.
\] The function $f: I \to \R$ shall be specified later. Nevertheless, we
demand $f(t)=0$ for $t \in I^{-}$, and choose $A=2\pi$.
We see that $d\phi + e^{t-C}\theta$ descends to a well-defined contact form here.
\item
The neighborhood of the binding is given by $P \times {\mathring D^2}$ with contact form $h_1(r)\theta + h_2(r)d\phi$.
Since we will glue in a way that differs from the standard method for open books, we choose $h_1(r)=1-r^2$ and $h_2(r) = r^2$.
\end{itemize}

Next let us define the gluing maps. For the trivial monodromy part, we use the identity map:
\[
 P\times I \times \R \,/{\sim} \,\supset\, P\times I^{-} \times \R \,/{\sim}  \overset{\id}{\longrightarrow}  W\times S^1.
\]
To glue in the binding piece, we first define an auxiliary map
\begin{align}
\psi_{mid}: P \times I \times S^1 & \longrightarrow  P \times I \times \R \,/{\sim} \\
    (x, t, \phi) & \longmapsto  \left(x \cdot \frac{f(t)\phi}{2\pi}, t, \frac{h(t)\phi}{2\pi}\right). \notag
\end{align}\label{psi-mid}
Then we define the gluing map as composition of $\psi_{mid}$ and $\psi_{BW}$:
\begin{align}
\psi_{OB}:  P \times {\mathring D^2} \supset P\times ({\mathring D^2}-\{0\}) & \longrightarrow  P \times I \times \R\,/{\sim} \\
    (x, re^{i\phi}) & \longmapsto  \psi_{mid}\circ \psi_{BW}(x,re^{i\phi}). \notag
\end{align}\label{psi-ob}

\subsection{The twisting profile}
\label{sec:computing_profile}
We have defined all maps in the diagram~\eqref{eq:diagram_Boothby-Wang}, but two of the maps still depend on the yet to be defined twisting profile $f$.
Let us now find out what it should be.

We pull back the open book form using the diffeomorphism $\psi_{mid}$. Using a computation similar to Section~\ref{sec:symplectomorphisms_monodromy}, we see that
\[
\psi_{mid}^* (d\phi + e^{t-C}\theta) = \frac{1}{2\pi}\left(2\pi+
\int^{t}_0 e^{s-C}f(s)\,ds \right)d\phi + e^{t-C}\theta.
\]
If we choose the profile $f$ appropriately, this becomes a multiple of the Boothby--Wang form
\[
d\phi + \rho(t)\theta.
\]
In other words, we solve the equation
\[
\rho(t) = 2\pi\frac{e^{t-C}}{2\pi + \int^{t}_0 e^{s-C}f(s)\,ds}
\]
for the profile function $f$, and we obtain
\begin{equation}
\label{eq:twisting_profile}
f(t) = 2\pi\frac{\rho(t)-\rho'(t)}{\rho(t)^2}.
\end{equation}
Since the behavior of the function $\rho(t)$ for $t$ near $0$ or $1$ is determined by our choices of symplectic forms,
we see that the twisting profile $f(t)$ is $0$ for $t$ near $0$ and $f(t)\to 2\pi$ for $t\to 1$.

Hence we get a commutative
diagram~\eqref{eq:diagram_Boothby-Wang}. Furthermore, since $f$
goes from $0$ to $2\pi$, the monodromy is a right-handed fibered
Dehn twist (observe that $f$ is the twisting profile for the
inverse of a right-handed fibered Dehn twist).

\subsection{Deforming the contact form}
\label{sec:deforming_contact_form_near_binding} In this section we
adapt the contact form on the set $P\times {\mathring D^2}$ to obtain a
compatible open book. Let $f$ be a smooth monotone function which
is $0$ near $0$ and $2\pi$ near $\eta$, where $0<\eta<\min \{
C,1\}$. We now take this $f$ as twisting profile. Note that $h(t)$, the function used in the definition of the mapping torus, is always positive.

Let 
$$
\alpha_0=\psi^*_{OB}(d\phi+e^{t-C}\theta)=h_1^0(r)\theta+h_2^0(r)d\phi,
$$
where 
$$
h_1^0(r)=e^{1-r-C}, \text{ and }h_2^0(r)=1+\frac{1}{2\pi}\int^{1-r}_0
e^{s-C}f(s)\, ds-e^{1-r-C},
$$
for $r\in ]1-\eta,1[$. Note that $h_2^0(r)$ is
constant near $r=1-\eta$. We extend $h_1^0$ and $h_2^0$ near
$r=1-\eta$ such that
$h_1^{0\prime}(r)<0,h_1^0(r)>0,h_2^{0\prime}(r)\ge 0,h_2^0(r)>0$
for $r>0$ and $h_1^0(r)=1-r^2,h_2^0(r)=r^2$ near $r=0$. We obtain
a contact open book whose monodromy is a right-handed fibered Dehn
twist.

On the other hand, let
$$
\alpha_1=\psi_{BW}^*(d\phi+\rho(t)\theta)=h_1^1(r)\theta+h_2^1(r)d\phi,
$$
where $h_1^1(r)=\rho (1-r),h_2^1(r)=1-\rho (1-r)$. Note that
$$
h_1^{1\prime}(r)<0,h_1^1(r)>0,h_2^{1\prime}(r)\ge 0,h_2^1(r)>0
$$
for $r>0$ and $h_1^1(r)=1-r^2,h_2^1(r)=r^2$ near $r=0$. The
contact forms $\alpha_0$ and $\alpha_1$ are the same near $r=1$. For a contact form $h_1(r)\theta+h_2(r)d\phi$, the following condition imposed on $h_1$ and $h_2$,
\[
h_1'(r)<0,
h_1(r)>0, h_2'(r)\ge 0, h_2(r)>0
\] for $r>0$ and 
\[
h_1(r)=1-r^2,
h_2(r)=r^2
\] near $r=0$, is a convex condition. Thus we can connect
$\alpha_0$ and $\alpha_1$ by $(1-s)\alpha_0+s\alpha_1$. Then use
Gray stability to see that the associated contact structures are
contactomorphic. Hence we deform the contact form on $P\times {\mathring D}^2$ to obtain a compatible open book.

\subsection{Summary}
We summarize these results in the following theorem.
\begin{theorem}
\label{thm:Boothby_Wang_manifold_open_book}
    Let $W$ be a Weinstein domain with boundary $\partial W$ given by a Boothby--Wang bundle $P$ over $H$.
Let $\tau$ be a fibered Dehn twist on $W$ along the boundary $\partial W=P$. Then $\open(W, \tau)$
is contactomorphic to the Boothby--Wang bundle over the symplectic
manifold $(M, \omega)$ as constructed in
Section~\ref{sec:symplectic construct}.
\end{theorem}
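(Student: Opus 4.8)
The plan is to assemble the piecewise constructions of Section~\ref{sec:abs open book}, organized by the commutative diagram~\eqref{eq:diagram_Boothby-Wang}, into a single global statement. First I would identify the middle row of that diagram with the Boothby--Wang bundle over $(M,\omega)$. Recall that $M$ is glued from the disk bundle $\nu=P\times_{S^1}{\mathring D^2}$, the middle piece $P\times I$, and $W$ along the symplectomorphisms $\psi_S$ and the identity, as arranged in Section~\ref{sec:symplectic construct}. Over each of these three pieces the associated prequantization circle bundle was written down explicitly: the bundle $P\times{\mathring D^2}$ with connection form $\alpha_\nu=(1-r^2)\theta+r^2d\phi$, the bundle $P\times I\times S^1$ with $d\phi+\rho(t)\theta$, and $W\times S^1$ with $d\phi+\lambda$. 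The horizontal maps $\psi_{BW}$ and the identity in the middle row are $S^1$-equivariant lifts of $\psi_S$ and the identity, and on the annular overlaps the glued forms match --- for instance $\psi_{BW}^*(d\phi+\rho(t)\theta)=\rho(1-r)\theta+\bigl(1-\rho(1-r)\bigr)d\phi$, which equals $\alpha_\nu$ near $r=0$ since there $\rho(1-r)=1-r^2$. By uniqueness of the prequantization bundle of $(M,\omega)$, the glued manifold is the Boothby--Wang bundle $P_M$ over $M$ and the glued one-form is a Boothby--Wang form.

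Next I would identify the top row with $\open(W,\tau)$. By the construction of the contact open book pieces in Section~\ref{sec:abs open book} together with the choice of twisting profile $f$ in~\eqref{eq:twisting_profile}, the manifold obtained by gluing the top row along $\psi_{OB}$ and the identity is a closed contact manifold carried by an open book with page $W$ and monodromy the right-handed fibered Dehn twist $\tau$, hence contactomorphic to $\open(W,\tau)$. The only point needing care is that, near the binding, this contact form is the pullback $\alpha_0=\psi_{OB}^*(d\phi+e^{t-C}\theta)$ rather than the standard form of Section~\ref{sec:contact_open_books}; this is exactly what Section~\ref{sec:deforming_contact_form_near_binding} handles, where $\alpha_0$ is joined to $\alpha_1=\psi_{BW}^*(d\phi+\rho(t)\theta)$ through a path of contact forms of the convex type described there, constant near $r=1$, so that Gray stability furnishes a contactomorphism between the two resulting closed contact manifolds.

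It then remains to produce the contactomorphism between the two rows, which is read off from the vertical maps of~\eqref{eq:diagram_Boothby-Wang}. On the $W\times S^1$ piece this map is the identity, intertwining $d\phi+\lambda$ with itself. On the middle piece it is $\psi_{mid}$, and by the computation of Section~\ref{sec:computing_profile} the pullback $\psi_{mid}^*(d\phi+e^{t-C}\theta)$ is a positive multiple of $d\phi+\rho(t)\theta$, so $\psi_{mid}$ carries the open book contact structure to the Boothby--Wang one. On the binding piece it is the identity, once the contact form there has been deformed from $\alpha_0$ to $\alpha_1$ as in the previous paragraph. Because the diagram commutes --- which is precisely what the profile $f$ of~\eqref{eq:twisting_profile} was engineered to achieve --- these piecewise contactomorphisms agree on the overlaps and patch to a global contactomorphism from $\open(W,\tau)$ onto $P_M$, proving the theorem.

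The step I expect to be the main obstacle is the treatment near the binding. The open book one builds directly does not carry the standard contact form near its binding, and the two natural forms there, the pullback $\alpha_0$ of the open book form and the pullback $\alpha_1$ of the Boothby--Wang form, genuinely differ; reconciling them rests on the convexity of the relevant family of contact forms and on Gray stability, and one must check that the resulting isotopy can be taken to fix a neighborhood of $r=1$ --- hence to be supported inside the binding piece --- so that it does not disturb the gluing with the middle piece. The remaining points are bookkeeping: the $S^1$-equivariance of the lifts $\psi_{BW}$ and $\psi_{mid}$, the matching of the connection and contact forms on the annular overlaps, and the verification that the profile $f$ of~\eqref{eq:twisting_profile} simultaneously makes the diagram commute and makes the monodromy an honest right-handed fibered Dehn twist.
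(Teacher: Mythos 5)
Your proposal is correct and follows essentially the same route as the paper's own proof, which is the constructive argument spread across Sections~\ref{sec:symplectic construct}--\ref{sec:deforming_contact_form_near_binding}: assemble the three pieces according to diagram~\eqref{eq:diagram_Boothby-Wang}, compute the twisting profile~\eqref{eq:twisting_profile} so the vertical maps intertwine the two rows, and reconcile the contact form near the binding by the convex interpolation $(1-s)\alpha_0+s\alpha_1$ together with Gray stability. You also correctly identify the one delicate point --- that the Gray isotopy must be supported away from $r=1$ --- which the paper handles by noting that $\alpha_0$ and $\alpha_1$ agree near $r=1$.
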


\begin{corollary}
    Let $(M, \omega)$ be a manifold with integral symplectic form $\omega$ accompanied by an adapted Donaldson hypersurface $H$ Poincar\'e dual to $[\ow]$. Then the Boothby--Wang bundle $P_M$ associated with $(M, \ow)$ has an open book decomposition whose monodromy is a right-handed fibered Dehn twist.
\end{corollary}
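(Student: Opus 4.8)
The plan is to deduce this as the $k=\ell=1$ specialization of Theorem~\ref{thm:Boothby_Wang_manifold_open_book}, so that the work lies almost entirely in assembling the hypotheses of that theorem from the triple $(M,\omega,H)$. First I would produce the Weinstein page: set $W:=M-\nu_M(H)$. Since $[H]$ is Poincar\'e dual to $[\omega]$ we are in the case $k=1$, and by Remark~\ref{remark:convex_end_M-H} (Giroux's observation) there is a neighborhood $\nu_M(H)$ for which $W$ is a compact Weinstein domain. By the description of $\nu_M(H)$ in Section~\ref{sec:nu}, the boundary $P:=\partial W$ is the unit circle bundle of the normal bundle $\nu_M(H)\cong P_{H,1}\times_{S^1}\C$, i.e.\ the Boothby--Wang bundle $P_{H,1}$ over $(H,i^*\omega)$, equipped with its Boothby--Wang connection form $\theta$ (so $d\theta=-2\pi\,\Pi_H^*(i^*\omega)$ and all Reeb orbits of $\theta$ are periodic). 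In particular the right-handed fibered Dehn twist $\tau$ on $W$ along $P$ from Section~\ref{sec:symplectomorphisms_monodromy} is defined.

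Next I would invoke the hypothesis that $H$ is an \emph{adapted} Donaldson hypersurface. By definition this means exactly that $(M,\omega)$, which is $(M,k\omega)$ with $k=1$, is recovered as a symplectic manifold by patching the three symplectic pieces $W$, the middle piece $P\times I$, and the disk bundle $\nu=P\times_{S^1}\mathring D^2$ in the manner of Section~\ref{sec:symplectic construct}. Hence Theorem~\ref{thm:Boothby_Wang_manifold_open_book} applies directly and provides a contactomorphism from $\open(W,\tau)$ to the Boothby--Wang bundle built over that patched $(M,\omega)$ in Section~\ref{sec:symplectic construct}. Because the patched symplectic form lies in the integral class $[\omega]\in H^2(M;\Z)$, and a prequantization circle bundle is unique up to isomorphism once its Euler class is fixed, this Boothby--Wang bundle is isomorphic to $P_M$.

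Finally, $\open(W,\tau)$ carries its tautological supporting open book coming from the construction of Section~\ref{sec:contact_open_books}: binding $P\times\{0\}\subset P\times\mathring D^2$, which has trivial normal bundle (as it must, by Proposition~\ref{prop:l divides k} with $\ell=k=1$), page symplectomorphic to $W$, and monodromy $\tau$, a right-handed fibered Dehn twist. Transporting this pair $(B,\Theta)$ along the contactomorphism yields the asserted open book decomposition of $P_M$ with right-handed fibered Dehn twist monodromy. The one genuine point to check — and the step I expect to be the main obstacle — is that the contactomorphism of Theorem~\ref{thm:Boothby_Wang_manifold_open_book} really lands on the \emph{global} prequantization bundle of $(M,\omega)$ and not on some other $S^1$-bundle: this is where it is essential that $H$ be adapted, so that the piecewise Boothby--Wang bundles of Section~\ref{sec:abs open book} are glued by bundle isomorphisms covering genuine symplectomorphisms and the total Euler class is forced to equal $[\omega]$. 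Everything else is bookkeeping already carried out in Sections~\ref{sec:bw}--\ref{sec:abs open book}.
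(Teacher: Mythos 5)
Your proof is correct and matches the paper's, which simply cites Theorem~\ref{thm:Boothby_Wang_manifold_open_book}: the content you unpack — that ``adapted'' means exactly that $(M,\omega)$ arises from the three-piece patching of Section~\ref{sec:symplectic construct}, so the theorem applies with $W=M-\nu(H)$, $k=\ell=1$, and the resulting Boothby--Wang bundle has Euler class $[\omega]$ and hence is $P_M$ — is precisely the intended reading. The extra care you take over identifying the output with $P_M$ (uniqueness of the prequantization bundle given its curvature class) is the right clarification and is implicit in the construction.
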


\begin{proof}
This follows from Theorem~\ref{thm:Boothby_Wang_manifold_open_book}.
\end{proof}

\subsection{Boothby--Wang orbibundles over symplectic orbifolds}
\label{sec:Boothby_Wang_orbifold}
Let us now consider a multiply fibered Dehn twist as monodromy for a contact open book.
We begin by showing that the resulting contact manifold carries an $S^1$-action generated by its Reeb field.
We copy the contact part of the diagram we used earlier,

\[
\entrymodifiers={+!!<0pt,\fontdimen22\textfont2>}
\xymatrix{%@R=30pt@C=30pt{
\text{Open book} & P\times {\mathring D^2} \ar[r]^-{\psi_{OB, N}} & P\times I \times \R \,/{\sim} \ar[r]^-{\id} & W\times S^1.\\
\text{Boothby--Wang} &  (P\times_{S^1}{\mathring D^2})  \tilde \times S^1\cong  P \times {\mathring D^2} \ar[r]^-{\psi_{BW,N}} \ar[u]^{\id} &  P\times I \times S^1 \ar[r]^-{\id}  \ar[u]^{\psi_{mid,N}} & W\times S^1  \ar[u]^{\id} \\
}
\]

For $N\in\N$, let $W\times S^1$ be endowed with the contact form
$d\phi+\frac{1}{N}\lambda$. Over the middle piece $P\times I$, the
role of $\rho$ will be taken by $\rho_N:=\frac{1}{N} \rho$. For
the twisting profile $f_N$ we take $Nf$, where $f$ is the profile
found in Section~\ref{sec:computing_profile}. We set
$h_N(t)=h(t)$. We adjust the gluing maps as follows.
\begin{align*}
\psi_{BW,N}:    P \times {\mathring D^2} \supset P\times ({\mathring D^2}-\{0\}) & \longrightarrow  P \times I \times S^1 \\
    (x, re^{i\phi}) & \longmapsto  \bigl(x \cdot (-N \phi), 1-r, \phi\bigr),
\end{align*}
and
\begin{align*}
\psi_{mid,N}:   P \times I \times S^1 & \longrightarrow  P \times I \times \R \,/{\sim} \\
    (x, t, \phi) & \longmapsto  \left(x \cdot \frac{f_N(t)\phi}{2\pi}, t, \frac{h_N(t) \phi}{2\pi}\right).
\end{align*}

If we pull back the contact form $d\phi +\rho_N(t)\theta$ by $\psi_{BW,N}$ we find the contact form
$$
\alpha_{N}=\frac{1}{N}(1-r^2)\theta+r^2 d\phi
$$
near $r=0$.

This specifies the contact form on each of the pieces on the ``Boothby--Wang'' side.
We check that all Reeb orbits are periodic.
\begin{itemize}
\item On the binding piece $P\times {\mathring D^2}$, the Reeb field of the contact form $\alpha_{N}$ is given by
$$
R_{\alpha_{N}}=NR_\theta+\partial_\phi.
$$
It generates a locally free $S^1$ action on $P \times {\mathring D^2}$. Indeed, the $S^1$ action is given by
\[
(x, v)\cdot a = (x \cdot a^N, av).
\] We see that $(x, 0)$ is fixed by $\Z_N$, while the stabilizer for any other $(x, v)$, $v\neq 0$, is trivial.
\item On the middle piece, the Reeb field of the contact form
$d\phi+\rho_N(t)\theta$ is given by $R=\partial_\phi$. 
\item On $W\times S^1$, the Reeb field of the contact form
$d\phi+\frac{1}{N}\lambda$ is given by $R=\partial_\phi$.
\end{itemize}
These contact forms fit together to a global contact form $\alpha$
with our gluing maps, so we obtain a closed contact manifold
$(Y,\alpha)$ whose Reeb orbits are all periodic. The orbits
corresponding to the binding have period $2\pi/N$, whereas all
other orbits have period $2\pi$. In particular, this implies that
the quotient of the presymplectic manifold $(Y,d\alpha)$ by the $S^1$-action is a symplectic orbifold.
\begin{theorem}
\label{thm:Boothby_Wang_orbifold_open_book}
Let $W$ be a Weinstein domain with boundary $\partial W$ given by a Boothby--Wang bundle $P$ over $H$. Let $\tau$ be a fibered Dehn twist on $W$ along the boundary $\partial W =P$. Then $\open(W,
\tau^N)$ is contactomorphic to the Boothby--Wang orbibundle over
the symplectic orbifold $(Y,d\alpha)/S^1$.
\end{theorem}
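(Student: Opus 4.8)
The plan is to reduce the statement to the $N=1$ case, Theorem~\ref{thm:Boothby_Wang_manifold_open_book}, by running the same three-piece comparison with the factors of $1/N$ carried along. First observe that the contact manifold $(Y,\alpha)$ constructed above has periodic Reeb flow, so its orbit space $(Y,d\alpha)/S^1$ is a symplectic orbifold and $Y\to(Y,d\alpha)/S^1$ is tautologically a Boothby--Wang orbibundle, with a $\Z_N$-isotropy locus along the binding and free $S^1$-action elsewhere, as recorded just before the statement. Hence it suffices to produce a contactomorphism $\open(W,\tau^N)\xrightarrow{\ \sim\ }(Y,\alpha)$.

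To do this I would first fix an explicit model of $\open(W,\tau^N)$ along the top row of the diagram above. Rescale the Liouville primitive on the page $W$ to $\tfrac1N\lambda$ (harmless, since a contact open book depends only on the deformation class of its page) and take the $W$-piece to be $W\times S^1$ with trivial monodromy and contact form $d\phi+\tfrac1N\lambda$, the middle piece to be $P\times I\times\R/{\sim}$ with the gluing relation $(x,t,\phi)\sim(x\cdot f_N(t),t,\phi+h_N(t))$ and contact form $d\phi+\tfrac1N e^{t-C}\theta$, and the binding neighbourhood $P\times\mathring D^2$ with a contact form $h_1^0(r)\theta+h_2^0(r)d\phi$ chosen exactly as in Section~\ref{sec:deforming_contact_form_near_binding} but with the symplectization coordinate rescaled by $1/N$ (so that $h_1^0(r)=\tfrac1N(1-r^2)$, $h_2^0(r)=r^2$ near $r=0$); note that the identity $h_N=h$ is forced precisely because $\tfrac1N f_N=f$. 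That these pieces glue to a closed contact manifold is the standard Giroux construction, and the only point to verify is that the resulting monodromy is $\tau^N$: since $f_N=Nf$ is monotone from $0$ to $2\pi N$, the same reasoning as in Section~\ref{sec:computing_profile} (where the profile records the \emph{inverse} monodromy) identifies the time-$2\pi$ return map of the middle piece with the $N$-th power of the fibered Dehn twist of profile $f$, i.e.\ with $\tau^N$.

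Next I would match this model with $(Y,\alpha)$ through the vertical maps of the diagram --- the identity on the $W$-piece, $\psi_{mid,N}$ on the middle piece, and the identity (under $(P\times_{S^1}\mathring D^2)\tilde\times S^1\cong P\times\mathring D^2$) on the binding piece --- and check compatibility with the gluing and with the contact forms. Commutativity is immediate: the horizontal arrows are the gluing maps, $\psi_{OB,N}=\psi_{mid,N}\circ\psi_{BW,N}$ by definition, and on the overlap of the middle piece with the $W$-piece one has $f_N\equiv0$ and $h_N\equiv2\pi$, so $\psi_{mid,N}$ restricts to the identity there. For the contact forms: on the $W$-piece the two forms literally agree; on the middle piece the computation of Section~\ref{sec:computing_profile}, now with the profile equation $\rho(t)=2\pi e^{t-C}/(2\pi+\int_0^t e^{s-C}f(s)\,ds)$ applied to the rescaled form, gives
\[
\psi_{mid,N}^*\!\left(d\phi+\tfrac1N e^{t-C}\theta\right)=\frac{2\pi+\int_0^t e^{s-C}f(s)\,ds}{2\pi}\,\bigl(d\phi+\rho_N(t)\theta\bigr),
\]
a positive multiple of the Boothby--Wang form, so $\psi_{mid,N}$ is a contactomorphism of middle pieces; and on the binding piece the two pulled-back forms $\alpha_0=\psi_{OB,N}^*(\text{open book form})$ and $\alpha_1=\psi_{BW,N}^*(d\phi+\rho_N(t)\theta)=\tfrac1N\rho(1-r)\theta+(1-\rho(1-r))d\phi$ agree near $r=1$ and both satisfy the convex monotonicity conditions of Section~\ref{sec:deforming_contact_form_near_binding} with the endpoint normalisation $h_1(r)=\tfrac1N(1-r^2)$, $h_2(r)=r^2$ near $r=0$; joining them by the straight-line path $(1-s)\alpha_0+s\alpha_1$ and applying Gray stability produces a contactomorphism of binding pieces that is the identity near $r=1$, since the path is stationary there.

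The three local contactomorphisms agree on overlaps, hence patch to a global contactomorphism $(Y,\alpha)\xrightarrow{\ \sim\ }\open(W,\tau^N)$, which proves the theorem. The step that requires genuine care rather than bookkeeping is the last one: one must arrange the Gray-stability isotopy on the binding neighbourhood to be supported away from $\{r=1\}$ so that it glues with the identity and with $\psi_{mid,N}$ on the adjacent pieces; this is exactly what the equality $\alpha_0=\alpha_1$ near $r=1$, and hence the stationarity of the straight-line homotopy there, provides, just as in the proof of Theorem~\ref{thm:Boothby_Wang_manifold_open_book}. Everything else is that proof with the symplectization coordinate rescaled by $1/N$ and the free binding $S^1$-action replaced by its locally free $N$-fold version $(x,v)\cdot a=(x\cdot a^N,av)$.
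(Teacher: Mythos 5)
Your proof is correct and follows the same three-piece construction that the paper itself uses: Section~\ref{sec:Boothby_Wang_orbifold} rescales the contact forms by $1/N$, replaces $\rho$ by $\rho_N=\rho/N$ and $f$ by $f_N=Nf$ (noting $h_N=h$), and modifies the gluing maps to $\psi_{BW,N}$, $\psi_{mid,N}$, with the contactomorphism argument inherited from the $N=1$ case in Sections~\ref{sec:symplectic construct}--\ref{sec:deforming_contact_form_near_binding}. Your explicit verification of the pullback identity $\psi_{mid,N}^*(d\phi+\tfrac{1}{N}e^{t-C}\theta)$ being a positive multiple of $d\phi+\rho_N\theta$, and of the Gray-stability isotopy on the binding piece being stationary near $r=1$ so that it glues with the identity, makes precise the bookkeeping the paper leaves implicit.
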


% =====================================================
\section{Applications}
% =====================================================
\label{sec:applications}

We conclude this paper with some applications of the above open book decompositions and the mean Euler characteristic.
We consider certain Boothby--Wang orbibundles $P_M$ over symplectic orbifolds $M$.
By the correspondence from Theorem~\ref{thm:Boothby_Wang_orbifold_open_book} we can use contact invariants to deduce non-triviality of fibered Dehn twists.

We shall consider two cases.
Suppose $(M, \omega)$ is an integral symplectic manifold with an
adapted Donaldson hypersurface $H$ and $\tau$ is a right-handed fibered Dehn twist.
\begin{itemize}
\item If $\pi_2(M)=0$, then $\tau$ is not symplectically isotopic to the identity relative to the boundary.
This case was already considered by Biran and Giroux \cite{Biran_Giroux:fibered_Dehn}: they used Lagrangian Floer homology to prove this result; we shall give a different argument.
\item  If $c_1(M)=c [\omega]$, then the mean Euler characteristic and index-positivity give an efficient criterion to see whether fibered Dehn twists are symplectically isotopic to the identity relative to the boundary.
\end{itemize}

\subsection{Non-contractible fibers and $\pi_2(M)=0$}

\begin{theorem}[Biran and Giroux]
\label{thm:pi_2_fibered_twist_not_isotopic_id}
Let $W$ be a Weinstein domain whose boundary is a Boothby--Wang contact manifold $(P,\theta)$ over a symplectic manifold $H$.
Suppose that the integral symplectic manifold $M$, obtained via the construction in Section~\ref{sec:symplectic construct}, satisfies $\pi_2(M)=0$.
Then a right-handed fibered Dehn twist $\tau$ along $P=\partial W$ is not symplectically isotopic to the identity relative to the boundary.
\end{theorem}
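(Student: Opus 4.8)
The plan is to argue by contradiction: a hypothetical symplectic isotopy of $\tau$ to the identity relative to $\partial W$ would exhibit the Boothby--Wang bundle over $M$ as subcritically Stein fillable, and when $\pi_2(M)=0$ this contradicts the Reeb dynamics of its Boothby--Wang form, which has no contractible closed orbit.

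First I would record that a symplectic isotopy of $\tau$ to the identity through symplectomorphisms supported away from $\partial W$ induces a contactomorphism $\open(W,\tau)\cong\open(W,\id)$. By the explicit form of the construction with trivial monodromy we have $\open(W,\id)=\partial(W\times D^2)$, and by Eliashberg's theorem $W$ may be taken Stein, so that $W':=W\times D^2$ is a subcritical Stein domain; this is the easy direction of Theorem~\ref{thm:subcritical_open_book}. On the other hand, Theorem~\ref{thm:Boothby_Wang_manifold_open_book} identifies $\open(W,\tau)$ with the Boothby--Wang bundle $P_M$ over $(M,\omega)$, carrying its Boothby--Wang contact structure $\xi_M$. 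Hence, under the standing assumption, $(P_M,\xi_M)$ is filled by the subcritical Stein domain $W'$, and we may transport the Boothby--Wang connection form $\theta_M$ to a contact form for $\xi_M$ on $\partial W'$.

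Next I would analyse the Reeb dynamics of $\theta_M$: its closed Reeb orbits are exactly the fibres $\delta$ of $\Pi\colon P_M\to M$ and their iterates $\delta^k$, $k\ge1$. The homotopy exact sequence of the circle bundle $S^1\to P_M\xrightarrow{\Pi}M$ contains $\pi_2(M)\to\pi_1(S^1)\to\pi_1(P_M)$, so $\pi_2(M)=0$ forces $[\delta]$ to have infinite order in $\pi_1(P_M)$, whence no closed Reeb orbit of $\theta_M$ is contractible in $P_M$. A van Kampen computation, using the decomposition $\partial(W\times D^2)=(W\times S^1)\cup_{\partial W\times S^1}(\partial W\times D^2)$, shows that the inclusion $\partial W'\hookrightarrow W'$ induces an isomorphism on $\pi_1$ (it is intertwined with the projection $W\times D^2\to W$), so no closed Reeb orbit of $\theta_M$ is contractible in $W'$ either. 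Since positive symplectic homology in the class of contractible loops is generated --- in the Morse--Bott picture of $\theta_M$, or after a small admissible perturbation --- by closed Reeb orbits contractible in $W'$, it follows that $SH^+_*(W')=0$. But $SH_*(W')=0$ by Cieliebak's vanishing theorem \cite{Cieliebak:vanishingSH}, so the Viterbo exact sequence \cite{Viterbo:Floer_applications} forces $SH^+_*(W')$ to be isomorphic, up to a degree shift, to $H_*(W',\partial W')$, which is nonzero. This contradiction shows that $\tau$ is not symplectically isotopic to the identity relative to $\partial W$.

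I expect the main obstacle to be the care needed in the last step, namely that $SH^+$ detects only those Reeb orbits that are contractible \emph{in the filling} --- which is exactly why the isomorphism $\pi_1(\partial W')\cong\pi_1(W')$ is required --- and one should phrase the homological comparison as an ungraded statement (or work over $\Z_2$) so as not to assume that $c_1$ is torsion, an assumption that can genuinely fail even when $\pi_2(M)=0$. A shorter alternative, which I would also mention, replaces the symplectic homology input by the displaceability principle quoted in the introduction: every compact subset of a subcritical Stein manifold is Hamiltonianly displaceable, so a contact-type hypersurface in the completion of $W'$ carries a contractible closed characteristic, and together with the same $\pi_1$ computation this contradicts the absence of contractible Reeb orbits for $\theta_M$.
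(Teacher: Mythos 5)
Your proof is correct, and the overall strategy is the same as the paper's: assume $\tau$ is symplectically isotopic to the identity relative to $\partial W$, conclude that $P_M\cong\open(W,\tau)\cong\open(W,\id)$ is subcritically Stein filled by $W\times D^2$, use $\pi_2(M)=0$ to show $P_M$ has no contractible closed Reeb orbits, then derive a contradiction from the fact that subcritically filled contact manifolds must have one. The two places where your route diverges from the paper's are both legitimate variants. For the $\pi_1$ comparison, you use van Kampen applied to $\partial(W\times D^2)=(W\times S^1)\cup_{\partial W\times S^1}(\partial W\times D^2)$, whereas the paper observes that $W\times D^2$ is obtained from $[0,1]\times P_M$ by attaching handles of index $\ge 3$ (since the filling is subcritical and $\dim W\times D^2\ge 4$); both give that $\pi_1(P_M)\to\pi_1(W\times D^2)$ is an isomorphism. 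For the existence of a contractible Reeb orbit, you derive the contradiction directly from symplectic homology --- $SH^+_*(W\times D^2)$ would vanish if there were no contractible Reeb orbits, but Cieliebak's vanishing $SH_*(W\times D^2)=0$ together with Viterbo's exact sequence forces $SH^+_*(W\times D^2)\cong H_{*+n-1}(W\times D^2,\partial)\neq 0$ --- whereas the paper invokes Corollary~3 of Frauenfelder--Schlenk \cite{Frauenfelder:hamiltonian_dynamics} as a black box. Your ``shorter alternative'' via displaceability is in fact closer in spirit to the paper's cited result, which itself rests on the displaceability of compact subsets of subcritical Stein manifolds. Your caution about $c_1$ not being torsion is well-placed and correctly resolved: the Viterbo sequence and the vanishing of $SH$ hold ungraded, and nonvanishing of $H_*(W\times D^2,\partial)$ is all that is needed.
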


\begin{remark}
Alternatively, we could take any integral symplectic manifold $(M,\omega)$ with $\pi_2(M)=0$ and find an adapted Donaldson hypersurface $H$ in $M$; its complement $W:=M-\nu(H)$ then satisfies the above condition.
\end{remark}

\begin{proof}
By Theorem~\ref{thm:Boothby_Wang_manifold_open_book} it follows that $\open(W,\tau)$ is contactomorphic to the Boothby--Wang bundle $P_M$ over $M$, whose periodic Reeb orbits are exactly the $S^1$-fibers.
The homotopy exact sequence for the fibration $S^1\to P_M\to M$ shows us that each fiber is non-contractible,
$$
0\cong \pi_2(M) \stackrel{p_*}{\longrightarrow} \pi_1(S^1) \longrightarrow \pi_1(P_M),
$$
so the condition that $\pi_2(M)=0$ implies that all Reeb orbits are non-contractible in $P_M$.

Assume that $\tau$ is symplectically isotopic to the identity relative to the boundary.
Then the following contact open books are contactomorphic
$$
\open(W,\id) \cong \open(W,\tau).
$$
By Theorem~\ref{thm:subcritical_open_book}, it follows that $P_M\cong \open(W,\tau)$ is subcritically Stein fillable.
We claim that then every contact form for the contact structure on $P_M$ must have contractible Reeb orbits.

Corollary~3 from \cite{Frauenfelder:hamiltonian_dynamics} implies
that $P_M$ has a Reeb orbit $\gamma$ that is contractible in its
subcritical filling $W\times D^2$. See also
\cite{Viterbo:Floer_applications}. To see that this orbit is also
contractible in the boundary $P_M$, we use that in our setup $\dim
P_M\geq 3$, so the filling has dimension at least $4$. Since the
subcritical filling $W\times D^2$ can be obtained from
$[0,1]\times P_M$ by attaching handles of index $\ge 3$, we see
that $$i_*:\pi_1(P_M)\longrightarrow\pi_1(W\times D^2)$$ is an
isomorphism. This gives the existence of a contractible Reeb orbit
in $P_M$, which contradicts our earlier observation that the
Boothby-Wang bundle $P_M$ does not have any periodic contractible
Reeb orbits.
\end{proof}

\subsection{Powers of fibered Dehn twists}
Next, we shall distinguish powers of fibered Dehn twists.
We need a few lemmas that all use the following setup and notation.
\begin{setupS}{Setup {\bf S}}
\label{setupS}
\begin{enumerate}
\item{} Following Section~\ref{sec:Boothby_Wang_orbifold}, construct a Boothby--Wang orbibundle by the taking an integral symplectic manifold $M$ with an adapted Donaldson hypersurface $H$ that is Poincar\'e dual to $k[\omega]$, where $[\omega]\in H^2(M;\Z)$ is primitive.
It follows that we can give $W:=M-\nu_M(H)$ a Weinstein structure.
Assume in addition that $M$ and $H$ are simply-connected, and that $\dim M=2n-2 \geq 6$.
We denote the Boothby-Wang bundle over $(H,k[\omega|_H])$ by $P$. This is also the contact boundary of $W$.
\item For a positive integer $N$, define the contact open book $(P_N,\theta_N):=\open(W,\tau^N)$: this is a Boothby--Wang orbibundle over the symplectic orbifold $M_N$. As a topological space, we have $M_N\cong P_N/S^1$.
\item the Chern class of $M_1=M$ can be written as $c_1(M)=c[\omega]$.
\end{enumerate}
\end{setupS}
We will call these assumptions setup {\bf S}.
Note that the Boothby--Wang orbibundle obtained this way satisfies conditions P1 and P2.
To see the last claim, we have the following lemma.
\begin{lemma}
The first Chern class of the contact structure in setup {\bf S} is torsion.
\end{lemma}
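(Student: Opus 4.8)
The plan is to work with rational coefficients and prove that $c_1(\xi_N)\otimes\mathbb Q=0$ in $H^2(P_N;\mathbb Q)$, which is equivalent to $c_1(\xi_N)$ being a torsion class. I would use the explicit open book decomposition of $P_N=\open(W,\tau^N)$ from Section~\ref{sec:Boothby_Wang_orbifold}, writing $P_N=A\cup B_W$ where $A$ is an open thickening of the mapping torus $A_{(W,\tau^N)}$ (a $W$-bundle over $S^1$), $B_W=P\times D^2$ is a tubular neighbourhood of the binding, and $A\cap B_W\simeq P\times S^1$. The strategy is: (i) show $c_1(\xi_N)$ restricts to a rationally trivial class on each of $A$ and $B_W$; (ii) assemble via Mayer--Vietoris, using that the relevant first cohomology groups are rationally trivial.

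On the binding side, the contact form near the binding has the shape $h_1(r)\theta+h_2(r)d\phi$ with $h_1(0)\neq 0$ and $h_2$ vanishing to second order at $r=0$, so the contact distribution restricted to $P\times\{0\}$ splits as $\xi_N|_{P\times\{0\}}\cong\xi_P\oplus\underline{\mathbb C}$ with $\xi_P=\ker\theta$; since $B_W$ deformation retracts onto $P\times\{0\}$ this gives $c_1(\xi_N)|_{B_W}=c_1(\xi_P)$ (pulled back to $B_W\simeq P$). As $(P,\theta)$ is a Boothby--Wang bundle over $(H,k[\omega|_H])$ we have $\xi_P\cong\Pi_H^*TH$, hence $c_1(\xi_P)=\Pi_H^*c_1(H)$, and the adjunction formula for $H\subset M$ gives $c_1(H)=c_1(M)|_H-c_1(\nu_M(H))=(c-k)[\omega|_H]$. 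Finally $\Pi_H^*(k[\omega|_H])=-\frac{1}{2\pi}[d\theta]=0$ in de Rham cohomology, so $\Pi_H^*[\omega|_H]$, and therefore $c_1(\xi_N)|_{B_W}$, is rationally trivial.

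On the page-bundle side, over each of the pieces making up $A$ the contact form has the shape $d\phi+\beta$ with $\beta$ pulled back from the page factor, so $\xi_N|_A$ is identified with the vertical tangent bundle of $A\to S^1$; restricting to a fibre $W$ gives $c_1(\xi_N)|_W=c_1(TW)=c_1(M)|_W=c\,[\omega]|_W$. Now $k[\omega]|_W=(\mathrm{PD}_M[H])|_W=0$, because $\mathrm{PD}_M[H]$ already dies in $H^2(M\setminus H;\mathbb Z)\supset H^2(W;\mathbb Z)$ (it is the image of a Thom class under $H^2(M,M\setminus H)\to H^2(M)$), so $c_1(\xi_N)|_W=0$ rationally. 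To promote this to vanishing on all of $A$ I would invoke the Wang sequence of $W\to A\to S^1$, whose connecting term shows $\ker\big(H^2(A;\mathbb Q)\to H^2(W;\mathbb Q)\big)\cong H^1(W;\mathbb Q)$; and $H^1(W;\mathbb Q)=0$ follows from the Mayer--Vietoris sequence for $M=W\cup\nu_M(H)$ together with $H^1(M;\mathbb Q)=H^1(H;\mathbb Q)=0$ and $H^1(\partial\nu_M(H);\mathbb Q)=H^1(P;\mathbb Q)=0$, the last by the Gysin sequence over the simply connected base $H$.

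It remains to assemble these facts: in the Mayer--Vietoris sequence for $P_N=A\cup B_W$, the class $c_1(\xi_N)$ lies in the kernel of $H^2(P_N;\mathbb Q)\to H^2(A;\mathbb Q)\oplus H^2(B_W;\mathbb Q)$, which is the image of $H^1(A\cap B_W;\mathbb Q)=H^1(P\times S^1;\mathbb Q)\cong\mathbb Q$; but this $\mathbb Q$ is generated by the base/meridian circle class, which is already in the image of $H^1(A;\mathbb Q)$, so the connecting map vanishes and $c_1(\xi_N)\otimes\mathbb Q=0$. I expect the only genuinely delicate point to be the identification of $\xi_N$ with the vertical tangent bundle over the mapping-torus piece together with the Wang-sequence step that upgrades fibrewise vanishing to vanishing on all of $A$; the rest is routine bookkeeping with the Gysin and adjunction formulas. (Alternatively, one can invoke Theorem~\ref{thm:Boothby_Wang_orbifold_open_book}: $P_N\to M_N$ is a Boothby--Wang orbibundle, so $c_1(\xi_N)=\Pi^*c_1^{orb}(M_N)$, and the orbifold adjunction formula along the $\mathbb Z_N$-orbifold locus $H$ gives $c_1^{orb}(M_N)$ as a rational multiple of $[\omega_{M_N}]$, whose pullback $-\frac{1}{2\pi}[d\theta_N]$ is exact — but this needs the same input in orbifold language.)
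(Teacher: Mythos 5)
Your proof is correct and follows essentially the same route as the paper: the paper likewise decomposes $P_N$ into the binding neighbourhood $P\times {\mathring D^2}$ and the mapping torus of $\tau^N$, shows $c_1(\xi_N)$ restricts to a torsion class on each piece, and uses the Mayer--Vietoris sequence (via the $H^1$ terms) to get injectivity of the restriction map on $H^2$. Your rational-coefficient bookkeeping (splitting $\xi_N\cong\xi_P\oplus\underline{\C}$ at the binding, the vertical-bundle identification, the Wang sequence together with $H^1(W;\Q)=0$) merely spells out details the paper leaves terse, the only other difference being that the paper handles $N=1$ separately through the Gysin sequence of the honest Boothby--Wang bundle $P_1\to M$.
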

\begin{proof}
First consider $N=1$. Then $P_1$ is a Boothby--Wang bundle over the symplectic manifold $M$.
Consider a part of the Gysin sequence for the circle bundle $S^1\to P_1\to M$,
$$
H^0(M) \stackrel{\cup k[\omega]}{\longrightarrow} H^2(M)
\stackrel{\pi^*}{\longrightarrow} H^2(P_1).
$$
We have $\xi_1\cong \pi^* TM$, so we see that $c_1(\xi)=-\pi^*c_1(TM)=-\pi^*c[\omega]$ is torsion, since $k\neq 0$.

For $N>1$, we use the Mayer-Vietoris sequence. Put $A_N=P\times {\mathring D^2}$, and let $B_N$ be the mapping torus of $W$ with monodromy $\tau^N$.
Noting that $A_N\cap B_N \simeq P\times S^1$ we find
$$
\underset{\cong 0}{H^1(A_N)}\oplus \underset{\cong \Z}{H^1(B_N)} \stackrel{i^1}{\longrightarrow} \underset{\cong \Z}{H^1(A_N\cap B_N)} \longrightarrow
H^2(P_N) \stackrel{j^2}{\longrightarrow} H^2(A_N) \oplus H^2(B_N).
$$
The map $i^1$ is an isomorphism, so $j^2$ is injective.
Now observe that the restriction of $c_1(\xi_N)$ to both $A_N$ and $B_N$ is a torsion class.
Indeed, the contact structure over $B_N$ is a Boothby--Wang bundle for an exact, symplectic manifold, and the restriction to $A_N$, a neighborhood of the binding, has the same Chern class as in the case $N=1$.
\end{proof}

\begin{lemma}[Mean index for Boothby-Wang orbibundles]
\label{lemma:mean_index_BW_orbi}
Suppose we have the setup {\bf S} as above. Then the Maslov index of a $k$-fold cover of a principal orbit is equal to
$$
2(N(c-k)+k).
$$
\end{lemma}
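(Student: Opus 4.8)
The plan is to compute $\mu_{CZ}(\gamma^k)$, where $\gamma$ is a principal orbit, by capping $\gamma^k$ with an explicit disk that lies entirely in the binding region $P\times{\mathring D^2}$ and then evaluating the Robbin--Salamon index of the linearized Reeb flow along $\gamma^k$ in a trivialization of $\xi_N$ over that disk. Near the binding the contact form is $\alpha_N=\frac1N(1-r^2)\theta+r^2\,d\phi$ with Reeb field $NR_\theta+\partial_\phi$, so I may place $\gamma$ at a small radius $r=\varepsilon$; then $\gamma^k$ is the loop $t\mapsto(x_0\cdot Nt,\ \varepsilon e^{it})$, $t\in[0,2\pi k]$, whose ${\mathring D^2}$-component winds $k$ times around $0$ and whose $P$-component is the fibre of $P\to H$ over $\Pi_H(x_0)$ traversed $Nk$ times. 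Recall from Setup~{\bf S} that $M$ and $H$ are simply connected, $\dim M\ge 6$, $[\omega]$ is primitive and $c_1(M)=c[\omega]$. Simple connectivity of $H$, primitivity of $[\omega]$ and the Lefschetz hyperplane theorem (where $\dim M\ge 6$ is used) produce a spherical class $\sigma_H\in H_2(H;\Z)$ with $\langle[\omega],i_*\sigma_H\rangle=1$; in particular the fibre of $P\to H$ has order $k$ in $\pi_1(P)$, so $\pi_1(P\times{\mathring D^2})\cong\pi_1(P)\cong\Z_k$, generated by a fibre. Since $\gamma^k$ is $Nk$ times a fibre in the $P$-direction, with $Nk$ a multiple of $k$, and its ${\mathring D^2}$-winding is null-homotopic in ${\mathring D^2}$, the loop $\gamma^k$ bounds a disk $D$ inside $P\times{\mathring D^2}$. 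Thus one never leaves the binding region, and the region $W\times S^1$ --- where $\partial_\phi$ acts as the identity on $\xi_N\cong TW$ --- contributes nothing.

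Concretely I would build $D$ by joining a disk $D_H\subset P$ with boundary the $Nk$-fold fibre over $\Pi_H(x_0)$ to a disk filling the ${\mathring D^2}$-direction, interpolating across the radial coordinate. Since $P\to H$ has Euler class $i^*(k[\omega])$, which evaluates to $k$ on $\sigma_H$, the $Nk$-fold fibre bounds, and $D_H$ projects to the class $N\sigma_H$ in $H_2(H)$. Along $\gamma^k$ the subbundle $\ker\theta\subset\xi_N$ is symplectic and invariant under the linearized Reeb flow, and so is its $d\alpha_N$-orthogonal complement $\ker\theta^{\perp}\subset\xi_N$; over all of $P\times{\mathring D^2}$ the latter is identified, via the Cartesian frame of ${\mathring D^2}$, with a trivial rank-$2$ bundle, and this splitting extends over $D$. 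Writing $\psi$ for the linearized Reeb flow along $\gamma^k$ on $\xi_N$, additivity of the Robbin--Salamon index under direct sums (see \cite{Robbin:Maslovindex}) then gives
\[
\mu_{CZ}(\gamma^k)=\mu(\psi|_{\ker\theta})+\mu(\psi|_{\ker\theta^{\perp}}).
\]

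On $\ker\theta^{\perp}$ the flow $\psi$ is the rotation $v\mapsto e^{it}v$, which over $t\in[0,2\pi k]$ makes $k$ positively oriented full turns and so contributes $2k$. On $\ker\theta\cong\Pi_H^*TH$ the flow $\psi$ is, with respect to the connection, the identity, so relative to the trivialization over $D$ it contributes the clutching number $2\langle c_1(TH),N\sigma_H\rangle$; this is the standard Boothby--Wang Maslov index computation in the fibre directions of $P\to H$, by the same argument as Lemma~\ref{lemma:degree_shift_BW} (see \cite{Bourgeois:thesis}, Chapter~9), with the sign normalized so that $N=1$ returns $2c$. Finally, $TM|_H=TH\oplus\nu_M(H)$ with $c_1(\nu_M(H))=i^*(k[\omega])$ (Section~\ref{sec:nu}) and $c_1(M)=c[\omega]$ yield $c_1(TH)=i^*((c-k)[\omega])$, hence $\langle c_1(TH),N\sigma_H\rangle=N(c-k)$ and
\[
\mu_{CZ}(\gamma^k)=2N(c-k)+2k=2\bigl(N(c-k)+k\bigr).
\]
As sanity checks, $N=1$ gives $2c$, agreeing with Lemma~\ref{lemma:degree_shift_BW}, and $2(N(c-k)+k)$ is divisible by $2\gcd(N,k)$, consistent with the iteration formula $\mu(\gamma^k)=\gcd(N,k)\,\mu_P$.

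The step I expect to be the main obstacle is the trivialization bookkeeping: joining the fibre-direction disk $D_H$ to the disk filling the ${\mathring D^2}$-direction along their common boundary circle, checking that the splitting $\xi_N=\ker\theta\oplus\ker\theta^{\perp}$ and the Cartesian frame on ${\mathring D^2}$ extend across all of $D$, and confirming that the $\ker\theta$-contribution really is $2\langle c_1(TH),N\sigma_H\rangle$ with the correct sign. Organizing the computation through the Morse--Bott/fibration structure of $P_N$ and through the direct-sum and catenation properties of the Robbin--Salamon index (Remark~\ref{rem:CZ-index}(5)) should keep this under control.
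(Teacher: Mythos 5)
Your proposal is correct and follows essentially the same route as the paper: work in the binding model $(P\times{\mathring D^2},\ \tfrac1N(1-r^2)\theta+r^2d\phi)$, split $\xi_N$ along the orbit into the pullback of $\ker\theta\subset TP$ and a rank-$2$ complement framed by (modified) disk directions, observe the linearized flow is block diagonal, and add the contribution $2N(c-k)$ from the Boothby--Wang computation over $H$ (via $c_1(TH)=(c-k)i^*[\omega]$ and Lemma~\ref{lemma:degree_shift_BW}) to the $2k$ coming from the disk rotation. The only differences are cosmetic: you get surjectivity of $H_2(H)\to H_2(M)$ from the Lefschetz hyperplane theorem rather than the Weinstein-handle argument, and you cap the $Nk$-fold fiber directly instead of iterating a cap of the $k$-fold fiber, which changes nothing since $c_1(\xi_N)$ is torsion and $c_1(TH)$ is proportional to $i^*[\omega]$.
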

\begin{proof}
In a neighborhood of $H\subset M_N$, the Boothby--Wang orbibundle looks like
$$
(P\times {\mathring D^2}, \frac{1}{N}(1-r^2)\theta+r^2d\phi),
$$
where $P$ is the Boothby-Wang bundle over $H$. The Reeb field is given by
$$
R=N R_\theta+\partial_\phi.
$$
Its flow is given by $Fl^R_t(x,z)=(x\cdot Nt,e^{it}z)$.
Now write $i:H\subset M$ for the inclusion. Observe that $P$ is an $S^1$-bundle over $H$ with Euler class $i^*k[\omega]$ and that $M-\nu(H)=W$ is Weinstein. Hence the dimension condition $\dim M\geq 6$ guarantees that the map $i_*:H_2(H)\to H_2(M)$ is surjective. This implies that $i^*[\omega]$ is primitive. It follows that $\pi_1(P)\cong \Z_k$.
As in the proof of Theorem~\ref{thm:pi_2_fibered_twist_not_isotopic_id}, the inclusion $P\to W$ induces an isomorphism on $\pi_1$.
With a Seifert-Van Kampen argument we see that $\pi_1( \open(W,\tau^N) \,)\cong \Z_k$: generators are simple exceptional orbits.
A $k$-fold cover of any periodic orbit $\gamma$ is hence contractible.

Given a trivialization $\epsilon$ of the contact structure along a capping disk of a $k$-fold cover of $\gamma$ in $P$ we construct a trivialization of the contact structure on $P\times {\mathring D^2}$ by using the additional vector fields with coordinates $(x,y)$ for the open disk ${\mathring D^2}$,
$$
X=\frac{1}{N}(1-x^2-y^2) \partial_x+y R_\theta, \quad Y=\frac{1}{N}(1-x^2-y^2) \partial_y-x R_\theta.
$$
The symplectic trivialization $\epsilon\oplus span(X,Y)$ extends over a disk spanning a $k$-fold covered orbit in $P\times {\mathring D^2}$.

With respect to this trivialization we can write down a path of symplectic matrices describing the linearized flow.
First of all, let $\gamma_k$ be a $k$-fold cover of a simple periodic Reeb orbit in $(P,\theta)$.
Let $\psi(t)$ be the matrix representation of the linearized time-$t$ flow along $\gamma_k$ with respect to the trivialization $\epsilon$.
We can then compute the linearized flow of a $k$-fold cover of a principal orbit in $P\times {\mathring D^2}$ with respect to the above trivialization.
The result is
$$
\psi_{(\gamma_k(Nt),e^{ikt }z_0)}=
\left(
\begin{array}{cc}
\psi(Nt) & 0 \\
0 & e^{ikt}
\end{array}
\right)
.
$$
We see that the Maslov index of the periodic Reeb orbit $(\gamma_k(Nt),e^{ikt }z_0)$ is given by
$$
\mu( (\gamma_k(Nt),e^{ikt }z_0),t\in[0,2\pi])=
\mu( \gamma_k(t),t\in[0,2N\pi])+2k.
$$
To compute the Maslov index, we determine the first Chern class of $H$
$$
c_1(H)=c_1(i^*TM)-c_1(\nu_M(H)\,)=(c-k)i^*[\omega].
$$
With the chosen trivialization, we apply Lemma~\ref{lemma:degree_shift_BW} to compute $\mu( \gamma_k(t),t\in[0,2N\pi])=2(c-k)N$. Hence
$$
\mu( (\gamma_k(Nt),e^{ikt }z_0),t\in[0,2\pi])=2 (c-k)N+2k.
$$
We conclude that the Maslov index of a $k$-fold cover of a principal orbit is $2(c-k)N+2k$.
\end{proof}

\begin{remark}
\label{rem:good_orbits}
We see directly from this Lemma that all principal orbits are good (that means not bad) in setup {\bf S}, as all Maslov indices of these orbits are even. Looking at the proof also shows that the exceptional orbits are good.
\end{remark}

\begin{lemma}
\label{lemma:negative_c_BW_orbi}
Suppose we have the setup {\bf S}. If $c<k$, then $\tau^N$ is not symplectically isotopic to the identity relative to the boundary.
\end{lemma}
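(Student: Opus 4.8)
The plan is to argue by contradiction, extracting rigidity from subcritical fillability. Suppose $\tau^N$ is symplectically isotopic to the identity relative to $\partial W$. Then $\open(W,\tau^N)\cong\open(W,\id)$, so by Theorem~\ref{thm:subcritical_open_book} the Boothby--Wang orbibundle $P_N$ is subcritically Stein fillable; take the subcritical filling $W':=W\times D^2$ of dimension $2n$. As in the Mayer--Vietoris and Gysin arguments already used in this section, the fact that $[H]$ is Poincar\'e dual to $k[\omega]$ gives $k\cdot i_W^*[\omega]=0$ in $H^2(W;\Z)$, so that $c_1(W')=c_1(W)=c\,i_W^*[\omega]$ is torsion, and $\pi_1(P_N)\hookrightarrow\pi_1(W')$ exactly as in the proof of Theorem~\ref{thm:pi_2_fibered_twist_not_isotopic_id}. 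Thus Proposition~\ref{proposition:mean_euler_subcritical} applies: $SH^{S^1,+}_*(W')\cong H_{*+n-1}(W',\partial W';\Q)\otimes H_*(\C P^\infty;\Q)$. Since $W$ is a Weinstein domain of dimension $2n-2$ it has the homotopy type of a CW complex of dimension at most $n-1$, so $H^i(W)=0$ for $i>n-1$ and, by Lefschetz duality, $H_j(W',\partial W')\cong H^{2n-j}(W)$ vanishes for $j\le n$. In particular $SH^{S^1,+}_*(W')$ is index-positive, is finite dimensional in each degree and supported in degrees $\ge n+1$, and — because $H^0(W)\cong\Q$ contributes $H_{2n}(W',\partial W')\cong\Q$, which then propagates through the $H_*(\C P^\infty)$-factor — it is nonzero in every even degree $\ge 2n$.

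The second step is to compute $SH^{S^1,+}_*(W')$ from the Reeb dynamics on $P_N$. The Reeb flow is periodic and, by Remark~\ref{rem:good_orbits}, every contractible Reeb orbit is good, so one may run the Morse--Bott spectral sequence of Proposition~\ref{prop:mean_euler_S^1-orbibundle} converging to $SH^{S^1,+}_*(W')$; its $E^1$-page is a direct sum of blocks $H^{S^1}_q(N_T;\Q)$ over orbit spaces $N_T$ of contractible orbits, placed in total degree $\mu(N_T)-\tfrac12\dim(N_T/S^1)+q$. By Lemma~\ref{lemma:mean_index_BW_orbi}, together with the exact iteration formula (valid since the linearized return map of a contractible cover is the identity), the smallest contractible cover of a principal orbit yields a block whose $H_0$-class lies in total degree $\mu_P-(n-1)$ with $\mu_P=2\bigl(N(c-k)+k\bigr)/\gcd(N,k)$, while the smallest contractible covers of exceptional orbits give blocks in total degrees $\mu_{jE}-(n-2)$, with leading Maslov indices again governed by Lemmas~\ref{lemma:degree_shift_BW} and~\ref{lemma:mean_index_BW_orbi}. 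A degree count shows the bottom class of the lowest such block is a permanent cycle: no differential can hit it, since an incoming differential would have to originate from a negative $q$-slot, and none can leave it, since the target would lie in a non-existent lower block.

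The final step is to invoke $c<k$ to contradict the first step. If $N(c-k)+k<0$, then every contractible Reeb orbit on $P_N$ has negative mean index, so $P_N$ is index-negative; hence $SH^{S^1,+}_*(W')$ is index-negative as well, and being simultaneously index-positive it must be finitely supported, contradicting the $H_*(\C P^\infty)$-factor. In the remaining range $0\le N(c-k)+k$ the even integer $\mu_P$ satisfies either $\mu_P\le 2n-2$ or $\mu_P\ge 2n$. In the first case the bottom class of the lowest principal block survives to $E^\infty$ in degree $\mu_P-(n-1)\le n-1$ (and when $\mu_P=0$ an entire infinite-dimensional column survives), so $SH^{S^1,+}_{\mu_P-(n-1)}(W')\neq0$, contradicting the vanishing range of step one; in the second case the principal blocks, of total-degree width $2n-2$ and spaced $\mu_P\ge 2n$ apart, leave arbitrarily high gaps, so some even degree $\ge 2n$ receives no contribution from the Reeb side, again contradicting step one. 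The hypothesis $c<k$ is precisely what drives $\mu_P$ into one of these incompatible regimes in every case; the main technical obstacle is checking that the exceptional-orbit blocks present when $k\nmid N$ cannot repair the gaps or raise the bottom class, which requires tracking $\gcd(N,k)$ and the Maslov indices $\mu_{jE}$ of the contractible covers of exceptional orbits alongside $\mu_P$.
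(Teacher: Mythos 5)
Your overall strategy — contradiction via subcritical fillability and comparison of the Morse--Bott $E^1$-page with the index-positive homology of a subcritical filling — is the right one, and matches the paper's proof in spirit. But there are two genuine gaps, and you miss the paper's one key simplification.

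First, you work with $P_N$ and $W'=W\times D^2$ directly, but $\pi_1(P_N)\cong\Z_k$ is typically nontrivial. Proposition~\ref{proposition:mean_euler_subcritical} computes the \emph{full} $SH^{S^1,+}_*(W')$, summed over all free homotopy classes, whereas the Morse--Bott spectral sequence of Proposition~\ref{prop:mean_euler_S^1-orbibundle} only sees contractible orbits. To compare the two you must first pass to the universal cover $\widetilde{P_N}$ with filling $\widetilde W\times D^2$, exactly as the paper does, so that all orbits are contractible and the two sides talk about the same thing. Without that step your Step 2 does not compute what Step 1 computes.

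Second, the degree count in your final step is both incomplete and partly wrong. The support claim in Step 1 is off: since $(SH^{S^1,+}(W'))_d\cong\bigoplus_{j\ge 0}H_{d+n-1-2j}(W',\partial W')$ and $H_j(W',\partial W')$ can already be nonzero at $j=n+1$, the group can be nonzero starting from degree $2$, not degree $n+1$; and the degrees fed by $H_{2n}(W',\partial W')\cong\Q$ are $n+1,n+3,\dots$, which have the parity of $n+1$, so your claim of ``nonzero in every even degree $\ge 2n$'' is false when $n$ is even. More seriously, you explicitly leave open the ``technical obstacle'' of tracking the exceptional-orbit blocks when $k\nmid N$ and verifying that your bottom class is really a permanent cycle; this is not a minor detail but the content of the case where $N(c-k)+k\ge 0$, i.e.\ precisely the case where the contradiction is not immediate.

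The paper sidesteps all of this with a simple trick you missed: if $\tau^N$ is isotopic to the identity relative to $\partial W$, then so is $\tau^{Nm}$ for every $m\ge 1$. Since $c<k$, the Maslov indices of covers of principal orbits, which are multiples of $2(Nm(c-k)+k)/\gcd(Nm,k)$ by Lemma~\ref{lemma:mean_index_BW_orbi}, become strictly negative for $m$ large. Then the $E^1$-page of the Morse--Bott spectral sequence for $\widetilde{W}\times D^2$ (with all orbits contractible) is bounded above in total degree, contradicting the fact that $SH^{S^1,+}_*(\widetilde{W}\times D^2)\cong H_{*+n-1}(\widetilde W,\partial\widetilde W;\Q)\otimes H_*(\C P^\infty;\Q)$ has generators in arbitrarily large degree. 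No further degree bookkeeping is needed. Replacing your third step by this iteration argument, and inserting the passage to the universal cover, yields the paper's proof.
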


\begin{proof}
Consider 
$$
P_N=\OB(W,\tau^N).
$$
We argue by contradiction, and suppose that $\tau^N$ is symplectically isotopic to the identity relative to the boundary.
Then $P_N$ is subcritically fillable by $W\times D^2$.
It follows that the universal cover, $\tilde P_N$, is subcritically fillable by $\tilde W \times D^2$. The first Chern class of $\tilde W \times D^2$ is torsion, so Proposition~\ref{proposition:mean_euler_subcritical} tells us that
$$
SH^{S^1,+}_*(\tilde W\times D^2)\cong H_{*+n-1}(\tilde W,\partial \tilde W;\Q ) \otimes H_*(\C P^\infty ;\Q),
$$
which is index-positive and has generators in arbitrarily large, positive degree.

On the other hand, $P_N$, and therefore $\tilde P_N$ has periodic Reeb flow.
Also, the conditions P1, P2 and P4 are satisfied for $\tilde P_N$. Furthermore, P5 holds since we are assuming that $\tilde W \times D^2$ is subcritical.

Hence there is a Morse--Bott spectral sequence converging to $SH^{S^1,+}_*(\tilde W\times D^2)$, see also the proof of Proposition~\ref{prop:mean_euler_S^1-orbibundle}.
Its $E^1$-page is given by
$$
E^1_{pq}=\bigoplus_{\substack{ N_T \text{ consists of contractible orbits} \\ \mu(N_T)-\frac{1}{2}\dim (N_T/S^1)=p}} H^{S^1}_{q}(N_T;\Q).
$$
For sufficiently large $N$, all Maslov indices of covers of principal orbits are negative by Lemma~\ref{lemma:mean_index_BW_orbi}.
It follows that all Maslov indices are bounded from above, and therefore the entries of this spectral sequence have also an upper bound on their degree.
This contradicts that $SH^{S^1,+}_*(\tilde W\times D^2)$ has generators in arbitrarily large, positive degree.
We conclude that $\tau^N$ is not symplectically isotopic to the identity relative to the boundary for large values of $N$.

To obtain the claim for small $N$, we just observe that if $\tau^{N_0}$ is symplectically isotopic to the identity relative to the boundary, then so is $\tau^{N_0m}$ for any positive integer $m$.
\end{proof}

\begin{lemma}
\label{lemma:covering}
Let $(P^{2n-1},\alpha)=(P_N,\vartheta_N)$ be a cooriented contact manifold as constructed in setup~{\bf S} such that conditions P1, P2 and P3 hold, and suppose that $\pi:(\tilde P,\tilde \alpha) \to (P,\alpha)$ is a connected $m$-fold cover such that conditions P1, P2, P3, P4 and P5 hold for $(\tilde P,\tilde \alpha)$. Denote the exact filling of $\tilde P$ by $\tilde W$. Then
$$
\chi_m(SH^{S^1,+}_*(\tilde W) \, )=\chi_m(\tilde P,\tilde \alpha)=
(-1)^{n+1} \frac{\left( \frac{N}{\ell}-\gcd(N,m) \right) \chi(H)+\gcd(N,m)\chi(M) }{|\mu_P|}
$$
with $\ell=\gcd(N,k)$.
\end{lemma}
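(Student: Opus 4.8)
The plan is to reduce to the combinatorial formula \eqref{eq:definition_mean_euler_contact}. The first equality is immediate: $(\tilde P,\tilde\alpha)$ satisfies conditions P1--P5 with exact filling $\tilde W$, so Proposition~\ref{prop:mean_euler_S^1-orbibundle} gives $\chi_m(SH^{S^1,+}_*(\tilde W))=\chi_m(\tilde P,\tilde\alpha)$, the right-hand side being the number defined by \eqref{eq:definition_mean_euler_contact}. It thus remains to read off, for $\tilde P$, the two orbit types together with their periods $\tilde T_1<\tilde T_2$, the equivariant Euler characteristics $\chi^{S^1}$ of the two orbit spaces, the value of $\pi_1(\tilde P)$, and the Maslov index $\mu_{\tilde P}$. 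Throughout, write $g=\gcd(N,m)$.

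First I would describe how $\pi$ acts on periodic Reeb orbits. By the proof of Lemma~\ref{lemma:mean_index_BW_orbi} we have $\pi_1(P_N)\cong\Z_k$, generated by a simple exceptional orbit $e$, and the binding neighbourhood of $P_N$ is modelled on $P_{H,k}\times{\mathring D^2}$ (Section~\ref{sec:Boothby_Wang_orbifold}). Connectedness of $\tilde P$ forces $m\mid k$ and $\pi_1(\tilde P)\cong m\Z_k\cong\Z_{k/m}$. A principal orbit $\gamma$ of $P_N$, obtained by pushing slightly off the binding, is freely homotopic to the $N$-fold iterate $e^N$ and hence represents $N\in\Z_k$. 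Therefore $e^j$ lifts to a loop in $\tilde P$ exactly when $m\mid j$, and $\gamma^j$ lifts exactly when $m\mid jN$; the minimal iterates that lift are $e^m$ and $\gamma^{m/g}$. Since $e$ has period $2\pi/N$ and $\gamma$ has period $2\pi$, the exceptional orbits of $\tilde P$ have period $\tilde T_1=2\pi m/N$, the principal orbits of $\tilde P$ (the components of preimages of principal orbits of $P_N$) have period $\tilde T_2=2\pi m/g$, and so $\tilde T_2/\tilde T_1=N/g$.

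Next I would compute the two equivariant Euler characteristics. The exceptional locus $N_{T_1}$ of $P_N$ is $P_{H,k}$ with its free (principal) Reeb action, and its connected $m$-fold cover $\tilde N_{\tilde T_1}$ is the principal circle bundle $P_{H,k/m}\to H$, again with free Reeb action; hence $\chi^{S^1}(\tilde N_{\tilde T_1})=\chi(\tilde N_{\tilde T_1}/S^1)=\chi(H)$. Every Reeb orbit of $\tilde P$ has period dividing $\tilde T_2$, so $\tilde N_{\tilde T_2}=\tilde P$; the Reeb action on $\tilde P$ being locally free, $H^*_{S^1}(\tilde P;\Q)\cong H^*(\tilde P/S^1;\Q)$ and thus $\chi^{S^1}(\tilde N_{\tilde T_2})=\chi(|\tilde P/S^1|)$. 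Now $\pi$ descends to $q\colon\tilde P/S^1\to P_N/S^1=M_N$, with $\chi(|M_N|)=\chi(M)$ by the construction; the map $q$ is a genuine $g$-sheeted covering over $M_N\setminus H$ (there the Reeb action is free and locally trivial) and a homeomorphism over $H$ (since $q^{-1}(H)=\tilde N_{\tilde T_1}/S^1=H$). A Mayer--Vietoris computation over $M_N=(M_N\setminus\nu(H))\cup\nu(H)$, using $\chi(\partial\nu(H))=0$ and $\chi(\nu(H))=\chi(H)$, then yields $\chi(|\tilde P/S^1|)=g(\chi(M)-\chi(H))+\chi(H)$. Finally, since $\pi_*$ is injective on $\pi_1$ and $d\pi$ identifies linearized return maps, the smallest contractible iterate of a principal orbit of $\tilde P$ is carried by $\pi$ onto the smallest contractible iterate of a principal orbit of $P_N$; hence $\mu_{\tilde P}=\mu_P$.

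To conclude, in \eqref{eq:definition_mean_euler_contact} applied to $\tilde P$ the role of $N$ is played by $N/g$, that of $k$ by $k/m$, and that of $\ell$ by $\tilde\ell:=\gcd(N/g,\,k/m)$; substituting the data collected above,
\[
\chi_m(\tilde P,\tilde\alpha)=(-1)^{n+1}\,\frac{\bigl((N/g)/\tilde\ell-1\bigr)\chi(H)+g\bigl(\chi(M)-\chi(H)\bigr)+\chi(H)}{|\mu_P|},
\]
whose numerator equals $\bigl((N/g)/\tilde\ell-g\bigr)\chi(H)+g\,\chi(M)$. It only remains to verify the elementary identity $(N/g)/\tilde\ell=N/\gcd(N,k)=N/\ell$, equivalently $g\cdot\gcd(N/g,\,k/m)=\gcd(N,k)$, which one checks prime by prime using $m\mid k$; this gives exactly the asserted formula. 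I expect the main obstacle to be the third paragraph: nailing down the branched-covering structure of $q\colon|\tilde P/S^1|\to M_N$ — the sheet number $g$ and the absence of ramification over $H$ — and justifying $\chi^{S^1}(\tilde P)=\chi(|\tilde P/S^1|)$ for the locally free Reeb action.
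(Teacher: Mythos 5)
Your proposal is correct and follows essentially the same route as the paper's proof: reduce to Proposition~\ref{prop:mean_euler_S^1-orbibundle}, determine the periods $\tilde T_1,\tilde T_2$ and $\tilde\ell$ from the covering, compute $\chi^{S^1}$ of the two orbit spaces via the locally free quotients (getting $\chi(H)$ and $(1-\gcd(N,m))\chi(H)+\gcd(N,m)\chi(M)$), observe $\mu_{\tilde P}=\mu_P$, and substitute into \eqref{eq:definition_mean_euler_contact}. The only cosmetic differences are that you count $\chi(\tilde M_N)$ by Mayer--Vietoris for the $\gcd(N,m)$-sheeted cover away from $H$ where the paper lifts a simplicial decomposition, and that you make explicit (via $\pi_1(P_N)\cong\Z_k$) why $m\mid k$, a point the paper uses implicitly.
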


\begin{proof}
By Proposition~\ref{prop:mean_euler_S^1-orbibundle} it suffices to show $\chi_m(\tilde P,\tilde \alpha)$ satisfies the given formula. 
Write $p:P\to M_N$ and $\tilde p:\tilde P\to \tilde M_N$ for the projections to the quotient spaces.
Denote the simple exceptional orbits in $P$ by $N_{T_1}$, and the principal orbits by $N_{T_2}$.
Similarly, write $N_{\tilde T_1}$ and $N_{\tilde T_2}$ for the exceptional and principal orbits in $\tilde P$.
We first relate the periods. 
As $\tilde P$ is an $m$-fold cover, it follows that $\tilde T_1=m T_1$.
For $P$, we have $T_2=N T_1$.
In $\tilde P$, we have $\tilde T_2=\tilde N \tilde T_1$ with $\tilde N=\frac{N}{\gcd(N,m)}$
Recall that $\ell$ is defined by $\ell=\gcd(N,k)$.
The corresponding notion in $\tilde P$ is $\tilde \ell=\gcd(\tilde N,\frac{k}{m})$.
Hence we have
$$
\tilde \ell=\gcd(\frac{N}{\gcd(N,m)},\frac{k}{m})=\frac{\gcd(N,k)}{\gcd(N,m)}.
$$

To compute the equivariant Euler characteristics, we use \cite[Lemma~5.3]{Ginzburg:group}, which asserts that for an $S^1$-manifold $N$ with locally free (i.e.~only finite isotropy groups) action, one has $H_*^{S^1}(N;\Q)\cong H_*(N/S^1;\Q)$.
The exceptional orbits in $P$ and in $\tilde P$ lie both in an $S^1$-bundle over $H$.
Hence $H^{S^1}_*(N_{T_1};\Q)\cong H_*(H;\Q) \cong H^{S^1}_*(N_{\tilde T_1};\Q)$ and in particular $\chi^{S^1}(N_{\tilde T_1})=\chi(H)$.
For $\chi^{S^1}(N_{\tilde T_2})$ we decompose $M_N=\nu(H)\cup C$ and $\tilde M_N=\tilde \nu(H)\cup \tilde C$, where $\nu(H)$ is a neighborhood of $H$ in $M_N$, $C$ is the complement of $H$ in $M_N$, $\tilde \nu(H)$ is a neighborhood of $H$ in $\tilde M_N$, and $\tilde C$ is the complement of $H$ in $\tilde M_N$.
Away from the exceptional orbits, we have free circle actions: $p:P-N_{T_1}\to C$, and $\tilde p:\tilde P-N_{\tilde T_1}\to \tilde C$ are circle bundles.

We see that a point in $C$ lifts to a single orbit $\gamma$ in $P$. The preimage in $\tilde P$ under $\pi$ consists of $\gcd(N,m)$ distinct orbits, which project down to $\gcd(N,m)$ distinct points in $\tilde C$.
We apply this observation to a simplicial decomposition of $C$. It follows that each simplex in $C$ gives rise to $\gcd(N,m)$ distinct simplices in $\tilde C$.
By putting together all simplices obtained this way we obtain a simplicial decomposition for $\tilde C$. It follows that $\chi(\tilde C)=\gcd(N,m) \chi(C)$.
We conclude that 
$$
\chi^{S^1}(N_{\tilde T_2})=\chi(\tilde M_N)=\chi(H)+\gcd(N,m)\chi(C)=(1-\gcd(N,m) \, )\chi(H)+\gcd(N,m)\chi(M).
$$

Finally observe that $\mu_{\tilde P}=\mu_{P}$.
Indeed, the smallest contractible cover of a principal orbit in $P$ lifts to a contractible loop in $\tilde P$: by lifting the trivialization of the contact structure as well, we see that the Maslov indices must coincide.

Put the above into Formula~\eqref{eq:definition_mean_euler_contact}. We find
\[
\begin{split}
\chi_m(\tilde P,\tilde \alpha) &=
(-1)^{n+1} \frac{\left( \frac{\tilde N}{\tilde \ell}-1 \right) \chi^{S^1}(N_{\tilde T_1})+\chi^{S^1}(N_{\tilde T_2}) }{|\mu_{\tilde P}|}\\
&=
(-1)^{n+1} \frac{\left( \frac{N}{\ell}-\gcd(N,m) \right) \chi(H)+\gcd(N,m)\chi(M) }{|\mu_P|}.
\end{split}
\]
\end{proof}

\begin{remark}
Lemma~\ref{lemma:mean_index_BW_orbi} does not directly apply to compute the Maslov index of a $k$-fold cover of a principal orbit in $(\tilde P,\tilde \alpha)$ since
$$
\tilde \alpha=\frac{m}{N}(1-r^2)\theta+r^2d\phi,
$$
and $\frac{m}{N}$ is not necessarily an integer.
\end{remark}

A special case worth mentioning is the following.
\begin{proposition}
\label{prop:mean_euler_boothby_wang_orbifold}
Again, suppose that $(P_N^{2n-1},\theta_N)$ is as in setup {\bf S} with $k$ odd. Suppose that $(P_N,\theta_N)$ has an exact filling $W'$ such that
\begin{itemize}
\item $c_1(W')$ is torsion.
\item $i:P_N\to W'$ induces an injection on $\pi_1$.
\end{itemize}
Suppose furthermore that $N(c-k)+k\neq 0$.
Then the mean Euler characteristic of $SH^{S^1,+}(W')$ in the class of contractible orbits is
\begin{equation}
\chi_m(SH_*^{S^1,+}(W'))=(-1)^{n+1}\frac{(N-\ell )\chi(H)+\ell \chi(M)}{2| N(c-k)+k|}.
\end{equation}
with $\ell=\gcd(N,k)$.
Furthermore for $k=1$, we can rewrite this as 
\begin{equation}
\chi_m(SH_*^{S^1,+}(W'))=(-1)^{n+1}\frac{\chi(\vert \w M_N \vert)}{2 N\vert \langle c_1^{orb}(M_N) , [B_N] \rangle \vert},
\end{equation}
where 
\begin{itemize}
\item $N$ can be identified with the total number of sectors, and
\item the homology class $[B_N]$ is represented by a $2$-sphere $B_N$ lying in $\nu_{M_N}(H)$, such that
$$
\langle j^*[\omega],\pi_*([B_N])\rangle=1,
$$
where $j$ denotes the inclusion $H\subset M_N$, and $\pi: \nu_{M_N}(H)\to H$ the projection.
\end{itemize}
\end{proposition}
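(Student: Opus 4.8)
The plan is to reduce the statement to Proposition~\ref{prop:mean_euler_S^1-orbibundle} applied to the Boothby--Wang orbibundle $(P_N,\theta_N)$, and then to evaluate the two equivariant Euler characteristics and the index $\mu_P$ that occur in Formula~\eqref{eq:definition_mean_euler_contact}.

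\emph{Step 1: verifying P1--P5.} I would first check that $(P_N,\theta_N)$ together with the exact filling $W'$ satisfies conditions P1--P5. Conditions P1 and P2 hold for any orbibundle produced by Setup~{\bf S} (see the remark following it): the exceptional orbits $N_{T_1}$ form the Boothby--Wang bundle $P$ over $H$ sitting inside $P_N$ as the binding core, the principal orbits give $N_{T_2}=P_N$, and $\pi_1(P_N)\cong\Z_k$ is generated by a simple exceptional orbit. A principal orbit is freely homotopic to $N$ times a simple exceptional orbit (compare Remark~\ref{remark:multiple_cover_contractible}), so its smallest contractible multiple is the $(k/\ell)$-fold cover, where $\ell=\gcd(N,k)$; since the Reeb flow satisfies $Fl^{R}_{2\pi}=\id$, the $k$-fold cover is the $\ell$-th iterate of this one, and combining Lemma~\ref{lemma:mean_index_BW_orbi} with the catenation property of the Robbin--Salamon index (Remark~\ref{rem:CZ-index}) gives $\mu_P=\frac{2(N(c-k)+k)}{\ell}$, a nonzero even integer because $\ell\mid N(c-k)+k$ and $N(c-k)+k\neq 0$ by hypothesis; this is P3. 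For P4 we use that $k$ is odd: then $H^1(P;\Z_2)=H^1(P_N;\Z_2)=0$ since both spaces have first homology $\Z_k$, so Lemma~\ref{lemma:H^1_trivial} gives $H^1(N_T\times_{S^1}ES^1;\Z_2)=0$ for every $N_T$, and Remark~\ref{rem:good_orbits} shows there are no bad orbits. Finally P5 is part of the hypotheses.

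\emph{Step 2: the first formula.} By Proposition~\ref{prop:mean_euler_S^1-orbibundle} we obtain $\chi_m(SH^{S^1,+}_*(W'))=\chi_m(P_N,\theta_N)$, the right-hand side being given by Formula~\eqref{eq:definition_mean_euler_contact}. The $S^1$-action is locally free on $N_{T_1}$ and on $N_{T_2}$, with quotients $N_{T_1}/S^1=H$ and $N_{T_2}/S^1=M_N$, so by \cite[Lemma~5.3]{Ginzburg:group} we get $\chi^{S^1}(N_{T_1})=\chi(H)$ and $\chi^{S^1}(N_{T_2})=\chi(M_N)$. Decomposing $M_N$ into a tubular neighborhood $\nu_{M_N}(H)\simeq H$ and its complement $C\cong W$, glued along an $S^1$-bundle over $H$ of vanishing Euler characteristic, a Mayer--Vietoris argument exactly as in the proof of Lemma~\ref{lemma:covering} gives $\chi(M_N)=\chi(H)+\chi(W)=\chi(M)$. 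Substituting $\chi^{S^1}(N_{T_1})=\chi(H)$, $\chi^{S^1}(N_{T_2})=\chi(M)$ and $|\mu_P|=\frac{2|N(c-k)+k|}{\ell}$ into \eqref{eq:definition_mean_euler_contact} and clearing the common factor $\ell$ gives $(-1)^{n+1}\frac{(N-\ell)\chi(H)+\ell\chi(M)}{2|N(c-k)+k|}$, as claimed.

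\emph{Step 3: the reformulation for $k=1$.} When $k=1$ we have $\ell=1$, so the formula reads $(-1)^{n+1}\frac{(N-1)\chi(H)+\chi(M)}{2|N(c-1)+1|}$, and it remains to rewrite numerator and denominator. The orbifold $M_N$ has underlying space $|M|$ and a single singular stratum $H$ with isotropy $\Z_N$ acting by rotation on the normal disk; hence its inertia orbifold $\w M_N$ has one untwisted sector $|M_N|$ and $N-1$ twisted sectors, one for each nontrivial element of $\Z_N$, each with underlying space the fixed locus $H$. Thus the total number of sectors is $N$ and $\chi(\vert\w M_N\vert)=\chi(|M_N|)+(N-1)\chi(H)=\chi(M)+(N-1)\chi(H)$, which is the numerator. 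For the denominator, restrict $c_1^{orb}(M_N)$ to $\nu_{M_N}(H)$: from the orbifold splitting $TM_N|_H\cong\pi^*TH\oplus\nu_{M_N}(H)$, the identity $c_1(H)=(c-1)[\omega|_H]$ obtained in the proof of Lemma~\ref{lemma:mean_index_BW_orbi}, and the fact that the orbi-normal bundle $\nu_{M_N}(H)$ is the $N$-th root of the honest line bundle $\nu_M(H)$ (whose first Chern class is $[\omega|_H]$ when $k=1$), so that $c_1^{orb}(\nu_{M_N}(H))=\frac1N[\omega|_H]$, one gets $c_1^{orb}(M_N)|_{\nu_{M_N}(H)}=\pi^*\bigl((c-1+\frac1N)[\omega|_H]\bigr)$. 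Pairing with $[B_N]$ and using $\langle j^*[\omega],\pi_*[B_N]\rangle=1$ yields $\langle c_1^{orb}(M_N),[B_N]\rangle=c-1+\frac1N=\frac{N(c-1)+1}{N}$, so $2N\vert\langle c_1^{orb}(M_N),[B_N]\rangle\vert=2\vert N(c-1)+1\vert$ and the second formula follows. As a consistency check, the capping disk of a simple principal orbit projects to the orbifold $2$-cycle $N[B_N]$, so $2\langle c_1^{orb}(M_N),N[B_N]\rangle=2(N(c-1)+1)=\mu_P$, matching Step~1.

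\emph{Main obstacle.} Steps~1 and~2 are routine once the bookkeeping is set up; the delicate point is Step~3 --- giving a careful definition of $[B_N]$ as a rational orbifold $2$-cycle and checking that the factor $\frac1N$ in $c_1^{orb}(\nu_{M_N}(H))$ is precisely compensated by the normalization $\langle j^*[\omega],\pi_*[B_N]\rangle=1$, so that everything stays consistent with the index computation of Step~1. One must also ensure that the "no bad orbits" part of P4 really does follow from $k$ being odd via Remark~\ref{rem:good_orbits}.
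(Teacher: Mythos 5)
Your proposal is correct and follows essentially the same route as the paper's proof: verify P1--P5, apply Proposition~\ref{prop:mean_euler_S^1-orbibundle} (the paper invokes it via Lemma~\ref{lemma:covering} with $m=1$) together with $|\mu_P|=\tfrac{2|N(c-k)+k|}{\ell}$ from Lemma~\ref{lemma:mean_index_BW_orbi}, and then reinterpret numerator and denominator for $k=1$. Your Step~3 substitutes a direct count of twisted sectors for the paper's appeal to Adem--Leida--Ruan, and an $N$-th-root description of the orbi-normal bundle for the paper's explicit Chern--Weil computation, but these are only cosmetic variants yielding the same quantities $\chi(\vert\w M_N\vert)=(N-1)\chi(H)+\chi(M)$ and $\langle c_1^{orb}(M_N),[B_N]\rangle=c-1+\tfrac1N$.
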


\begin{remark}
The simplest case of such an exact filling is a Stein filling. Our dimension assumptions show that $i^*:H^2(W')\to H^2(P_N)$ is injective. Hence $c_1(W')$ is torsion. Furthermore, our dimension assumptions imply that the inclusion $i:P_N\to W'$ induces an isomorphism on $\pi_1$.
\end{remark}

\begin{proof}
Consider the smallest contractible cover of a principal orbit: this is a $\frac{k}{\ell}$-times cover of a principal orbit.
By Lemma~\ref{lemma:mean_index_BW_orbi} we find $\mu_P=\frac{2( N(c-k)+k)}{\ell}$. Hence P3 holds.
The given conditions imply that P1, P2, P4 (use Lemma~\ref{lemma:H^1_trivial}) and P5 hold as well, so with $m=1$ we apply Lemma~\ref{lemma:covering} and obtain.
$$
\chi_m( SH^{S^1,+}_*(W')\, ) =(-1)^{n+1}
\frac{(\frac{N}{\ell}-1)\chi(H)+\chi(M)}{|\mu_P|}
$$
Combine to obtain the first claim.

We proceed to give some details for the last part.
By \cite[Corollary~3.17]{AdemLeidaRuan:book}, we have that $\chi_{orb}(M_N)=\chi(\vert \w M_N \vert)$, where $\w M_N$ is the inertia orbifold associated with $M_N$. Together with \cite[Theorem~3.17]{AdemLeidaRuan:book}, we find $\chi(\vert \w M_N\vert)=(N-1)\chi(H)+\chi(M)$.

For the Chern number, we first consider $M_1=M$. We construct a $2$-sphere $B_1$ with $[B_1] \cdot [H]=1$.
Define
$$
B_1=D_1 \cup_\partial D_2.
$$
Here $D_2$ is a disk of the form
$$
D_2=\{ [p,z]_1 \in P\times_{S^1,1} D^2_\epsilon ~|~\vert z\vert<\epsilon \}
,
$$
where $[p,z]_1$ denotes the equivalence class of the relation $(p,z)\sim_{S^1,1} (p\cdot g,g z)$.
The boundary of $D_2$ is a circle lying in $\partial \nu_M(H)\cong P$, which is simply-connected. Hence we find a disk $D_1\subset P$ bounding the same circle. 
Denote the inclusion of $B_1$ into $M_1$ by $i_1$.

Using a metric, we can split the tangent bundle $TM$ along $H$ as $TM|_{H}\cong TH\oplus \nu_{M_1}^b(H)$, where $\nu_{M_1}^b(H)$ is the normal bundle of $H$ in $M$.
Since $B_1\subset \nu_{M_1}^b(H)$, we have $i_1^*TM\cong i_1^*TH \oplus i_1^* \nu_{M_1}^b(H)$.
We are interested in $c_1(TM)=c_1(\Lambda^{top} TM)$, and we shall compute this using Chern-Weil theory.

By the above, we have $i_1^*\Lambda^{top} TM\cong i_1^*\Lambda^{top}TH\otimes i_1^* \nu_{M_1}^b(H)$.
We construct connections on $L_{1,N=1}:=i_1^*\nu_{M_1}^b(H)$, and on $L_2:=i_1^*\Lambda^{top}TH$.
\begin{itemize}
\item We trivialize the normal bundle $\nu_{M_1}^b(H)$ on the collar neighborhood $\nu_\partial(D_2)$ of the boundary of $D_2$ by $([p,z],\lambda) \mapsto ([p,z],\lambda z)$.
Here we use that $D_2$ is a disk that is normal to $H$.
We extend this trivialization over $D_1$.

Now choose a connection $\nabla_{L_{1,1}}$ that equals the trivial connection $d$ on $D_1$ and on the collar neighborhood $\nu_\partial(D_2)$.
A standard formula for the change of frame $v\mapsto \frac{1}{z}v$ gives the connection form on $D_2$: this is $z d\frac{1}{z}=-\frac{dz}{z}$.
The resulting connection is invariant under the $\Z_N$-action by $N$-th roots of unity in the disk $D_2$ near $H$.

\item For $L_2$ choose a connection $\nabla_{L_2}$ that equals the trivial connection $d$ on $D_2$.
\end{itemize}
Using the Chern-Weil construction, we have $\int_{D_2} c_1(\nabla_{L_{1,1}})=1$ since $c_1(\nu(H)\, )=[\omega]$. Alternatively, we can integrate directly.
Furthermore, $\int_{D_1} c_1(\nabla_{L_2})=c-1$ as $c_1(i^*TH)+c_1(i^*\nu_{M}^b(H)\,)=ci^*[\omega]$.
In trivializing charts we can define a connection for $L_{1,1}\otimes L_2 \cong i_1^* \Lambda^{top}TM$ by putting $\nabla_{L_{1,1}\otimes L_2}=d+\theta_{L_{1,1}}+\theta_{L_2}$, where $\theta_{L_{1,1}}$ and $\theta_{L_2}$ are the connection forms with respect to a frame for $L_{1,1}$ and $L_2$, respectively. We shall use these connections to construct a connection for the general case.

For the case $M_N$ with $N>1$, the sphere $B_N$ is replaced by the orbisphere
$$
B_N=D_1 \cup_\partial D_2^N,
$$
where $D_2^N$ is the orbidisk
$$
D_2^N=\{ [p,z]_N \in P\times_{S^1,N} D^2_\epsilon ~|~\vert z\vert<\epsilon \}
,
$$
and $[p,z]_N$ denotes the equivalence class of the relation $(p,z)\sim_{S^1,N} (p\cdot g^N,g z)$.
Note that $p$ is an orbifold point with isotropy group $\Z_N$ in $B_N$, and that $[B_N]$ satisfies the homological condition given in the Proposition.

Let $i_N$ denote the inclusion $B_N$ into $M_N$, and consider the orbibundle $i_N^*\Lambda^{top}TM_N$.
On the disk $D_1$ this is a vector bundle and the disk $D_2$ serves as a uniformizing chart for $D_2^N$, so we apply the construction of a connection to these (uniformizing) disks and find
$$
\int_{B_N}i^*_N c_1^{orb}(\Lambda^{top}TM_N)=\int_{D_1} c_1(\nabla_{L_2}) +\frac{1}{N} \int_{D_2} c_1(\nabla_{L_{1,1}})=c-1+\frac{1}{N}.
$$
\end{proof}

\begin{theorem}
\label{thm:distinguishing_powers}
Let $(M^{2n-2},\omega)$ be a simply connected symplectic manifold of dimension at least $6$ such that $[\omega]\in H^2(M;\Z)$ is a primitive element.
Suppose that $c_1(M)=c[\omega]$, and let $H$ be an adapted Donaldson hypersurface Poincar\'e dual to $k[\omega]$.
Let $\tau$ denote a right-handed fibered Dehn twist along the boundary of $M-\nu(H)$.
If $\tau^N$ is symplectically isotopic to the identity relative to the boundary, then one of the following conditions must hold,
\begin{itemize}
\item $c\geq k$, $k$ does not divide $N$, and $\chi(H)=\chi(M)=0$.
\item $c=k$, $k$ divides $N$, and $\chi(H)=0$.
\item $c> k$, $k$ divides $N$, and $\left((c-k)k+1 \right) \chi(H)=(c-k)k\chi(M)$.
\end{itemize} 
\end{theorem}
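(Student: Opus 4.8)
The plan is to assume $\tau^N$ is symplectically isotopic to the identity relative to $\partial W$ and to squeeze out the three alternatives by computing one and the same mean Euler characteristic in two different ways. First note that if $\tau^N\simeq\id$ rel $\partial W$, then $\tau^{Nj}=(\tau^N)^j\simeq\id$ rel $\partial W$ for every positive integer $j$, so by Theorem~\ref{thm:subcritical_open_book} each $P_{Nj}:=\open(W,\tau^{Nj})$ is contactomorphic to $\open(W,\id)=\partial(W\times D^2)$, i.e.\ subcritically Stein fillable by $W\times D^2$. By Lemma~\ref{lemma:negative_c_BW_orbi} this already forces $c\geq k$ — this is the ``$c\geq k$'' that sits inside all three bullets — and it guarantees $Nj(c-k)+k>0$ for all $j$, hence (Lemma~\ref{lemma:mean_index_BW_orbi} together with the computation of $\mu_P$ in Proposition~\ref{prop:mean_euler_boothby_wang_orbifold}) that the index $\mu_{P_{Nj}}=\frac{2(Nj(c-k)+k)}{\ell_j}$ is nonzero, where $\ell_j:=\gcd(Nj,k)$ and the smallest contractible cover of a principal orbit is its $(k/\ell_j)$-fold cover.

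Next I would pass to universal covers, exactly as in the proof of Lemma~\ref{lemma:negative_c_BW_orbi}. Since $\pi_1(P_{Nj})\cong\Z_k$ for every $j$, the universal cover $\tilde P_{Nj}$ is a connected $k$-fold cover, it is simply connected, and it is filled by $\tilde W\times D^2$, the $k$-fold cover of $W\times D^2$; this filling is again subcritical Stein and simply connected, and the dimension hypothesis $\dim M\geq 6$ makes $H^2(\tilde W\times D^2)\to H^2(\tilde P_{Nj})$ injective, so $c_1(\tilde W\times D^2)$ is torsion. Conditions P1--P5 then hold for $\tilde P_{Nj}$: P3 because $\mu_{P_{Nj}}\neq0$, P4 by Lemma~\ref{lemma:H^1_trivial} together with $\tilde P_{Nj}$ and its binding being simply connected, and there are no bad orbits by Remark~\ref{rem:good_orbits}. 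Now I compute $\chi_m\bigl(SH^{S^1,+}_*(\tilde W\times D^2)\bigr)$ in two ways. On one hand, Proposition~\ref{proposition:mean_euler_subcritical} gives $\frac{(-1)^{n+1}\chi(\tilde W\times D^2)}{2}=\frac{(-1)^{n+1}k(\chi(M)-\chi(H))}{2}$, using that $\tilde W\to W$ is a $k$-fold cover, $\chi(\tilde W)=k\chi(W)$, and $\chi(W)=\chi(M)-\chi(H)$. On the other hand, Lemma~\ref{lemma:covering} applied with its ``$N$'' equal to $Nj$ and cover degree $m=k$ (so its ``$\ell$'' and its ``$\gcd(N,m)$'' are both $\ell_j$) gives
$$\chi_m\bigl(SH^{S^1,+}_*(\tilde W\times D^2)\bigr)=(-1)^{n+1}\frac{(Nj-\ell_j^2)\chi(H)+\ell_j^2\chi(M)}{2\,(Nj(c-k)+k)}.$$
Equating the two expressions yields, for every $j\geq1$,
$$k\,(\chi(M)-\chi(H))\,\bigl(Nj(c-k)+k\bigr)=(Nj-\ell_j^2)\chi(H)+\ell_j^2\chi(M).\qquad(\star_j)$$

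Finally I would extract the three cases from $(\star_j)$. If $k\mid N$, then $\ell_j=k$ for all $j$, and $(\star_1)$, after cancellation and division by $N$, reduces to $\bigl((c-k)k+1\bigr)\chi(H)=(c-k)k\,\chi(M)$; for $c=k$ this says $\chi(H)=0$ (second bullet), and for $c>k$ it is precisely the third bullet. If $k\nmid N$, write $\ell=\gcd(N,k)<k$ and $k'=k/\ell>1$; for the infinitely many $j$ with $\gcd(j,k')=1$ one has $\ell_j=\ell$, so $(\star_j)$ becomes an identity between two degree-one polynomials in $j$ holding for infinitely many $j$, hence coefficientwise. Comparing the coefficients of $j$ gives $k(c-k)(\chi(M)-\chi(H))=\chi(H)$, and comparing constant terms gives $(k^2-\ell^2)(\chi(M)-\chi(H))=0$; since $\ell<k$ the latter forces $\chi(M)=\chi(H)$, and then the former forces $\chi(H)=0$, so $\chi(H)=\chi(M)=0$ (first bullet). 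The main obstacle — beyond verifying P1--P5 for the covers — is the bookkeeping: one must track which covers of the exceptional and principal orbits are contractible in $P_{Nj}$ and in $\tilde P_{Nj}$ so that $\mu_P$, $\ell_j$, and Lemma~\ref{lemma:covering} are applied with the correct multiplicities, and one must notice that the subcritical computation is legitimate only after passing to the universal cover, where $c_1$ becomes torsion (the filling $W\times D^2$ itself need not have torsion first Chern class when $k\geq2$, which is exactly why the naive comparison using Proposition~\ref{prop:mean_euler_boothby_wang_orbifold} directly on $W\times D^2$ would be invalid).
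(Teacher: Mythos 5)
Your proof is correct and follows the same strategy as the paper's: pass to the $k$-fold universal cover $\tilde P_{Nj}\to P_{Nj}$ filled by $\tilde W\times D^2$, compute $\chi_m$ of that subcritical filling once via Proposition~\ref{proposition:mean_euler_subcritical} and once via Lemma~\ref{lemma:covering} (using the $\mu_P$ formula from Lemma~\ref{lemma:mean_index_BW_orbi}), and then extract the three bullets from the resulting identity $(\star_j)$ by varying $j$ --- exactly the paper's argument, with the same case split on whether $k$ divides $N$ and the same use of the $j$-dependence to force $\chi(H)=\chi(M)=0$ when $k\nmid N$. Your extra remarks (why one must pass to the cover to ensure $c_1$ is torsion, which $j$ give $\ell_j=\ell$) make explicit a couple of points the paper leaves implicit, but the route is identical.
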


\begin{remark}
This means in many cases that all positive powers of fibered Dehn twists along the boundary of $M-\nu(H)$ are distinct.
Indeed, note that if $\tau^M$ is symplectically isotopic to $\tau^N$ relative to the boundary with $M>N$, then $\tau^{M-N}$ is symplectically isotopic to the identity relative to the boundary.
\end{remark}

\begin{proof}
By Lemma~\ref{lemma:negative_c_BW_orbi} a fibered Dehn twist cannot be symplectically isotopic to the identity relative to the boundary if $c<k$.

For $c\geq k$ we investigate the mean Euler characteristic. Take $N\in \N$ such that $\tau^{N}$ is symplectically isotopic to the identity relative to the boundary. Then for $m\in \N$, $\tau^{Nm}$ is also symplectically isotopic to the identity relative to the boundary.
Put $W:=M-\nu(H)$.
Consider $P_{Nm}=\OB(W,\tau^{Nm})$.
Then $P_{Nm}$ is subcritically fillable by $W\times D^2$.
The universal cover of $P_{Nm}$, denoted by $\widetilde{P_{Nm}}$ is then subcritically fillable by $\widetilde W \times D^2$.
Then Proposition~\ref{proposition:mean_euler_subcritical} shows that
$$
\chi_m(\widetilde W \times D^2)=(-1)^{n+1}\frac{\chi( \widetilde W)}{2}=(-1)^{n+1}\frac{ k \chi(W)}{2}=(-1)^{n+1}\frac{ k \left( \chi(M)-\chi(H) \right)}{2}.
$$
On the other hand, Remark~\ref{rem:good_orbits} and the fact that $\tilde W \times D^2$ is a subcritical Stein filling show that Lemma~\ref{lemma:covering} applies. We obtain
$$
\chi_m(\widetilde W \times D^2)=(-1)^{n+1}
\frac{(\frac{Nm}{\gcd(Nm,k)}-\gcd(Nm,k)\,)\chi(H)+\gcd(Nm,k)\chi(M)}{|\mu_P|}
$$
with $\mu_P=2(Nm(c-k)+k)/\gcd(Nm,k)$.
Comparing the two formulas for the mean Euler characteristic yields the following equation,
$$
(Nm-\gcd(Nm,k)^2\,)\chi(H)+\gcd(Nm,k)^2\chi(M)=(\, (c-k)Nm+k)k\chi(M)-(\, (c-k)Nm+k)k\chi(H),
$$
which we rewrite into
$$
(\,(k(c-k)+1)Nm+k^2-\gcd(Nm,k)^2\,)\chi(H)=
(\,k(c-k)Nm+k^2-\gcd(Nm,k)^2\,)\chi(M).
$$
We check when this equation can hold.
\begin{itemize}
\item if $k$ divides $N$, and $c=k$, then this equation reduces to $Nm\chi(H)=0$, so we conclude that $\chi(H)=0$.
\item if $k$ divides $N$, and $c>k$, then this equation reduces to $(k(c-k)+1)Nm\chi(H)=k(c-k)Nm\chi(M)$, so we conclude that $(k(c-k)+1)\chi(H)=k(c-k)\chi(M)$.
\item if $k$ does not divide $N$, then we define the functions
\[
\begin{split}
f(m)&:=(\,(k(c-k)+1)Nm+k^2-\gcd(Nm,k)^2\,)\chi(H)\\
g(m)&:=(\,k(c-k)Nm+k^2-\gcd(Nm,k)^2\,)\chi(M).
\end{split}
\]
The above equation tells us that $f(m)=g(m)$. This cannot hold for different values of $m$ with $\gcd(Nm,k)=\gcd(N,k)$ unless $\chi(H)=\chi(M)=0$. 
\end{itemize}

\end{proof}

We conclude this paper by giving some examples where  Theorem~\ref{thm:distinguishing_powers} applies.

\begin{example}
\label{example:CP-H_d}
    Let $M=\C P^{n-1}$ with $n\geq 4$ and $H=H_k$ a hypersurface of degree $k$ in $M$. One can check that
    \[
    \chi(M) = n, \quad \chi(H)=\frac{1}{k}\left( (1-k)^{n}-1 \right) +n, \quad c=n.
    \] 
Then a right-handed fibered Dehn twist $\tau$ along the boundary of $M-\nu(H)$ is not symplectically isotopic to the identity relative to the boundary unless $k=1$. Note that $\C P^n$ does not contain Lagrangian spheres. Therefore these fibered Dehn twists are not Dehn twists.
\end{example}

Note that Example~\ref{example:CP-H_d} satisfies the conditions of Theorem~\ref{thm:distinguishing_powers}.
\begin{example}
\label{example:degree_d_hypersurface}
Consider the degree $d$ hypersurface $H_d^{n-1}\subset \C P^{n}$ defined by
$$
H_d^{n-1}=\{ (z_0:\ldots:z_{n})\in \C P^{n}  ~|~\sum_j z_j^d=0 \}
.
$$
One can check that
$$
\chi(H_d^{n-1})=\frac{1}{d}\left( (1-d)^{n+1}-1 \right) +n+1
$$
and that $c$, as defined above, is equal to
$$
c=n+1-d.
$$
Take the hypersurface in $H_d^{n-1}$ given by
$$
H_d^{n-2}=\{ (z_0:\ldots:z_{n-1}:z_{n})\in H_d^{n-1} ~|~z_{n}=0 \}
.
$$
Observe that $H:=H_d^{n-2}$ is a hypersurface of degree $k=1$ in $M:=H_d^{n-1}$.
If $n>3$, then the manifolds $H_d^{n-2}$ and $H_d^{n-1}$ are simply-connected.
We apply Theorem~\ref{thm:distinguishing_powers} and check whether the last condition $\chi(H)c=\chi(M)(c-1)$ holds.
This leads to the equation
$$
\left( \frac{1}{d}\left( (1-d)^{n}-1 \right) +n \right) (n+1-d)=\left( \frac{1}{d}\left( (1-d)^{n+1}-1 \right) +n+1 \right) (n-d).
$$
Consider
$$
f_n(d):=d\cdot \left( 
\left( \frac{1}{d}\left( (1-d)^{n}-1 \right) +n \right) (n+1-d)-\left( \frac{1}{d}\left( (1-d)^{n+1}-1 \right) +n+1 \right) (n-d)
\right)
.
$$
We can simplify $f_n(d)$ to
$$
f_n(d)=(1-d)^n(1+nd-d^2)-(1-d^2).
$$
We claim that for integers $d$ with $2\leq d\leq n$, the number $f_n(d)$ does not vanish.
For $d\geq n+1$, we find $c=n+1-d\leq 0$, so we conclude:\\
\noindent{\bf Result:} If $d\geq 2$, then all powers of fibered Dehn twists along $H_d^{n-1}-\nu(H_d^{n-2})$ are pairwise distinct.

To verify our claim, we do a little computation.
First of all, note that $n> 3$ and
$$
f_n(2)=3+(-1)^n(2n-3).
$$
So $f_n(2)$ is positive if $n$ is even, and negative if $n$ is odd.
Now we check that the function $f_n$ is monotone on the interval $[2,n-2]$.
We compute
$$
f_n'(d)=(1-d)^{n-1}\left( d\left( (n+2)d-(n^2+n+2) \right) \right)+2d.
$$
If $n$ is even, then $f_n'(d)>0$ on the interval $[2,n-2]$.
If $n$ is odd, then $f_n'(d)<0$ on the interval $[2,n-2]$.

Finally, we check $f_n(n-1)$ and $f_n(n)$ separately:
$$
f_n(n-1)=\left( (2-n)^{n-1}-1 \right)(2-n)n \neq 0
$$
and
$$
f_n(n)=(1-n)^n-1+n^2\neq 0.
$$
\end{example}

\subsection{Fibered Dehn twists that are not smoothly isotopic to the identity}
The most interesting case is probably when a fibered Dehn twist is smoothly isotopic to the identity relative to the boundary, yet not symplectically.
This problem is unfortunately very hard to solve in general.
We give some examples to illustrate this. 
These examples also show that fibered Dehn twists are very often not even smoothly isotopic to the identity relative to the boundary.

\subsubsection{Dehn twists versus fibered Dehn twists}
Consider $W:=T^*_{\leq 1}S^n=\{ (q,p) \in T^*S^n ~|~\Vert p \Vert \leq 1\}$. Its boundary $P=S T^*S^n$ has a periodic Reeb flow, and this can be used to define Dehn twists and fibered Dehn twists. To define a Dehn twist, choose a smooth function $\tilde f:[0,1]\to \R$ such that $\tilde f$ is $2\pi$ near $0$ and $\tilde f$ is equal to $\pi$ near $1$. Define
\[
\begin{split}
\tau: ST^*S^n \times [0,1] & \longrightarrow ST^*S^n \times [0,1] \\
(x,t) & \longmapsto (-\id \circ Fl^R_{\tilde f(t)}(x),t)
\end{split}
\]
This defines a Dehn twist on a collar neighborhood of the boundary of $W$.
We extend the map to $-\id$ on the interior of $W$.
Note that the square of a Dehn twist, $\tau^2$, is symplectically isotopic to a fibered Dehn twist $\tau_f$.
From \cite[Theorem 1.21]{Avdek:Liouville} we have
\begin{proposition}
Fibered Dehn twists in $T^*_{\leq 1}S^n$ are not smoothly isotopic to the identity relative to the boundary unless $n=2,6$. On the other hand, fibered Dehn twists in $T^*_{\leq 1}S^2$ and $T^*_{\leq 1}S^6$ are smoothly isotopic to the identity relative to the boundary.
\end{proposition}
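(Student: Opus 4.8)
This is \cite[Theorem~1.21]{Avdek:Liouville}; we indicate the shape of the argument. As noted above $\tau_f$ is symplectically, hence smoothly, isotopic to $\tau^2$, so the plan is to decide for which $n$ the square of the generalised Dehn twist $\tau$ on $T^*_{\leq 1}S^n$ represents the trivial element of $\pi_0\Diff_c(T^*_{\leq 1}S^n)$. First put $\tau$ in the standard model, supported in a collar $ST^*S^n\times[0,1]$ of the boundary and given there by wrapping the Reeb ($=$ geodesic) $S^1$--flow half a period composed with the antipodal map; then $\tau^2$ is supported in that collar and wraps the Reeb flow one full period. In the notation of the Introduction $\open(T^*_{\leq 1}S^n,\tau^2)\cong\Sigma(2,\ldots,2)$, which is diffeomorphic to the unit cotangent bundle $ST^*S^{n+1}$, an $S^n$--bundle over $S^{n+1}$, whereas $\open(T^*_{\leq 1}S^n,\id)=\partial(T^*_{\leq 1}S^n\times D^2)$.

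Suppose $n\geq 2$ with $n\notin\{2,6\}$ and, for contradiction, that $\tau^2$ is smoothly isotopic to the identity relative to $\partial$. Then the two open books just described are diffeomorphic and, more precisely, the Reeb loop cutting out $\tau^2$ must extend over the page to a loop of diffeomorphisms of $T^*_{\leq 1}S^n$ that is the identity near $\partial(T^*_{\leq 1}S^n)$. The obstruction to such an extension is a clutching/framing class which, by a Gromoll--Meyer style analysis of $ST^*S^{n+1}=\Sigma(2,\ldots,2)$ as a sphere bundle over $S^{n+1}$, is detected by the class of $TS^{n+1}$, i.e.\ by whether $S^{n+1}$ is parallelisable. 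By Bott--Milnor together with Adams's solution of the Hopf invariant one problem, $S^{n+1}$ is parallelisable only for $n+1\in\{1,3,7\}$; this contradicts $n\notin\{0,2,6\}$. Hence $\tau^2=\tau_f$ is not smoothly isotopic to the identity relative to $\partial$ when $n\notin\{2,6\}$. (Alternatively, this half can be obtained entirely within the open book formalism of this paper, by computing enough of the smooth topology of $ST^*S^{n+1}$ and of $\partial(T^*_{\leq 1}S^n\times D^2)$ to rule out a page-respecting diffeomorphism between them.)

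The main obstacle is the exceptional case: one must exhibit an explicit isotopy to the identity when $n\in\{2,6\}$. Here $n+1\in\{3,7\}$, so $S^{n+1}$ is parallelisable and carries an $H$-space structure (unit quaternions, resp.\ octonions). Using this multiplication to trivialise the normal framings involved, one deforms the Reeb loop defining $\tau^2$, relative to $\partial(T^*_{\leq 1}S^n)$, to a loop of diffeomorphisms that bounds over the page; concatenation then produces a smooth isotopy from $\tau^2=\tau_f$ to the identity relative to the boundary. Equivalently, in these two dimensions one checks directly that $ST^*S^{n+1}\cong S^{n+1}\times S^n$ is diffeomorphic to $\partial(T^*_{\leq 1}S^n\times D^2)$ by a page-preserving diffeomorphism, reversing the argument above. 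For the applications in this paper only the non-triviality half of the statement is needed.
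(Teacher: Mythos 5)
The paper does not prove this proposition at all: the statement is simply imported from Avdek, being prefaced by ``From \cite[Theorem 1.21]{Avdek:Liouville} we have''. So your opening move --- quoting that theorem --- already reproduces the paper's entire argument, and for the paper's purposes (only the non-triviality half is ever used) that citation is the proof.

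Your appended sketch, however, should not be mistaken for a self-contained argument. The sound part is the reduction you state first: if $\tau_f\simeq\tau^2$ were smoothly trivial rel boundary, then $\open(T^*_{\leq 1}S^n,\tau^2)\cong\Sigma(2,\ldots,2)\cong ST^*S^{n+1}$ would be diffeomorphic to $\open(T^*_{\leq 1}S^n,\id)\cong S^n\times S^{n+1}$. The genuinely hard step is what comes next: showing that the unit tangent bundle of $S^{n+1}$ is not diffeomorphic to the product unless $S^{n+1}$ is parallelizable. For $n+1$ even this follows from the Gysin sequence (the Euler class is $2$, so $H^{n+1}(ST^*S^{n+1})\cong\Z/2$), but for $n+1$ odd the cohomology rings agree and one needs a James--Whitehead type theorem on tangent sphere bundles of spheres, which is where Adams enters; your phrase ``a clutching/framing class \ldots detected by the class of $TS^{n+1}$'' asserts exactly this and so begs the question, since a priori a nontrivial vector bundle could have sphere bundle with total space diffeomorphic to a product. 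Moreover the clause ``the Reeb loop must extend over the page to a loop of diffeomorphisms that is the identity near $\partial$'' is incoherent as written: a loop that is the identity near the boundary cannot restrict to the Reeb rotation there; the correct statement is that triviality of the fibered twist forces the boundary loop to lift to $\pi_1$ of the group of diffeomorphisms of the page that are allowed to move the boundary. Finally, for $n=2,6$ you only gesture at the construction, and the claimed ``equivalent'' reformulation --- exhibiting a diffeomorphism $ST^*S^{n+1}\cong S^n\times S^{n+1}$ and upgrading it to a page-preserving one --- is not an equivalence without further work: producing a fibered diffeomorphism is essentially as hard as producing the isotopy itself. None of this puts you at odds with the paper, which, like you, ultimately rests entirely on \cite{Avdek:Liouville}.
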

In particular, we see that fibered Dehn twists are often not smoothly isotopic to the identity relative to the boundary.
We give another example to describe another method to see that fibered Dehn twists are not smoothly isotopic to the identity.

\subsubsection{``Roots'' of fibered Dehn twists via coverings}
Here is a sample statement that can be obtained via coverings.
\begin{proposition}
Let $W_d:=\C P^n-\nu(H^{n-1}_d)$ be the complement of a neighborhood of a smooth hypersurface of degree $d>1$ in $\C P^n$.
Then a fibered Dehn twist in $W_d$ is not smoothly isotopic to the identity relative to the boundary. 
\end{proposition}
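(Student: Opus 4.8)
The plan is to pass to the $d$-fold universal cover of $W_d$, where $\tau^d$ becomes a \emph{genuine} fibered Dehn twist on a Milnor fibre, and then to use the known description and smooth classification of the resulting Brieskorn manifolds (as recalled in the introduction, following \cite{vanKoert:brieskorn_openbook}), supplemented by Avdek's result when $d=2$.

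First I would set up the cover. Since $H_d^{n-1}$ is Poincar\'e dual to $d$ times a primitive class, $\C P^n-H_d^{n-1}$ has $\pi_1\cong\Z_d$, and its universal cover is the unbranched part of the $d$-fold cyclic cover of $\C P^n$ branched along $H_d^{n-1}$. That branched cover is a smooth degree-$d$ hypersurface $V_d^n\subset\C P^{n+1}$ (for the Fermat hypersurface, literally $\{w^d=z_0^d+\cdots+z_n^d\}$; the general smooth case follows since all smooth degree-$d$ hypersurfaces are symplectomorphic), and deleting the branch divisor $H'=V_d^n\cap\{w=0\}$, a degree-$1$ adapted Donaldson hypersurface, identifies the universal cover as $\widetilde{W_d}=V_d^n-\nu(H')$, i.e.\ the affine hypersurface $\{z_0^d+\cdots+z_n^d=1\}\subset\C^{n+1}$, the Milnor fibre of $p(z)=z_0^d+\cdots+z_n^d$, with its Weinstein structure; its contact boundary is the Brieskorn manifold $\Sigma(\underbrace{d,\dots,d}_{n+1})$. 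Thus $\widetilde{W_d}$ again fits the set-up of Section~\ref{sec:symplectic construct}.

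Next I would track the monodromy through the cover. On the collar $P\times[0,1]$ of $\partial W_d$, where $P$ is the circle bundle over $H_d^{n-1}$ of Euler number $d$, the cover restricts to the fibrewise $d$-fold cover $\widetilde P\to P$ (given on fibres by $z\mapsto z^d$), with $\widetilde P=\partial\widetilde{W_d}$ the circle bundle of Euler number $1$; consequently the periodic Reeb action on $\widetilde P$ by an angle $a$ covers the one on $P$ by $da$. A short computation then shows that the lift of the twisting profile $f$ (which runs from $2\pi$ to $0$) is $f/d$ up to an additive constant: the lift of $\tau$ itself fails to be the identity at the inner end of the collar, so it does \emph{not} extend over $\widetilde{W_d}$ --- it is only a ``$d$-th root'' of a fibered Dehn twist, defined on the collar --- whereas the lift of $\tau^d$ has profile running from $2\pi$ to $0$ and hence extends by the identity to a genuine right-handed fibered Dehn twist $\widehat\tau$ on $\widetilde{W_d}$. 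Now if $\tau$ were smoothly isotopic to the identity relative to $\partial W_d$, then so would $\tau^d$ be; and since every time-slice of such an isotopy is the identity near $\partial W_d$, it acts trivially on $\pi_1$ and lifts (choosing at each time the lift that is the identity near $\partial\widetilde{W_d}$) to a smooth isotopy rel $\partial\widetilde{W_d}$ from $\widehat\tau$ to $\id$. So it suffices to prove that a fibered Dehn twist on $\widetilde{W_d}$ is not smoothly isotopic to the identity relative to the boundary.

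For that last step, the construction of Section~\ref{sec:abs open book} applied to $\widetilde{W_d}$, together with the computation of \cite{vanKoert:brieskorn_openbook} (the one quoted in the introduction for $T^*S^n$), identifies $\OB(\widetilde{W_d},\widehat\tau^N)$ with the Brieskorn manifold $\Sigma(N,\underbrace{d,\dots,d}_{n+1})$. If $\widehat\tau$ were smoothly isotopic to the identity rel $\partial$, all of these would be diffeomorphic to the single manifold $\OB(\widetilde{W_d},\id)=\partial(\widetilde{W_d}\times D^2)$, which contradicts the fact that the diffeomorphism type of $\Sigma(N,d,\dots,d)$ genuinely depends on $N$ (the middle-dimensional homology grows without bound when these are not homotopy spheres, and the class in $\Theta_{2n-1}$ varies when they are; cf.\ Brieskorn, and the $d=2$, $n$ odd statement recalled in the introduction). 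When $d=2$ one can instead finish directly, since $\widetilde{W_2}=T^*S^n$ and a fibered Dehn twist there is not smoothly isotopic to the identity rel $\partial$ except for $n=2,6$ by \cite{Avdek:Liouville}. I expect the last step to be the main obstacle: one must make the identification $\OB(\widetilde{W_d},\widehat\tau^N)\cong\Sigma(N,d,\dots,d)$ precise and extract from the Brieskorn literature a smooth invariant distinguishing infinitely many of them, and one must decide how to handle the low-dimensional exceptions (in particular $d=2$, $n\in\{2,6\}$, where the covering argument alone reduces to a twist on $T^*S^n$ that \emph{is} smoothly trivial).
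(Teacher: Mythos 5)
Your first half (identifying the universal cover $\widetilde{W_d}$ with the affine Fermat hypersurface $V_d$, i.e.\ the Milnor fibre of $\sum z_i^d$, and lifting an isotopy rel boundary to the cover) is exactly the paper's strategy, and your description of the lifts is correct. The gap is in what you do next: by passing from $\tau$ to $\tau^d$ you throw away precisely the information the paper uses. The identity-near-boundary lift $\tilde\tau$ of $\tau$ itself is a damped Milnor monodromy (multiplication by a $d$-th root of unity on the interior of $V_d$), and $\widehat\tau=\tilde\tau^{\,d}$; the paper gets its contradiction from $\tilde\tau$, which for $d>2$ acts non-trivially on $H_n(V_d)$ (via the basis of \cite{HM:exotic}) and for $d=2$ is the classical Dehn twist on $T^*S^n$, not smoothly trivial rel boundary in any dimension. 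Your reduction, by contrast, only asks whether the genuine fibered Dehn twist $\widehat\tau$ on $\widetilde{W_d}$ is smoothly non-trivial rel boundary, and this is \emph{false} when $d=2$ and $n\in\{2,6\}$: by \cite{Avdek:Liouville}, quoted in the paper, the fibered Dehn twist on $T^*_{\leq 1}S^2$ and $T^*_{\leq 1}S^6$ is smoothly isotopic to the identity rel boundary, so all your manifolds $\OB(\widetilde{W_2},\widehat\tau^N)$ really are diffeomorphic and no contradiction can be extracted along your lines, even though the proposition covers these cases. You flag this exception but do not resolve it; resolving it forces you back to the lift of $\tau$ itself, i.e.\ to the paper's argument.

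Even for $d>2$ the step you call ``the main obstacle'' is a genuine gap rather than a routine verification. First, since $\widehat\tau$ is the $d$-th power of the rel-boundary Milnor monodromy, the expected identification is $\OB(\widetilde{W_d},\widehat\tau^N)\cong\Sigma(dN,d,\dots,d)$, not $\Sigma(N,d,\dots,d)$, and neither this paper nor \cite{vanKoert:brieskorn_openbook} (which treats the quadric case) proves it; it would need its own argument. Second, the invariant you propose does not work for this family: in the Pham--Brieskorn description, the rank of the middle homology of $\Sigma(m,d,\dots,d)$ counts tuples $(j_1,\dots,j_{n+1})\in\{1,\dots,d-1\}^{n+1}$ for which at most one admissible $j_0$ exists, so it is bounded by $(d-1)^{n+1}$ independently of $m$ --- the Betti numbers do not grow with $N$, and distinguishing infinitely many of these links would require torsion, linking forms, or Brieskorn's smooth classification, none of which you supply. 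The paper's route avoids all of this machinery: the non-trivial homology action of $\tilde\tau$ settles $d>2$ immediately, and the Dehn twist fact settles $d=2$.
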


\begin{proof}
We start by giving another description of $W_d$.
We may assume that the smooth hypersurface of degree $d>1$ in $\C P^n$ is in standard form,
$$
H_d^{n-1}=\{ [z_0:\ldots:z_{n}]~|~\sum_i z_i^d=0 \}
.
$$
We claim that $W_d\cong V_d/\Z_d$.
Here $V_d$ is a smooth, affine variety given by
$$
V_d=\{ (z_0,\ldots,z_{n})~|~\sum_i z_i^d=1 \}
.
$$
Furthermore, we have an action of $\Z_d$ via multiplication of all coordinates with $\zeta_d$, a $d-th$ root of unity.
To see that this holds, consider the map
\[
\begin{split}
\phi: V_d/\Z_d & \longrightarrow \C P^n-H_d^{n-1} \\
[(z_0,\ldots,z_n)]_d & \longmapsto [z_0:\ldots:z_n]
\end{split}
\]
Here $[(z_0,\ldots,z_n)]_d$ denotes the equivalence class of $(z_0,\ldots,z_n)$ in $V_d/\Z_d$.

\noindent
{\bf Claim: }$\phi$ is a diffeomorphism, and in fact a biholomorphism.
It is not difficult to check, but we omit the proof here.

Now consider the contact open book $\OB(W_d,\tau_{f,W_d})$
$$
\OB(W_d,\tau_{f,W_d})\cong (L(d)=S^{2n+1}/\Z_d,\xi_0).
$$
Here the contact structure on the lens space $(L(d),\xi_0)$ is obtained by taking the quotient of $(S^{2n+1},\xi_0)$ under action by multiplication with roots of unity in each coordinate.
By taking the $d$-fold cover of the open book, we obtain a contact open book for $(S^{2n+1},\xi_0)$.
With our earlier identification $W_d\cong V_d/\Z_d$, we find 
$$
(S^{2n+1},\xi_0)=\OB(\widetilde{W_d},\tilde \tau_{f,W_d})=\OB(V_d,\tilde \tau_{f,W_d}).
$$
The cover of the monodromy $\tau_{f,W_d}$ is a map that is the identity on the boundary, and multiplication by a $d$-th root of unity in the interior. In a neighborhood of the boundary, an interpolation similar to a fibered Dehn twist occurs, with the angle going from $2\pi/d$ to $0$ instead.

Suppose now that a fibered Dehn twist $\tau_{f,W_d}$ on $W_d$ is smoothly isotopic to the identity relative to the boundary.
Then the isotopy can be lifted to its cover. Since $\tau_{f,W_d}=\id$ near the boundary of $W_d$, this remains true on the cover. It follows that the lifted monodromy, $\tilde \tau_{f,W_d}$, is smoothly isotopic to the identity near the boundary.

To obtain a contradiction, we consider two cases.
For $d=2$, we observe that $\tilde \tau_{f,W_2}$ is a standard right-handed Dehn twist on $T^*S^n$.
It is well-known that a standard right-handed Dehn twist on $T^*S^n$ is not smoothly isotopic to the identity relative to the boundary. This is a contradiction.
For $d>2$, we claim that $\tilde \tau_{f,W_d}$ acts non-trivially on homology. An easy way to see this, is to use the basis of homology given by \cite[Chapter 12]{HM:exotic}: multiplication by a $d$-th root of unity acts obviously non-trivially on this basis.
So we get a contradiction in this case as well and we conclude that $\tau_{f,W_d}$ is not smoothly isotopic to the identity relative to the boundary if $d>1$.
\end{proof}

In principle, this method can be applied in other situations as well,
such as Weinstein manifolds that are formed as the complement in an
integral symplectic manifold of an adapted Donaldson hypersurface.

%%\bibliographystyle{amsplain}
%%\bibliography{../biblio}
%%\bibliography{biblio}

\end{document}